\renewenvironment{proof}[1][\proofname ]{{\noindent \bfseries #1. }}{\qed \bigskip }
\newcommand{\R}{{\mathbb R}}
\newcommand{\Z}{{\mathbb Z}}
\newcommand{\N}{{\mathbb N}}
\newcommand{\Q}{{\mathbb Q}}
\newcommand{\T}{{\mathbb T}}
\newcommand{\Ss}{{\mathbb S}}
\newcommand{\OO}{{\mathrm O}}
\newcommand{\e}{\varepsilon}
\newcommand{\Div}{\mathrm{div}}
\newcommand{\yy}{{ \widetilde y}}
\newcommand{\zz}{{ \widetilde z}}
\newtheorem{theorem}{Theorem}[section]
\newtheorem{claim}[theorem]{Claim}
\newtheorem{cor}[theorem]{Corollary}
\newtheorem{example}[theorem]{Example}
\newtheorem{lem}[theorem]{Lemma}
\newtheorem{remark}[theorem]{Remark}
\newtheoremstyle{named}{}{}{\itshape}{}{\bfseries}{.}{.5em}{\thmnote{#3 }#1}
\theoremstyle{named}
\numberwithin{equation}{section}
\title[Regularity of the homogenized boundary data]{Regularity of boundary data in periodic homogenization of elliptic systems in layered media}
\keywords{Periodic homogenization, Dirichlet problem, elliptic systems, boundary layers, regularity,
Green's kernel}
\author{Hayk Aleksanyan}
\address{School of Mathematics, The University of Edinburgh, JCMB The King's Buildings, Peter Guthrie Tait Road, Edinburgh EH9 3FD}
\curraddr{Department of Mathematics, KTH Royal Institute of Technology, SE-100 44  Stockholm,
Sweden}
\email{hayk.aleksanyan@gmail.com}
\begin{document}

\begin{abstract}
In this note we study periodic homogenization of Dirichlet problem for divergence type elliptic systems
when both the coefficients and the boundary data are oscillating.
One of the key difficulties here is the determination of the fixed boundary data corresponding to the limiting (homogenized) problem.
This issue has been addressed in recent papers by D. G\'{e}rard-Varet and N. Masmoudi \cite{GM1},
and by C. Prange \cite{Prange}, however, not much is known about the regularity of this fixed data.
The main objective of this note is to initiate a study of this problem,
and to prove several regularity results in this connection.


    \end{abstract}


\maketitle

\section{Introduction}\label{sec-Intro}
For a bounded domain $D\subset \R^d$ ($d\geq 2$) consider the following problem
\begin{equation}\label{problem-osc-oper}
 -\nabla \cdot \left(A \left( \frac{ \cdot }{\e} \right) \nabla u \right)(x) =0 , \qquad x\in D,
\end{equation}
with oscillating Dirichlet data
\begin{equation}\label{prob-Dir-data-osc}
 u(x)=g \left(x, \frac{x}{\e} \right), \qquad x \in \partial D.
\end{equation}

\noindent Here $\e>0$ is a small parameter,
$A(x)=(A_{ij}^{ \alpha \beta }(x))$ is $\R^{N^2 \times d^2}$-valued
function defined on $\R^d$, where $1\leq \alpha, \beta \leq d$, $1\leq i,j\leq N$, and
the boundary data $g(x,y)$ is $\R^N$-valued function
defined on $\partial D \times \R^d$.
The action of the operator in (\ref{problem-osc-oper}) on a vector-function $u=(u_1,...,u_N)$ is defined as
$$
-(\mathcal{L}_\e u)_i(x):=\left[ \nabla \cdot \left(A \left(\frac{\cdot}{\e} \right) \nabla u \right) \right]_i (x) =
 \frac{\partial}{\partial x_{\alpha}} \left[ A^{\alpha \beta }_{ij } \left( \frac{\cdot }{\e} \right)
\frac{\partial u_j}{\partial x_{\beta}}   \right] (x),
$$
where $1\leq i \leq N$. Here and throughout the text,
if not stated otherwise, we use the summation convention for repeated indices. 

\vspace{0.1cm}

\noindent \textbf{Assumptions.} Here we collect all assumptions
which will be used when studying problem (\ref{problem-osc-oper})-(\ref{prob-Dir-data-osc}).

\begin{itemize}
\item[(A1)]  (Periodicity) The coefficient tensor $A$
 and the boundary data $g$ in its second (oscillating) variable are $\Z^d$-periodic, that is $\forall y\in \R^d, \ \forall h\in \Z^d$
 and $\forall x \in \partial D$ one has
$$
A(y+h)=A(y), \  g(x,y+h)=g(x,y).
$$

\item[(A2)] (Ellipticity) Coefficients are uniformly elliptic and bounded, that is there exist constants $\Lambda, \lambda>0$ such that
$$
  \lambda \xi_{\alpha}^i \xi_{\alpha}^i \leq A_{ij}^{\alpha \beta}(x) \xi_{\alpha}^i \xi_{\beta}^j \leq \Lambda \xi_{\alpha}^i
\xi_{\alpha}^i, \qquad \forall x\in \R^d, \ \forall \xi \in \R^{d\times N}.
$$

\item[(A3)] (Smoothness) We suppose that the boundary data $g$ in both variables,
 all elements of $A$, and the boundary of $D$ are infinitely smooth.

\item[(A4)] (Geometry of the domain) $D$ is a strictly convex domain, i.e. the all principal curvatures of $\partial D$ are bounded away from zero.

\item[(A5)] (Layered medium structure) We assume that the coefficient tensor $A$ is independent
of some fixed rational direction, i.e. there exists a non-zero vector $\nu_0 \in \Z^d$
such that $(\nu_0 \cdot \nabla) A (y) =0$ for all $y \in \mathbb{T}^d$.
\end{itemize}

\vspace{0.1cm}

The last hypothesis (A5) models 
media with layered structure, for instance,
(A5) includes the class of first order \emph{laminates}.
Although homogenization results concerning laminates have been studied in theory,
and have independent interest (see e.g. \cite{Neuss}), here the assumption (A5)
is technical and is due to our proof. 

For each $\e>0$ let $u_\e$ be the solution to problem (\ref{problem-osc-oper})-(\ref{prob-Dir-data-osc}).
Also, for the family of operators $\{ \mathcal{L}_\e \}_{\e>0}$
let $\mathcal{L}_0$ be the homogenized (effective) operator in a
usual sense of the theory of homogenization (see e.g. \cite{BLP}).
The following homogenization result for $u_\e$ is due to D. G\'{e}rard-Varet, and N. Masmoudi.

\begin{theorem}\label{Thm-GM-main}{\normalfont(see \cite[Theorem 1.1]{GM1})}
Under assumptions (A1)-(A4) there exists a fixed boundary data\footnote{This theorem is formulated in \cite{GM1}
with $g^* \in L^p(\partial D)$ for all finite $p$. However \cite{GM1} contains a proof of the stronger statement $g^* \in L^\infty(\partial D)$,
which we use in the current formulation (in \cite{GM1} see Proposition 2.4, and the discussion at the end of page 159).  }
 $g^* \in L^\infty(\partial D)$ such that if
$u_0$ solves 
$$
\mathcal{L}_0 u_0 (x) = 0 , \ x \in D \qquad{ and } \qquad u_0(x)= g^*(x), \ x \in \partial D,
$$
then
$$
|| u_\e - u_0 ||_{L^2(D)} \leq C_\alpha \e^\alpha, \qquad \forall \alpha \in \left(0, \frac{d-1}{3d+5} \right).
$$
\end{theorem}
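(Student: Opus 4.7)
My plan is to combine the classical two-scale asymptotic expansion with boundary-layer correctors, which is the standard architecture for periodic homogenization with oscillating Dirichlet data. The interior part of the expansion, $u_\varepsilon(x) \approx u_0(x) + \varepsilon\,\chi(x/\varepsilon)\nabla u_0(x)$, is dictated by the cell problem for $A$ and produces the homogenized operator $\mathcal{L}_0$; the difficulty lies entirely on $\partial D$, where this ansatz does not match the oscillating trace $g(x, x/\varepsilon)$. The job is therefore (i) to construct a corrector that interpolates between that oscillating trace and a well-defined asymptotic mean, and (ii) to identify this mean with the sought $g^*$.

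Concretely, at each $x_0 \in \partial D$ with outward unit normal $n = n(x_0)$, I would freeze the macroscopic variable and consider the half-space boundary-layer system
\begin{equation*}
-\nabla_y \cdot \bigl(A(y)\nabla_y v\bigr) = 0 \text{ in } \{y\cdot n<0\}, \qquad v(y) = g(x_0, y) \text{ on } \{y\cdot n=0\}.
\end{equation*}
The crux is to show, for a full-measure Diophantine set of directions $n$, that $v(y)$ stabilises as $y\cdot n \to -\infty$ to a constant $g^*(x_0) \in \R^N$, with a quantitative rate. Following \cite{GM1}, one expands the Dirichlet data in Fourier modes along $\Z^d$, propagates each mode inward by the Poisson kernel of the half-space system, and controls the resulting oscillatory integrals via small-divisor estimates of the form $|\xi - (\xi\cdot n)n| \gtrsim |\xi|^{-\tau}$ for integer $\xi \neq 0$. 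Strict convexity of $D$ (assumption (A4)) then guarantees, through non-degeneracy of the Gauss map $\partial D \to S^{d-1}$, that the exceptional set of $x_0$ with resonant normal is negligible and that the Diophantine loss is controlled uniformly in terms of surface distance on $\partial D$.

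With $g^*$ in hand, I would construct a global approximate solution $\widetilde u_\varepsilon$ by patching the interior expansion to the local boundary-layer correctors through a partition of unity adapted to a covering of $\partial D$ by pieces of diameter $\varepsilon^\kappa$, with $\kappa \in (0,1)$ to be optimised. The residual $\mathcal{L}_\varepsilon(u_\varepsilon - \widetilde u_\varepsilon)$ then splits into commutator terms from the partition, tails of the boundary-layer correctors, and errors from freezing $n$ and $x_0$ on each patch, while the boundary trace of $u_\varepsilon - \widetilde u_\varepsilon$ carries the defect between $g(x, x/\varepsilon)$ and the assembled trace of the corrector. The $L^2$ estimate now comes from a duality argument against a dual problem with $L^2$ source, where the singular boundary residual is absorbed into weighted integrals of the Green's kernel of $\mathcal{L}_0$; the exponent $\alpha < (d-1)/(3d+5)$ emerges by optimising the patch size $\varepsilon^\kappa$ against the Diophantine loss exponent $\tau$, the boundary-layer thickness within which $v$ has not yet equilibrated to $g^*$, and the size of the bad set of non-Diophantine normals on $\partial D$.

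The hardest step, and the technical heart of \cite{GM1}, is obtaining the quantitative ergodic convergence $v(\cdot) \to g^*(x_0)$ \emph{uniformly} over a Diophantine class of normals, together with an explicit decay rate compatible with the duality argument. Without strict convexity the boundary could contain flat pieces with rational normal, on which no such averaging holds; without the smoothness of $g$ and $\partial D$ in (A3) the Fourier tails would not decay fast enough to beat the small divisors; and without ellipticity (A2) the half-space Poisson kernel estimates underlying the propagation step would fail. The somewhat unusual exponent $(d-1)/(3d+5)$ reflects the simultaneous balance of all these inputs, and improving it requires sharpening the boundary-layer convergence rate rather than any interior ingredient.
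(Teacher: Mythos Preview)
The paper does not contain a proof of this theorem. Theorem~\ref{Thm-GM-main} is quoted verbatim from G\'erard-Varet and Masmoudi \cite{GM1} and is used only as background; the present paper's contribution begins with Theorem~\ref{thm-Main}, which concerns the regularity of the boundary data $g^*$ produced by Theorem~\ref{Thm-GM-main}. So there is no ``paper's own proof'' to compare against.

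That said, your outline is a faithful high-level summary of the architecture of the proof in \cite{GM1}: the interior two-scale ansatz, the half-space boundary-layer system at each boundary point, the Diophantine small-divisor condition on the normals combined with strict convexity via the Gauss map, the patching by a partition of unity at scale $\varepsilon^\kappa$, and the final duality/energy argument with optimisation over $\kappa$ producing the exponent $(d-1)/(3d+5)$. One cosmetic discrepancy: you use the outward normal and the half-space $\{y\cdot n<0\}$, whereas both \cite{GM1} and the present paper work with the inward normal and $\Omega_n=\{y\cdot n>0\}$; this is only a sign convention. If your intent was to sketch \cite{GM1}, the plan is correct; if your intent was to reproduce something from the present paper, there is nothing here to reproduce for this particular statement.
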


A result related to Theorem \ref{Thm-GM-main} was proved
in our recent work \cite{ASS2} in collaboration with H. Shahgholian, and P. Sj\"{o}lin, by an approach different than that of \cite{GM1}.
Define projections $P^k_\gamma(x) = x_\gamma (0,...,1,0,...) \in \R^N$
with 1 in the $k$-th position, where $1\leq \gamma \leq d$ and $1\leq k \leq N$. Also, let
$\mathcal{L}^*_\e$ be the adjoint operator to $\mathcal{L}_\e$, that is the coefficients of $\mathcal{L}^*_\e$
are set as $(A^*)^{\alpha \beta}_{ij} = A^{\beta \alpha}_{ji}$. We then have the following result.

\begin{theorem}\label{Thm-our2}{\normalfont(see \cite[Theorem 1.7]{ASS2})}
In the same setting as in Theorem \ref{Thm-GM-main}, assume in addition that $d\geq 3$
and $\mathcal{L}_\e^* (P_\gamma^k)=0$
in $D$ for all $1\leq k \leq N$, $1\leq \gamma \leq d$, and any $\e>0$. Then
there exists a function $g^*$ infinitely smooth on $\partial D$, so that
if $u_\e$ is the solution to (\ref{problem-osc-oper})-(\ref{prob-Dir-data-osc})
and $u_0$ of that with homogenized operator $\mathcal{L}_0$ and boundary data $g^*$ then
$$
|| u_\e - u_0||_{L^p(D)} \leq C_p [\e (\ln(1/\e))^2]^{1/p},
$$
for any $1\leq p<\infty$. Moreover, $g^*$ may be represented explicitly in
terms of the vector field of normals of $\partial D$, boundary data $g$, the coefficient tensor $A$ and coefficients
of the operator $\mathcal{L}_0$.
\end{theorem}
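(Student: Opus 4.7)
My plan is to start from the Poisson-kernel representation
\begin{equation*}
u_\e(x) = \int_{\partial D} P_\e(x,y)\, g(y, y/\e) \, dS(y),
\end{equation*}
where $P_\e(x,\cdot)$ denotes the Poisson kernel for $\mathcal{L}_\e$ on $D$. Since $g(y,z)$ is smooth and $\Z^d$-periodic in $z$, I expand it in a Fourier series
\begin{equation*}
g(y, z) = \sum_{\xi \in \Z^d} c_\xi(y)\, e^{2\pi i \xi \cdot z},
\end{equation*}
with $c_\xi$ smooth on $\partial D$ and rapidly decaying in $\xi$. After swapping sum and integral, every non-zero frequency contributes an oscillatory integral over $\partial D$ with large parameter $1/\e$ and phase $y \mapsto \xi \cdot y$, and the zero mode contributes the bulk term $\int_{\partial D} P_\e(x,y)\, c_0(y)\, dS(y)$.

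For each $\xi \neq 0$ the strict convexity of $\partial D$ guarantees that $y \mapsto \xi \cdot y$ has exactly two non-degenerate critical points on $\partial D$, namely the two tangency points whose outward unit normal is parallel to $\pm\xi/|\xi|$. A stationary-phase analysis localizes the oscillatory integral near those points. At each tangency point I introduce the half-space boundary-layer corrector associated with $\mathcal{L}_\e$ and the outward normal there, and use it to identify the limit contribution. Summing over $\xi$ with the rapid decay of $c_\xi$ yields an explicit candidate $g^*(x)$ written as a series indexed by $\xi \in \Z^d \setminus \{0\}$, each term being a boundary-layer value at the tangency normals computed from $g$, $A$ and the coefficients of $\mathcal{L}_0$, plus the bulk piece corresponding to the mean $c_0$ of $g$ in $z$.

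The adjoint hypothesis $\mathcal{L}_\e^*(P^k_\gamma) = 0$ is what unlocks the sharp rate: a direct computation shows it is equivalent to the divergence-free condition $\partial_{y_\alpha} A^{\gamma\alpha}_{ki} = 0$ on the coefficient tensor. This identity simplifies the two-scale expansion substantially (the homogenized coefficients become the pointwise means of $A$, and the adjoint correctors drop out), and provides the uniform-in-$\e$ estimates on $P_\e$ needed to substitute it by the homogenized kernel $P_0$ with a quantitatively controlled error. Integrating the pointwise error in $x$ then produces the claimed $L^p$ bound, with the logarithmic factor $(\ln(1/\e))^2$ arising from the mild boundary-layer blow-up of $\nabla P_0(x,\cdot)$ and from the worst-case behaviour of the summation in $\xi$.

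The main obstacle I expect is uniform control of the half-space boundary-layer correctors as the normal direction varies continuously over $\mathbb{S}^{d-1}$, together with the summability of their contributions in $\xi$; this is where the smoothness of $g$, the hypothesis $d \geq 3$, and the adjoint condition interact to guarantee a convergent explicit series for $g^*$ and, consequently, its smoothness on $\partial D$. A secondary, more technical difficulty is the stationary-phase analysis with the singular kernel $P_\e(x,\cdot)$: one has to track how the singularity of $P_\e$ at $y = x$ interacts with the oscillation in order to convert asymptotics into quantitative error estimates integrable in $L^p(D)$.
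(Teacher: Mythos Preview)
The paper does not prove Theorem~\ref{Thm-our2}; it is quoted from \cite{ASS2} as background. What the paper does supply, in the concluding remarks of Section~\ref{sec-concluding}, is an alternative argument for the \emph{smoothness of $g^*$} portion only, and that argument is structurally different from your sketch.

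Your outline --- Poisson representation of $u_\e$, Fourier expansion of $g$ in the fast variable, stationary-phase analysis on $\partial D$ for each nonzero frequency --- is essentially the route of \cite{ASS2}, and it is the natural way to obtain both the explicit formula for $g^*$ and the quantitative rate $\|u_\e-u_0\|_{L^p}\lesssim [\e(\ln(1/\e))^2]^{1/p}$. The present paper's alternative for the regularity claim bypasses all of this machinery: it starts from Prange's formula (\ref{form-of-v-infty}) for the boundary-layer tail $v^\infty(n)$, observes that the divergence-free condition (\ref{div-free-condition}) forces the cell-problem solutions $\chi^{*,\gamma}$ --- and hence the boundary-layer correctors $v^{*,\gamma}$ --- to vanish identically, so that the only surviving terms in (\ref{form-of-v-infty}) are manifestly $C^\infty$ in $n$ by Lemma~\ref{Lem-Green-reg-systems} together with (\ref{avg-stability-1}). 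Feeding this into (\ref{g-as-a-series}) gives smoothness of $g^*$ in every dimension $d\geq 2$, which actually improves on \cite{ASS2} where $d\geq 3$ was needed; on the other hand it says nothing about the convergence rate.

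In short: your proposal is a reasonable high-level plan for the full theorem as proved in \cite{ASS2}, and the difficulties you flag (uniform control of correctors in the normal, interaction of the Poisson singularity with the oscillation) are the genuine ones there. The present paper only re-derives the regularity of $g^*$, via a much shorter argument that exploits the explicit tail formula and the vanishing of correctors under the adjoint hypothesis, rather than oscillatory-integral asymptotics.
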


Using the periodicity condition on the coefficients $A$ one may simplify the condition
of Theorem \ref{Thm-our2} on $P_\gamma^k$-s. Namely, denote $v_{k,i}^\gamma(x): = (A_{ki}^{\gamma 1},..., A_{ki}^{\gamma d})(x)$,
for $x\in \R^d$, $1\leq k,i\leq N$, $1\leq \gamma \leq d$, then it is easy to see that the condition
$\mathcal{L}_\e^* (P_\gamma^k) \equiv 0$ is equivalent to
\begin{equation}\label{div-free-condition}
\Div (v_{k,i}^\gamma)(x) = 0, \ x\in \R^d, \ 1 \leq k,i \leq N, \ 1\leq \gamma \leq d.
\end{equation}
In the case of $N=1$ (scalar equations) the last condition means that the rows of the matrix $A$
considered as vector fields in $\R^d$ must be divergence free. The result concerning regularity of $g^*$
contained in Theorem \ref{Thm-our2},
although restrictive in terms of the structure of the operator $\mathcal{L}_\e$,
shows that in some cases one may have smooth boundary data for the homogenized problem.
Looking ahead let us remark here, that among other things we will recover this result for $g^*$ (see subsection \ref{sec-concluding})
by a different method which will show the smoothness of $g^*$ under conditions of Theorem \ref{Thm-our2}
in dimension two as well.

Departing from here, we aim at understanding the regularity of the fixed boundary data $g^*$
defined by Theorem \ref{Thm-GM-main}. Let us first recall some known facts from \cite{GM1} concerning $g^*$.
For a unit vector $n\in \Ss^{d-1}$ let
$P_{n^\bot} $ be the operator of orthogonal projection on the hyperplane orthogonal
to $n$. Fix $l>0$ so that $(d-1)l>1$ and for $\kappa>0$ set
\begin{equation}\label{Diop-class}
\mathcal{A}_\kappa=\big\{ n\in \Ss^{d-1}: | P_{n^\bot}(\xi) | \geq \kappa |\xi|^{-l} \text{ for all } \xi \in \Z^d \setminus \{0\}   \big\}.
\end{equation}
A vector $n\in \mathbb{S}^{d-1}$ is called \emph{Diophantine}, if $n\in \mathcal{A}_\kappa$ for some $\kappa>0$.
For $x\in \partial D$ let $n(x)$ be the unit inward normal at $x$,
and define $\Gamma_\kappa = \{ x\in \partial D: \ n(x) \in \mathcal{A}_\kappa \}$.
One can see from the analysis of \cite{GM1} that for any $\kappa>0$
the restriction of $g^*$ on $\Gamma_\kappa$ is Lipschitz continuous
with the Lipschitz constant bounded by $C \kappa^{-2}$, where the constant $C=C(A,D, g,d)$.
It is shown in \cite{GM1} that $\sigma (\Ss^{d-1} \setminus \mathcal{A}_\kappa)\leq C \kappa^{d-1}$,
where $\sigma$ denotes the Lebesgue measure
on the unit sphere of $\R^d$. Also, it is not hard to see that the complement $\mathcal{A}_\kappa^c = \Ss^{d-1}\setminus \mathcal{A}_\kappa$,
while a set of small measure, is everywhere dense and is an open subset of the unit sphere.
Next, due to strict convexity of $D$ and smoothness of $\partial D$, we have that 
the Gauss map of $\partial D$, namely $\partial D \ni x \longmapsto n(x)\in \mathbb{S}^{d-1}$ is a diffeomorphism,
which implies that the sets $\Gamma_\kappa$ have similar properties as $\mathcal{A}_\kappa$,
in particular, the surface measure of $\Gamma_\kappa$ decays as $\kappa\to 0$, and the complement of each $\Gamma_\kappa$
is open and dense in $\partial D$. We see that as $\kappa \to 0$, the sets $\Gamma_\kappa$
cover the entire boundary of $D$ up to measure zero, and hence $g^*$ is defined almost everywhere on $\partial D$.
However, since the upper bound for Lipschitz constant of $g^*$ on $\Gamma_\kappa$,
which is $C\kappa^{-2}$, blows up as $\kappa \to 0$, we
\emph{cannot} conclude that there exists an extension of $g^*$ to $\partial D$ which will be continuous at least
at a single point. As we will see here, the behaviour of $g^*$ is more regular for
layered structures. 

For a given domain $D$ with smooth boundary, and
$\tau  >0$ set
$$
\partial D_\tau = \{ x\in \partial D: \hspace{0.03cm} n(x) \notin \R \Q^d \text{ and }  |n(x) \cdot \nu_0 | >\tau \},
$$
where $\nu_0$ is fixed from assumption (A5). We have the following result.

\begin{theorem}\label{thm-Main}{\normalfont(The Regularity Theorem)}
Let assumptions (A1)-(A5) be in force, and let $g^*$ be defined by Theorem \ref{Thm-GM-main}. Then, for 
any $\tau>0$ there exists a constant $C_\tau= C(A, D, g, d, \tau)$ such that
$$
|g^\ast(x) - g^\ast(y)| \leq C_\tau |x-y|, \qquad \forall x,y \in \partial D_\tau.
$$ 
\end{theorem}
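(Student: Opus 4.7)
The plan is to revisit the G\'erard-Varet--Masmoudi construction of $g^*$ and exploit the layered structure (A5) to convert the Diophantine-dependent Lipschitz bound $C\kappa^{-2}$ on $\Gamma_\kappa$ into a uniform Lipschitz bound on $\partial D_\tau$. Recall that $g^*(x)$ is obtained by solving a half-space boundary layer problem with normal $n(x)$ and extracting its limit at infinity, and that the sensitivity of this limit to perturbations of $n(x)$ is controlled by lower bounds on $|P_{n^\bot}(\xi)|$ as $\xi$ runs through the Fourier spectrum of the coefficients and the data. The strategy is to show that under (A5) this spectrum is drastically reduced, and that on $\partial D_\tau$ the corresponding restricted Diophantine condition holds uniformly and with linear (rather than negative) power of $|\xi|$.

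Concretely, since $\nu_0\in\Z^d$ and $A$ is independent of the $\nu_0$-direction, $A$ descends to a function on the lower-dimensional torus transverse to $\nu_0$, and its Fourier modes are supported on the sublattice $\Lambda_0:=\Z^d\cap \nu_0^\bot$. The first step is to trace through the boundary layer analysis of \cite{GM1} and check that the modes actually generating non-decaying contributions to the boundary layer limit live in $\Lambda_0$; the $\Z^d$-periodic data $g(x,\cdot)$ brings in all of $\Z^d$, but components $\xi\in\Z^d\setminus\Lambda_0$ should be cut off by exponential decay since the layered operator does not couple them back to the slowly oscillating part. The second, elementary but decisive step, is the following linear algebra estimate: for any $n\in\Ss^{d-1}$ with $|n\cdot\nu_0|>\tau$ and any $\xi\in\Lambda_0\setminus\{0\}$, writing $n=(n\cdot\nu_0/|\nu_0|^2)\nu_0 + n'$ with $n'\bot\nu_0$ and using $n\cdot \xi = n'\cdot \xi$,
\[
|P_{n^\bot}(\xi)|^2 = |\xi|^2 - (n\cdot\xi)^2 \geq \left(1-|n'|^2\right)|\xi|^2 = \frac{(n\cdot\nu_0)^2}{|\nu_0|^2}|\xi|^2 \geq \frac{\tau^2}{|\nu_0|^2}|\xi|^2.
\]
Thus on $\partial D_\tau$ one has $|P_{n^\bot}(\xi)|\geq c_\tau|\xi|$ for every relevant $\xi$, uniformly in $x\in\partial D_\tau$ and vastly stronger than the Diophantine bound of (\ref{Diop-class}).

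Feeding this improved estimate into the GM argument that produced $C\kappa^{-2}$ replaces $\kappa$ by a constant depending only on $\tau$ and $|\nu_0|$, and yields the desired $C_\tau$. The role of the irrationality condition $n(x)\notin\R\Q^d$ is to guarantee that the boundary layer limit defining $g^*(x)$ actually exists along the reduced lattice $\Lambda_0$, exactly as irrationality plays in the classical $(d-1)$-dimensional setting. The main obstacle I anticipate is precisely the first step: carefully revisiting the Green's kernel and boundary layer estimates of \cite{GM1} in the layered setting to isolate the $\Lambda_0$-contributions, for instance by a partial Fourier or separation-of-variables decomposition along $\nu_0$, and verifying that components outside $\Lambda_0$ generate only exponentially small errors uniform in $x\in\partial D_\tau$, so that the linear lower bound above really governs the Lipschitz modulus of $g^*$.
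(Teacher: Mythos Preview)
Your core observation is correct and valuable: the Fourier spectrum of $A$ (and hence of the cell correctors $\chi^{*,\gamma}$) is contained in $\Lambda_0=\Z^d\cap\nu_0^\perp$, and for $\xi\in\Lambda_0\setminus\{0\}$ and $|n\cdot\nu_0|>\tau$ one has the uniform linear lower bound $|P_{n^\perp}(\xi)|\ge(\tau/|\nu_0|)|\xi|$. This computation appears in the paper as well (Example~\ref{example-Laplace}, for $\nu_0=e_d$), and the paper explicitly remarks in Section~\ref{sec-concluding} that an approach along exactly your lines---tailoring the Diophantine condition of \cite{GM1} to the operator---``seems plausible''. So you have identified a legitimate alternative route.

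That said, the paper does \emph{not} follow this route, and your proposal has a genuine gap at the step you yourself flag as the main obstacle. The paper's proof avoids reopening the \cite{GM1} machinery entirely. Instead it works from Prange's explicit formula (\ref{form-of-v-infty}) for the boundary layer tail $v^\infty(n)$, which splits $v^\infty(n)$ into a product of an integrated Green's kernel $\mathcal{I}^\alpha(n)$ and three averages $\mathcal{M}\{\cdots\}$. The integrated kernel is shown to be $C^\infty$ in $n$ by an explicit algebraic identity (Lemma~\ref{Lem-Green-reg-systems}); the first two averages are constants times $n_\beta$ by Lemma~\ref{lem-quasi-per-my}; and the entire $n$-dependence is pushed into the third average, which contains the boundary layer corrector $v_n^{*,\alpha}$. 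For this last term the paper exploits (A5) through a concrete change of variables (the matrix $T_n$ of (\ref{matrix-T-n})) that transforms the half-space problem into one on a fixed half-space with periodic tangential data, to which Tartar's exponential-decay theorem (Theorem~\ref{thm-Lions-bdry-layers}) applies. This yields explicit $V_\tau$-norm bounds on $\nabla(w_\nu-w_\mu)$ (Lemma~\ref{Lem-main}), and hence Lipschitz control of the Fourier coefficients of $v_n^{*,\alpha}$ in $n$, which is fed back through Corollary~\ref{cor-to-lem-quasi-per} to conclude.

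Your approach, by contrast, would require going back into the \cite{GM1} proof of the $C\kappa^{-2}$ bound and verifying that every occurrence of the Diophantine constant $\kappa$ enters only through modes $\xi\in\Lambda_0$. This is not automatic: the boundary layer problem for $v_n^{*,\alpha}$ has data $-\chi^{*,\gamma}$ with spectrum in $\Lambda_0$, but the solution $v_n^{*,\alpha}$ itself is \emph{not} $\Z^d$-periodic and does not obviously have spectrum confined to $\Lambda_0$; the paper establishes the analogous structural fact (expansion (\ref{exp-for-bdry-layer-corr})) precisely via the change of variables and Tartar's theorem, not by spectral bookkeeping. Your sentence ``components $\xi\in\Z^d\setminus\Lambda_0$ should be cut off by exponential decay since the layered operator does not couple them back'' is a hope, not an argument, and making it precise is essentially equivalent in effort to what the paper does. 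In short: your diagnosis of the mechanism is right, but the execution you sketch defers the real work, whereas the paper supplies a self-contained argument via (\ref{form-of-v-infty}) and Theorem~\ref{thm-Lions-bdry-layers} that never touches the internals of \cite{GM1}.
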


\begin{cor}\label{cor-to-main}
$g^*$ has a unique continuous extension to $\{x\in \partial D: \ n(x) \cdot \nu_0 \neq 0\}$.
\end{cor}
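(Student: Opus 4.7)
The plan is to deduce the corollary from the Regularity Theorem by a local density-and-completion argument, whose substance is that rational normal directions form a countable, hence nowhere dense, subset of the unit sphere. Fix any $x_0 \in \partial D$ with $n(x_0) \cdot \nu_0 \neq 0$ and set $\tau_0 := \frac{1}{2}|n(x_0)\cdot \nu_0|$. Since the Gauss map $x \mapsto n(x)$ is continuous, there is an open neighborhood $U \subset \partial D$ of $x_0$ on which $|n(y)\cdot \nu_0| > \tau_0$ for every $y \in U$.

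Next I would observe that $\partial D_{\tau_0} \cap U$ is dense in $U$. The points of $U$ excluded from $\partial D_{\tau_0}$ are exactly those $y$ with $n(y) \in \R \Q^d$; since $\R \Q^d \cap \Ss^{d-1}$ is countable and the Gauss map is a diffeomorphism under (A3)-(A4), the exceptional set in $U$ is countable and hence nowhere dense in the smooth manifold $U$. By the Regularity Theorem, $g^*$ is Lipschitz on $\partial D_{\tau_0} \cap U$ with constant $C_{\tau_0}$, in particular uniformly continuous on this dense subset. The standard completion argument for uniformly continuous functions from a dense subset of a metric space into a complete target then produces a unique continuous extension of $g^*|_{\partial D_{\tau_0}\cap U}$ to all of $U$; this extension is itself Lipschitz with constant $C_{\tau_0}$.

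To globalize, observe that given two such local neighborhoods $U_1, U_2$ with associated thresholds $\tau_1, \tau_2$, the two corresponding continuous extensions agree on $\partial D_{\min(\tau_1,\tau_2)} \cap U_1 \cap U_2$, where both equal the original $g^*$. Since this set is dense in $U_1 \cap U_2$ by the same argument as before, the two extensions coincide on $U_1 \cap U_2$ by continuity. The local extensions thus patch together unambiguously to a continuous function on $\{x \in \partial D: n(x)\cdot \nu_0 \neq 0\}$, and uniqueness among all continuous extensions is forced by continuity on the open dense subset $\{n(x)\notin \R \Q^d\}\cap \{n(x)\cdot \nu_0 \neq 0\}$. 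There is no real obstacle beyond the Regularity Theorem itself; the only point that deserves to be articulated is the density of $\partial D_\tau$ in its natural open stratum of $\partial D$, which reduces to the countability of rational directions in $\Ss^{d-1}$ together with the smoothness of the Gauss map.
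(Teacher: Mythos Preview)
Your proof is correct and follows essentially the same approach as the paper: both deduce the corollary from the Regularity Theorem by extending a uniformly continuous function from the dense subset $\partial D_\tau$ to the open stratum $\{|n(x)\cdot\nu_0|>\tau\}$, then letting $\tau$ vary. The paper does this globally in one line (extend on each $\{|n(x)\cdot\nu_0|>\tau\}$ and take $\tau\to 0$), whereas you localize around each point and patch; you are also more explicit about why $\partial D_\tau$ is dense (countability of rational directions plus the Gauss map being a diffeomorphism), a point the paper leaves implicit.
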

\begin{proof}
Note that by Theorem \ref{Thm-GM-main} $g^*$ is defined almost everywhere on $\partial D$ 
and we need to extend $g^*$ on a measure zero set of $\partial D$.
By Theorem \ref{thm-Main} for any $\tau>0$ the function $g^*$ is uniformly continuous on $\partial D_\tau$,
and hence admits a unique continuous extension to $\{x\in \partial D: \ |n(x)\cdot \nu_0 |>\tau  \}$.
The proof follows by taking $\tau \to 0$.
\end{proof}

The next two examples are meant to point out some scenarios when Theorem \ref{thm-Main} can be used more effectively.
\begin{example}\label{ex-1}
Under (A1)-(A4) assume in addition that the coefficient tensor $A$ is independent
of the first $k$ coordinates for some $1\leq k \leq d$,
or equivalently that (A5) is satisfied for vectors $\{e_i \}_{i=1}^k$ where $e_i\in \R^d$ is the $i$-th vector
of the standard basis of $\R^d$.
Then, for each $1\leq i \leq k$ taking $e_i$ as the vector in 
assumption (A5), and applying Theorem \ref{thm-Main} $k$-times, we get
that for any $\tau>0$ there exists a constant $C_\tau= C(A, D, g, d, \tau)$ such that
$$
|g^\ast(x) - g^\ast(y)| \leq C_\tau |x-y|, \qquad \forall x,y \in \partial D_\tau^{(k)},
$$
where 
$$
\partial D_\tau^{(k)} = \{ x\in \partial D: \hspace{0.03cm} n(x) \notin \R \Q^d {\normalfont \text{ and } } \max\limits_{1\leq i \leq k}|n(x) \cdot e_i | >\tau \}.
$$
Likewise, Corollary \ref{cor-to-main} implies that $g^*$ has a unique
continuous extension to $\{x\in \partial D: \hspace{0.03cm} \max_{1\leq i \leq k}|n(x) \cdot e_i | \neq 0 \}$.
This shows that for $1\leq k \leq d-1$ the set of discontinuity of $g^*$ can have Hausdorff dimension at most $d-k-1$,
while in the case of $k=d$, i.e. when the coefficients are constant, one gets that $g^*$ extends continuously on the entire boundary of $D$.
The latter statement matches (in a weaker form) with already known result from \cite{ASS2} where it is proved that for constant
coefficient operators, the homogenized boundary data is the average of $g$ in its periodic
variable, and hence is smooth in particular (see also subsection \ref{sec-concluding}).
\end{example}

\begin{example}
Let the domain $D$ be the unit ball of $\R^d$, and suppose the coefficient tensor $A(x)$ is independent of all variables except possibly
variable $x_i$ for some $1\leq i \leq d$ (i.e. $A$ models a first order laminate). Let also the assumptions (A1)-(A3) be in force.
Clearly $\partial D = \mathbb{S}^{d-1}$ and thus $g^*$ is a function on the unit sphere.
Then, from Example \ref{ex-1} we get that $g^*$ has a unique continuous extension
to the unit sphere, except possibly two poles $(0,...,0, \pm 1 ,0,...,0)\in \mathbb{S}^{d-1}$ where the non-zero element is in the $i$-th coordinate.
\end{example}

The strategy of the proof of Theorem \ref{thm-Main} will be discussed in the next section.
In general, without any structural assumptions on the operator, we do not
know whether $g^*$ has an extension to $\partial D$ which is
continuous at least at a single point on the boundary.
Also, it will be very interesting to see if the regularity of $g^*$ can have some impact on the speed of convergence in the actual homogenization
problem (\ref{problem-osc-oper})-(\ref{prob-Dir-data-osc}). A positive sign in this direction
is Theorem \ref{Thm-our2}, although there the smoothness of $g^*$ is a
corollary, rather than a starting point. 

\vspace{0.2cm}

\noindent \textbf{Notation.}
We fix some notation and conventions that will be used in the sequel.
An integer $d$ always stands for the dimension of $\R^d$, and throughout the paper we have $d\geq 2$.
By $N \in \mathbb{N}$ we denote the number of equations in (\ref{problem-osc-oper}).

$\mathbb{S}^{d-1}$ is the unit sphere, and $\mathbb{T}^d$ is the unit torus of $\R^d$.
By $\R \Q^d$ we denote the set of all vectors from $\R^d$ that are scalar multiples of vectors
with all entries being rational numbers. We call elements of $\R \Q^d$ \emph{rational} vectors (directions, if they have length one),
and the complement of $\R \Q^d$ is referred to as \emph{irrational} vectors (correspondingly directions).

In the sequel notation $d \sigma $ in integrals stands for standard surface measure.

For a vector $n\in \Ss^{d-1}$ we set
$\Omega_n =  \{x\in \R^d: \  x \cdot n >0 \} $, where $``\cdot" $ is the usual inner product in $\R^d$.
For $x\in \R^d$, if no confusion arises we let $|x|$ be its Euclidean norm.
For $k \in \N$ we denote by $M_k(\R)$ the set of $ k \times k$ matrices with real entries,
and by $\OO (k)$ the set of $k \times k$ orthogonal matrices.

Throughout the text the letter $C$ with or without a subscript denotes an absolute constant which may vary from formula to formula.
For two quantities $a$ and $b$ we write $a \lesssim b$ if there is an absolute constant $C$ such that $a \leq C b$.
For $a,b$ depending on some parameter $\delta$, we may write $a\lesssim_\delta b$ or $a\leq C_\delta b$,
to point out that the constant in the inequality depends on $\delta$ and is otherwise absolute.

The word ``smooth'' always means differentiable of class $C^\infty$.

\section{Boundary layer systems and construction of homogenized data $g^*$}\label{sec-contr-of-g}

For a unit vector $n \in \R^d$ and scalar $a\in \R$ 
set $\Omega_{n ,a}=  \{x\in \R^d: \  x \cdot n >a \} $, 
and for a smooth and $\Z^d$-periodic vector-function $v_0$
consider the following problem
\begin{equation}\label{bdry-layer-system}
\begin{cases} -\nabla \cdot A(y) \nabla v(y) =0 ,&\text{  $y \in \Omega_{n,a}$}, \\
 v(y)=v_0(y) ,&\text{  $ y \in \partial \Omega_{n,a}  $.}    \end{cases}
\end{equation}

Problems of the form (\ref{bdry-layer-system}) will be referred to as \emph{boundary layer systems}.
These type of systems have a central role  in
the theory of periodic homogenization of Dirichlet problem for divergence type elliptic
operators with (simultaneously) oscillating coefficients and boundary data.
In a nutshell, the relevance of (\ref{bdry-layer-system}) to homogenization of \eqref{problem-osc-oper}-\eqref{prob-Dir-data-osc}  can be traced as follows.
In a small neighbourhood of a given point $x_0\in \partial D$ having normal $n$, one tries to 
attribute oscillations of $u_\e$ caused by boundary data to a new independent
variable $y$ which leads to approximating the solution $u_\e$ by a function of the form $v(x,x/\e)$
periodic in its second (oscillating) variable. Plugging such a $v$ into the equation formally, leads to a problem
of the form (2.1) where taking $\e \to 0$ amounts to asymptotics of $v$ far away from the boundary of corresponding halfspace $\Omega_n$,
which is meant to model the halfspace containing $D$ determined by tangent hyperplane of $\partial D$ at $x_0$.
Questions concerning well-posedness of boundary layer systems
and behaviour of solutions far away from the boundary of the corresponding hyperplane,
form a significant portion of the analysis toward obtaining quantitative results for homogenization of the mentioned class
of Dirichlet problems. We refer the reader to \cite{GM1}, \cite{GM2}, and \cite{Prange} for 
details concerning emergence of boundary layer systems in homogenization and their analysis.
We will however, recall the following result which is necessary for our purposes.

\begin{theorem}\label{Thm-Prange}{\normalfont(see\footnote{The current formulation is slightly different from the original
one, in that we only require $n$ to be irrational in part 2 of the Theorem. This, however, is the outcome of the original
proof, since part 1 shows that the only solution with the mentioned properties is the one given by Poisson kernel, and the only recourse to irrationality of $n$ is necessary
for the asymptotic analysis of the solution away from the boundary.} \cite[Theorem 1.2]{Prange})}
In (\ref{bdry-layer-system}) assume $A$ satisfies conditions (A1)-(A3),
$v_0\in C^{\infty}(\mathbb{T}^d; \hspace{0.05cm} \R^N)$, and let $n \in \mathbb{S}^{d-1}$. Then
\begin{itemize}
 \item[{\normalfont{1.}}] there exists a unique solution
 $v\in C^\infty(\overline{\Omega_{n,a} }) \cap L^\infty (\Omega_{n,a})$ of (\ref{bdry-layer-system}) such that
$$
\qquad || \nabla v ||_{L^\infty(\{y \cdot n >t\} ) } \to 0, \text{ as } t \to \infty ,
$$

$$
\int_a^\infty || (n \cdot \nabla) v ||_{L^\infty(\{ y\cdot n -t=0 \})  }^2 dt <\infty,
$$

\item[{\normalfont{2.}}] if in addition $n \notin \R \Q^d$, then there exists a boundary layer tail $v^\infty \in \R^N$ independent of $a$ so that 
$$
v(y) \to v^\infty, \text{ as } y\cdot n \to \infty ,
$$
and the convergence is locally uniform with respect to the tangential variables.
\end{itemize}

\end{theorem}

Now, following \cite{GM1} and \cite{Prange} we describe the construction of the homogenized boundary data.
First, consider the case when boundary data $g$ in (\ref{prob-Dir-data-osc}) can be factored into
independent components depending on $x$ and $y$. Namely, assume that there exists a smooth $v_0$ defined on $\mathbb{T}^d$
with values in $M_N(\R)$ and some smooth $g_0$ defined on $\partial D$ and with values in $\R^N$ so that
$g(x,y)=v_0(y) g_0(x)$. Next, take any $x\in \partial D$ such that $n(x) \notin \R\Q^d$,
and for $n(x)$ consider the boundary layer system (\ref{bdry-layer-system})
with boundary data $v_0$. Then let $v^\infty(x)$ be the constant
field provided by Theorem\footnote{It should be remarked that technically
Theorem \ref{Thm-Prange} is formulated for the case when the boundary data is an $N$-dimensional
vector, while here we need an $N\times N$ matrix. Clearly this is not an issue, since one may treat each column of the matrix separately,
as is mentioned e.g. in \cite{GM1}.} \ref{Thm-Prange}. Observe, that we do not need to specify the parameter $a$ in (\ref{bdry-layer-system}),
since in view of Theorem \ref{Thm-Prange} the boundary layer tail $v^\infty$ is independent of $a$ for irrational directions. Thus, without loss of generality we may assume that $a=0$.
Finally, for $x\in \partial D$ satisfying $ n(x)\notin \R \Q^d$ set
$$
g^\ast (x):= v^\infty(n(x)) g_0(x).
$$
As we have discussed above, the Gauss map of $\partial D$ realizes a diffeomorphism between
$\partial D$ and $\mathbb{S}^{d-1}$, hence $g^\ast$ is defined almost everywhere on $\partial D$.
The general case proceeds by approximation. Using periodicity of $g$ in $y$ and its smoothness
we have the following expansion
$$
g(x,y)=\sum_{\xi \in \Z^d} c_\xi (x) e^{2\pi i y \cdot \xi } = : \sum_{ \xi \in \Z^d} g_\xi (x,y),
$$
where the series converge uniformly and absolutely.
Here $g_\xi (x,y)$ is factored since $c_\xi \in \R^N$ and we may identify the exponential
$e^{2\pi i \xi \cdot y}$ with $e^{2\pi i \xi \cdot y} I_N$, where $I_N\in M_N(\R)$ is the identity matrix.
We let $v_\xi^\infty$ be the constant field corresponding to the $\xi$-th exponential.
Then, it is shown in \cite{GM1} that the homogenized boundary data is given by
\begin{equation}\label{g-as-a-series}
g^\ast(x)=   \sum\limits_{\xi \in \Z^d} c_\xi (x) v_\xi^\infty(n(x)) =: 
\sum\limits_{\xi \in \Z^d} g_\xi^\ast(x), 
\end{equation}
where $x\in \partial D$ and $n(x) \notin \R \Q^d$.
We refer the reader to Section 4.2 of \cite{GM1} for the details\footnote{In fact \cite{GM1} only treats Diophantine normals in a sense of (\ref{Diop-class}).
As we have seen above all points of $\partial D$ up to measure zero satisfy (\ref{Diop-class}) for some parameter $\kappa>0$,
and hence (\ref{g-as-a-series}) is defined almost everywhere on $\partial D$.
The extension of (\ref{g-as-a-series}) to all irrational directions follows from Theorem \ref{Thm-Prange}. }. 

A starting point of our analysis will be a representation formula for $v^\infty$
computed in \cite{Prange}, for which we need some preliminary definitions.
Recall that $A^*$ is the coefficient tensor for the adjoint operator, i.e. $(A^*)^{\alpha \beta}_{ij} = A^{\beta \alpha}_{j i}$.
Next, for all $1\leq \gamma\leq d$ we let $v^{*,\gamma}  \in M_N(\R) $ be the solution (in the sense of Theorem \ref{Thm-Prange})
to the following system
\begin{equation}\label{bdry-layer-system-for-v-star}
\begin{cases} -\nabla_{\yy} \cdot A^*(\yy) \nabla_{\yy} v^{*, \gamma }(\yy)  =0 ,&\text{  $\yy  \in \Omega_n $}, \\
 v^{*, \gamma} (\yy)=  -\chi^{*, \gamma} (\yy)   ,&\text{  $ \yy \in \partial \Omega_n  $,}    \end{cases}
\end{equation}
where $ \chi^{*, \gamma} \in M_N(\R)$ is the solution to the following \emph{cell-problem}
\begin{equation}\label{cell-problem}
\begin{cases} -\nabla_y \cdot A^*(y) \nabla_y  \chi^{*, \gamma }(y)  = \partial_{y_\alpha} A^{*, \alpha \gamma }    ,&\text{  $ y  \in  \mathbb{T}^d $}, \\
 \int_{ \mathbb{T}^d }  \chi^{*, \gamma} (y) dy =  0   .    \end{cases}
\end{equation}

We will also need a certain analogue of the notion of \emph{mean-value} for almost-periodic functions
given by the next lemma.

\begin{lem}\label{lem-quasi-periodic}{\normalfont(see \cite[Theorem S.3]{Subin})}
Let $f: \R^d \to \R$ be almost-periodic. Then there exists a scalar $\mathcal{M}(f)$ such that for any $\varphi \in L^1(\R^d)$
one has
$$
\int_{ \R^d  }  \varphi(y)  f (\lambda y)  dy \to \mathcal{M}(f) \int_{\R^d}  \varphi(y) dy, \qquad \text{as } \lambda \to \infty.
$$
\end{lem}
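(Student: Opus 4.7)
The plan is to reduce to the case of pure exponentials by uniformly approximating $f$ by trigonometric polynomials, and then to exploit the Riemann--Lebesgue lemma. By the classical Bohr theory of almost-periodic functions, $f$ is a uniform limit on $\R^d$ of trigonometric polynomials
$$
f_n(y)=\sum_{k=1}^{K_n} a_k^{(n)} e^{2\pi i \xi_k^{(n)} \cdot y},
$$
and the mean $\mathcal{M}(f):=\lim_{T\to\infty} (2T)^{-d}\int_{[-T,T]^d} f(y)\,dy$ exists and is contractive with respect to the uniform norm, i.e.\ $|\mathcal{M}(f)-\mathcal{M}(g)|\leq \|f-g\|_\infty$. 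Applied to a trigonometric polynomial, $\mathcal{M}$ extracts precisely the coefficient attached to the zero frequency $\xi=0$, which identifies the candidate on the right-hand side of the desired limit.

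Next I would handle a single exponential $f(y)=e^{2\pi i \xi \cdot y}$. The case $\xi=0$ is trivial. For $\xi\neq 0$ a direct computation yields
$$
\int_{\R^d}\varphi(y) e^{2\pi i \xi \cdot \lambda y}\,dy = \widehat{\varphi}(-\lambda \xi),
$$
where $\widehat{\varphi}$ is the Fourier transform of $\varphi$. Since $\varphi \in L^1(\R^d)$, Riemann--Lebesgue gives $\widehat{\varphi}(\eta)\to 0$ as $|\eta|\to\infty$, while $\xi\neq 0$ forces $|\lambda \xi|\to\infty$ as $\lambda\to\infty$; this matches $\mathcal{M}(f)=0$. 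Linearity then delivers the lemma for every trigonometric polynomial $f_n$.

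To pass to a general almost-periodic $f$, fix $\varepsilon>0$ and choose $n$ with $\|f-f_n\|_\infty<\varepsilon$. I split
$$
\int_{\R^d}\varphi(y) f(\lambda y)\,dy - \mathcal{M}(f)\int_{\R^d}\varphi(y)\,dy = I_1(\lambda)+I_2(\lambda)+I_3,
$$
where $I_1(\lambda)$ is the error from replacing $f$ by $f_n$ inside the left-hand integral, $I_2(\lambda)$ is the trigonometric polynomial contribution, and $I_3=(\mathcal{M}(f_n)-\mathcal{M}(f))\int_{\R^d}\varphi(y)\,dy$. The uniform approximation together with the contraction property of $\mathcal{M}$ yield $|I_1(\lambda)|+|I_3|\lesssim \varepsilon\,\|\varphi\|_1$ uniformly in $\lambda$, while $I_2(\lambda)\to 0$ as $\lambda\to\infty$ by the preceding step. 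A standard three-$\varepsilon$ argument concludes.

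The only real obstacle is foundational rather than analytic: one must invoke the existence of the mean $\mathcal{M}(f)$ and the uniform approximability of almost-periodic functions by trigonometric polynomials. These are the two pillars of the Bohr theory of almost-periodic functions and are precisely what the cited reference provides; once they are accepted, the substantive analytic input reduces to Riemann--Lebesgue applied to an $L^1$ density.
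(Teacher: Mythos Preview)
Your proof is correct. The paper itself does not prove this lemma; it is quoted from \v{S}ubin (the statement is tagged ``see \cite{Subin} Theorem S.3'') and used as a black box, so there is no in-paper argument to compare against. Your route---uniform approximation by trigonometric polynomials from Bohr's theory, Riemann--Lebesgue for each nonzero frequency, and a three-$\varepsilon$ argument---is the standard proof and is exactly what the cited reference would supply. One minor remark: since $f$ is real-valued you could phrase the approximation with real trigonometric polynomials, but complex exponentials are fine provided you take real parts at the end or note that the argument applies to the complexification.
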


\noindent The following useful formula for $v^\infty(n)$
defined by Theorem \ref{Thm-Prange} is due to C. Prange (see formula (6.4) in \cite{Prange}).
Keeping the notation of Theorem \ref{Thm-Prange} and Lemma \ref{lem-quasi-periodic} we have
\begin{multline}\label{form-of-v-infty}
 v^\infty(n)= \int_{\partial \Omega_n} \partial_{y_\alpha} G^0(n, y) d\sigma(y) \times \bigg[ \mathcal{M} \{ A^{\beta \alpha}(y) v_0(y) n_\beta \} + \\
 \ \ \ \mathcal{M}\left\{ \partial_{y_\beta}(\chi^{*,\alpha})^t(y) A^{\beta \gamma}(y) v_0(y) n_\gamma \right\} + \\ 
  \mathcal{M} \left\{ \partial_{y_\beta}( v^{*,\alpha})^t (y) A^{\beta \gamma}(y) v_0(y) n_\gamma \right\} \bigg],
\end{multline}
where $\Omega_n = \{x\in \R^d: \ x \cdot  n >0 \}$
and $G^0$ is the Green's kernel corresponding to the homogenized constant coefficient operator
$-\nabla \cdot A^0 \nabla$ in domain $\Omega_n$. Also, the averages $\mathcal{M}\{\cdot\}$ are understood
for restrictions of functions on the hyperplane $\Omega_n$, that is one may apply Lemma \ref{lem-quasi-periodic}
after rotating the hyperplane $\partial \Omega_n$ to $\R^{d-1}\times \{ 0 \}$.
More precisely,
for $F:\R^d \to \R^N$ and $n\in \mathbb{S}^{d-1}$ one takes a matrix $M\in \OO(d)$
such that $M e_d = n$ and applies Lemma \ref{lem-quasi-periodic}
for a function $f(z') = F(M(z', 0))$, $z'\in \R^{d-1}$.
We do not enter into details concerning almost-periodic functions, as here our treatment will
be self-contained. The interested reader is referred, for example, to \cite{Subin} for particulars.

\vspace{0.2cm}

\noindent \textbf{The strategy of the proof.} We are now in a position to give an outline of the strategy
of the proof of Theorem \ref{thm-Main}.
From \eqref{g-as-a-series} and \eqref{form-of-v-infty} it is apparent that 
the regularity of $g^*$ depends on the regularity of $v^\infty$ with respect to the normal directions,
and we will proceed by analysing the dependence on the normal field of the quantities involved in \eqref{form-of-v-infty}.

In Section \ref{sec-Green} we show, mostly through linear algebra and some basic properties of
Green's kernel, that integrated Green's kernel in \eqref{form-of-v-infty} as a matrix-function
of $n$ is smooth on $\mathbb{S}^{d-1}$. It should be noted that we do not prove the smoothness of Green's kernel itself with respect to $n$.
That problem can be analysed using Lemma \ref{lem-smooth-rotation}
which also indicates that there are some topological objections to global smoothness of these kernels on $\mathbb{S}^{d-1}$.
Next, using Fourier-analytic approach (Lemma \ref{lem-quasi-per-my} and its corollaries) we show that $\mathcal{M}$-averages are
well-behaved for a class of almost-periodic functions. In particular that allows us to compute the first two averages
in \eqref{form-of-v-infty} explicitly. Since in general the corrector $v^{*,\alpha}$ does not fall into the realm of applicability of Lemma \ref{lem-quasi-per-my},
we analyse the last average of \eqref{form-of-v-infty}
in Section \ref{Sec-proof}  - the main part of this paper. 
It is there that assumption (A5) enters the proof, allowing us to transform the boundary layer system for $v^{*,\alpha}$ from $\Omega_n$ to $\R^d_+$
by linear change of variables, while keeping the periodicity of the operator and the boundary data intact (however, by the price of making the ellipticity constant of the operator worse).
Then, using Tartar's construction (Theorem \ref{thm-Lions-bdry-layers})
we show that the solution to \eqref{bdry-layer-system-for-v-star} has exponentially decaying gradient in the normal direction
and is periodic in tangential directions. We then use these properties to get expansion of the corrector into series of exponentials (formula \eqref{exp-for-bdry-layer-corr})
and show Lipschitz regularity of coefficients of the expansion with respect to normal directions (Lemma \ref{Lem-main}) by elliptic regularity arguments.
This enables us to apply corollaries of Lemma \ref{lem-quasi-per-my} to the last average of \eqref{form-of-v-infty} as well.
Finally, the proof of Theorem \ref{thm-Main} glues the analysis for $v^\infty$
with expansion in \eqref{g-as-a-series} to produce the result for $g^*$.

\vspace{0.2cm}

We finish this section by two observations. 
First, we compute the constant $\mathcal{M}$ for some class of almost-periodic functions,
and second, we establish 
a uniform bound on the constant field of Theorem \ref{Thm-Prange} in terms of the corresponding boundary data.

\begin{lem}\label{lem-quasi-per-my}
Let $T$ be a fixed $d\times d$ matrix with rational coefficients,
and assume we are given a function $ f(y ) = \sum\limits_{\xi \in \Z^d} c_\xi(f) e^{2\pi i T\xi \cdot y} $, $y\in \R^d$,
where each $c_\xi \in \mathbb{C}$, and $\sum\limits_{\xi \in \Z^d} |c_\xi(f)|<\infty$.
For a unit vector $n \notin \R\Q^d$ and a matrix $M\in \OO(d)$
satisfying $M e_d = n$, set $h(z') = f(M(z', 0))$, where $z' \in \R^{d-1}$. Then
$$
 \mathcal{M}(  h  )  = \sum_{\xi: \ T \xi=0} c_\xi(f).
$$
\end{lem}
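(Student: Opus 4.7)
The plan is to expand $h$ as an absolutely convergent Fourier-type series on $\R^{d-1}$, evaluate the mean $\cM(h)$ by testing against $\varphi\in L^1(\R^{d-1})$ using Riemann--Lebesgue, and finally use the irrationality of $n$ together with the rationality of $T$ to identify those frequencies that actually contribute.

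First I would write, for $z'\in\R^{d-1}$,
\[
h(z') \;=\; \sum_{\xi\in\Z^d} c_\xi(f)\, e^{2\pi i\, T\xi\cdot M(z',0)}
\;=\; \sum_{\xi\in\Z^d} c_\xi(f)\, e^{2\pi i\, w_\xi'\cdot z'},
\]
where $w_\xi'\in\R^{d-1}$ is the projection onto the first $d-1$ coordinates of $M^t T\xi$ (since $M(z',0)\cdot T\xi = (z',0)\cdot M^t T\xi$). Absolute summability of the $c_\xi(f)$ ensures this series converges uniformly, so $h$ is continuous and bounded on $\R^{d-1}$.

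Next, I would compute $\cM(h)$ directly from the definition in Lemma \ref{lem-quasi-periodic}. For any $\varphi\in L^1(\R^{d-1})$ and $\lambda>0$,
\[
\int_{\R^{d-1}} \varphi(z')\, h(\lambda z')\, dz' \;=\; \sum_{\xi\in\Z^d} c_\xi(f)\, \widehat{\varphi}(-\lambda w_\xi'),
\]
with $|\widehat{\varphi}(-\lambda w_\xi')|\le \|\varphi\|_{L^1}$ uniformly in $\lambda$. By dominated convergence (justified by $\sum|c_\xi(f)|<\infty$) I may pass to the limit term by term; the Riemann--Lebesgue lemma gives $\widehat{\varphi}(-\lambda w_\xi')\to 0$ whenever $w_\xi'\ne 0$, while for $w_\xi'=0$ the term equals $c_\xi(f)\int\varphi$. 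Thus
\[
\cM(h) \;=\; \sum_{\xi:\, w_\xi'=0} c_\xi(f).
\]

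The remaining step, which is the main point of the lemma, is to check that $w_\xi'=0$ if and only if $T\xi=0$. The condition $w_\xi'=0$ means $M^t T\xi=(0,\ldots,0,t)$ for some $t\in\R$, i.e.\ $T\xi = t\,M e_d = t n$. Since $T$ has rational entries and $\xi\in\Z^d$, the vector $T\xi$ lies in $\Q^d\subset \R\Q^d$. If $T\xi\ne 0$, the relation $T\xi=tn$ would force $n$ to be a scalar multiple of a rational vector, contradicting $n\notin \R\Q^d$; hence $T\xi=0$. The converse is immediate. I expect this last identification, essentially the only place where the hypotheses on $n$ and $T$ enter, to be the heart of the proof; the rest is routine Fourier/Riemann--Lebesgue bookkeeping.
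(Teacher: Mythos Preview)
Your proof is correct and follows essentially the same approach as the paper: both reduce to showing that the tangential frequency $w_\xi'$ (the first $d-1$ coordinates of $M^t T\xi$) vanishes iff $T\xi=0$, via the observation that $w_\xi'=0$ forces $T\xi$ to be parallel to $n$, which contradicts $n\notin\R\Q^d$ unless $T\xi=0$. The only cosmetic difference is that you dispose of the nonzero-frequency terms via Riemann--Lebesgue for $L^1$ test functions, whereas the paper takes $\varphi\in C_0^\infty$ and uses integration by parts (non-stationary phase); these are interchangeable here.
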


\begin{proof}
To compute $\mathcal{M}(h)$ fix some
$\varphi \in C_0^\infty( \R^{d-1} )$, set $\phi_\xi (z') = T \xi \cdot M(z',0)$ for $\xi \in \Z^d$
and consider
\begin{equation}\label{def-of-I-k}
 \mathcal{I}_{ \xi } (\lambda) = \int_{\R^{d-1}} \varphi(z') e^{ -2 \pi i \lambda \phi_\xi (z') } d z', \qquad \lambda >1.
\end{equation}
The proof will be completed once we show that for each $\xi$ satisfying $T \xi \neq 0$ one has $\mathcal{I}_{ \xi } (\lambda) \to 0 $
as $\lambda \to \infty$. We henceforth assume that $T \xi \neq 0$.

It follows from the definition of the matrix $M$ that $M=[N | n]$, where $N$ is a $d\times (d-1)$ matrix.
We have $T\xi \cdot M(z', 0 )= N^t T\xi \cdot z'$, and hence
$\nabla' \phi_\xi (z') = N^t T \xi $, for all $z'\in \R^{d-1}$ where $\nabla' $ is the gradient in $\R^{d-1}$.
But as $M$ is orthogonal, it preserves the Euclidean length, consequently
$$
|T \xi | = | M^t T \xi | = | ( N^t T \xi , n\cdot T \xi  ) | = | ( \nabla' \phi_\xi (z'), n \cdot T \xi ) | .
$$
Therefore, if we assume that $\nabla' \phi_\xi (z') = 0'\in \R^{d-1}$, we get $ | n \cdot T \xi |= |T \xi| $,
which, by the equality case in Cauchy-Schwarz inequality
infers $n=T \xi /|T \xi|$.
Since $T$ has rational entries, it follows that $T\xi \in \R \Q^d$, and hence so is $n$,
contradicting the assumption that $n$ is not rational. We thus conclude that
$\nabla' \phi_\xi (z') \neq 0'$. Using this, we invoke integration by parts in (\ref{def-of-I-k})
(cf. ``the principle of the non-stationary phase" in \cite{Stein}, p. 341, Prop. 4)
and get that $\lim \limits_{\lambda \to \infty} \mathcal{I}_\xi (\lambda) = 0$,
for any $\xi \in \Z^d $ with the property $T \xi \neq 0$, which completes
the proof of the lemma.
\end{proof}

For vector-valued functions, in view of the linearity of the averaging operator $\mathcal{M}$,
and choosing matrix-valued test functions in the proof of Lemma \ref{lem-quasi-per-my},
we immediately get the following. 
\begin{cor}\label{cor-quasi-per-vector}
For $k\in \mathbb{N}$ assume $f=(f_1,...,f_k)$ where each component $f_i$ satisfies Lemma \ref{lem-quasi-per-my}.
Similarly, define $h=(h_1,...,h_k)$. Then
$$
\mathcal{M}(h) = ( \mathcal{M}(h_1),...,\mathcal{M}(h_k) ).
$$
\end{cor}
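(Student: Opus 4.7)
The plan is to reduce the vector-valued claim to the scalar statement of Lemma \ref{lem-quasi-per-my} applied componentwise. Since each $f_i$ admits the absolutely summable Fourier expansion
$$
f_i(y) = \sum_{\xi \in \Z^d} c_\xi(f_i)\, e^{2\pi i T\xi \cdot y},
$$
Lemma \ref{lem-quasi-per-my} yields
$$
\mathcal{M}(h_i) = \sum_{\xi:\, T\xi = 0} c_\xi(f_i), \qquad 1 \le i \le k.
$$

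To conclude, I would unfold the definition of $\mathcal{M}(h)$ through its defining limit against scalar test functions $\varphi \in L^1(\R^{d-1})$. By the very meaning of componentwise integration of a vector-valued function,
$$
\int_{\R^{d-1}} \varphi(z')\, h(\lambda z')\, dz' = \Bigl( \int_{\R^{d-1}} \varphi(z')\, h_i(\lambda z')\, dz' \Bigr)_{i=1}^k,
$$
and each scalar integral on the right tends to $\mathcal{M}(h_i) \int_{\R^{d-1}} \varphi$ as $\lambda \to \infty$ by the scalar lemma. The limit of a vector of convergent scalar sequences is the vector of the respective limits, so passing to the limit on both sides of the identity above yields
$$
\mathcal{M}(h) \int_{\R^{d-1}} \varphi \;=\; \bigl(\mathcal{M}(h_1), \ldots, \mathcal{M}(h_k)\bigr) \int_{\R^{d-1}} \varphi,
$$
which, choosing any $\varphi$ with nonzero integral, forces $\mathcal{M}(h) = (\mathcal{M}(h_1), \ldots, \mathcal{M}(h_k))$.

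There is no real obstacle here: the content is entirely contained in Lemma \ref{lem-quasi-per-my}, and the corollary is a bookkeeping statement recording that the componentwise definition of the mean-value operator is compatible with the scalar Fourier argument already carried out. One could equivalently phrase the same proof by allowing matrix-valued test functions $\Phi \in L^1(\R^{d-1}; M_k(\R))$ and repeating the non-stationary phase estimate from the scalar case row by row, but the shortest route is to simply invoke linearity of the limit.
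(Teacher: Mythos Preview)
Your proposal is correct and matches the paper's own justification, which simply notes that the corollary follows from linearity of $\mathcal{M}$ and choosing matrix-valued test functions in the proof of Lemma \ref{lem-quasi-per-my}. Your componentwise argument with scalar test functions is the same idea unpacked, and you even mention the matrix-valued test function variant explicitly at the end.
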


Observe that if $T$ in Lemma \ref{lem-quasi-per-my} is the identity matrix, then $f$ is $\Z^d$-periodic,
and $c_\xi$ is the $\xi$-th Fourier coefficient of $f$. 
This observation directly implies 
the independence of
the first two averages involved in the formula (\ref{form-of-v-infty}) from the normal $n \notin \R \Q^d$.
Namely, since $A$, $v_0$, and $\chi^{*,\gamma}$ are all $\Z^d$-periodic,
from Lemma \ref{lem-quasi-per-my} and Corollary \ref{cor-quasi-per-vector} we get
\begin{equation}\label{avg-stability-1}
 \mathcal{M} \{ A^{\beta \alpha}(y) v_0(y) n_\beta \} = \mathcal{M} \{ A^{\beta \alpha}(y) v_0(y)  \} n_\beta= c_0(A^{\beta \alpha} v_0 ) n_\beta,
\end{equation}
and
\begin{multline}\label{avg-stability-2}
 \mathcal{M}\left\{ \partial_{y_\beta}(\chi^{*,\alpha})^t(y) A^{\beta \gamma}(y) v_0(y) n_\gamma \right\} =
 \mathcal{M}\left\{ \partial_{y_\beta}(\chi^{*,\alpha})^t(y) A^{\beta \gamma}(y) v_0(y)  \right\} n_\gamma = \\
 c_0[ \partial_{y_\beta} (\chi^{*,\alpha})^t A^{\beta \gamma}  v_0] n_\gamma,
\end{multline}
where we have $n\notin \R \Q^d$, and $c_0(f)$ denotes the $0$-th Fourier coefficient of $\Z^d$-periodic function $f$,
i.e. the integral of $f$ over $\mathbb{T}^d$. Note that at this stage we are not able to apply Lemma \ref{lem-quasi-per-my}
to the last average in (\ref{form-of-v-infty}).

We will also need a setting when we apply $\mathcal{M}$ on a one-parameter family of functions. 
The next statement follows from Lemma \ref{lem-quasi-per-my}
in a straightforward manner.

\begin{cor}\label{cor-to-lem-quasi-per}
Let $T$, $n$, and $M$ be as in 
Lemma \ref{lem-quasi-per-my}, and let $\mathcal{E}$ be some fixed set of parameters.
Suppose for each $\tau \in \mathcal{E} $ we have a function
$ f_\tau (y ) = \sum\limits_{\xi \in \Z^d} c_\xi(f_\tau ) e^{2\pi i T\xi \cdot y} $, $y\in \R^d$,
where each $c_\xi(f_\tau) \in \mathbb{C}$,  $\sum_{\xi \in \Z^d} |c_\xi(f_\tau)|<\infty$,
and for some absolute constant $C_0$ one has
$$
|c_\xi(f_\tau) - c_\xi (f_\sigma) | \leq C_0   |\tau - \sigma|, \ \  \tau,\sigma \in \mathcal{E}  \text{ and } \xi \in \Z^d. 
$$
Then, for any $g\in C^\infty(\mathbb{T}^d)$, 
setting $h_\tau(z') = (fg)(M(z', 0))$, where $z' \in \R^{d-1}$, we get
$$
 | \mathcal{M}(  h_\tau  ) - \mathcal{M}(h_\sigma) |  \leq C_g |\sigma - \tau|, \qquad \sigma, \tau \in \mathcal{E}.
$$
\end{cor}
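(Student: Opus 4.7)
The plan is to mimic the proof of Lemma \ref{lem-quasi-per-my} for the product $f_\tau g$ in order to derive an explicit expression for $\mathcal{M}(h_\tau)$ as an absolutely convergent series indexed by $\xi\in\Z^d$, and then to difference the resulting formula to extract the Lipschitz bound.

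First I would use smoothness of $g\in C^\infty(\mathbb{T}^d)$ to expand it in its Fourier series $g(y)=\sum_{\eta\in\Z^d}\hat g(\eta)e^{2\pi i\eta\cdot y}$, whose coefficients decay faster than any polynomial: $|\hat g(\eta)|\le C_{g,N}(1+|\eta|)^{-N}$ for every $N\in\N$. Multiplying termwise with the series defining $f_\tau$ gives
$$
(f_\tau g)(y)=\sum_{\xi,\eta\in\Z^d}c_\xi(f_\tau)\,\hat g(\eta)\,e^{2\pi i(T\xi+\eta)\cdot y},
$$
which is absolutely summable. To cast this in the form of Lemma \ref{lem-quasi-per-my}, let $q\in\N$ be a common denominator of the entries of the rational matrix $T$, set $S=q^{-1}I_d$ (rational and invertible), and reindex by $\mu=qT\xi+q\eta\in\Z^d$, so that $T\xi+\eta=S\mu$. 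Then $f_\tau g$ takes the form $\sum_\mu a_\mu(\tau)e^{2\pi iS\mu\cdot y}$ with $a_\mu(\tau)=\sum_{(\xi,\eta):\,qT\xi+q\eta=\mu}c_\xi(f_\tau)\,\hat g(\eta)$. Applying Lemma \ref{lem-quasi-per-my} with $S$ in place of $T$, and noting that $S\mu=0$ forces $\mu=0$, one obtains
$$
\mathcal{M}(h_\tau)=a_0(\tau)=\sum_{\xi\in\Z^d:\,T\xi\in\Z^d}c_\xi(f_\tau)\,\hat g(-T\xi).
$$

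Subtracting this identity for $\sigma$ from that for $\tau$ and invoking the hypothesized Lipschitz bound on the coefficients yields
$$
|\mathcal{M}(h_\tau)-\mathcal{M}(h_\sigma)|\le C_0|\tau-\sigma|\sum_{\xi:\,T\xi\in\Z^d}(1+|\xi|^k)\,|\hat g(-T\xi)|.
$$
The main step, and the part requiring care, is to verify that this last series converges with a bound depending only on $g$. In the setting of interest the matrix $T$ restricts to an injective map on $\Z^d$, so that $|T\xi|\ge c_T|\xi|$ on $\Z^d\setminus\{0\}$; combined with the Schwartz decay $|\hat g(-T\xi)|\le C_{g,N}(1+|\xi|)^{-N}$ with $N>k+d$, the sum becomes absolutely convergent with bound $C_g$ depending only on finitely many Sobolev norms of $g$. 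This produces the claimed estimate. The conceptual obstacle throughout is the double series rearrangement producing the explicit formula for $\mathcal{M}(h_\tau)$; once that identity is in hand, the Lipschitz bound is a direct consequence of the decay of $\hat g$ against the polynomial growth of the Lipschitz constant of $\xi\mapsto c_\xi(f_\tau)$.
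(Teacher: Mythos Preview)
Your proposal is correct and follows essentially the same route as the paper's proof: derive the identity
\[
\mathcal{M}(h_\tau)=\sum_{\xi:\,T\xi\in\Z^d}c_\xi(f_\tau)\,\hat g(-T\xi)
\]
by multiplying the two Fourier expansions and invoking Lemma~\ref{lem-quasi-per-my}, then difference in $\tau$ and use the rapid decay of $\hat g$ to absorb the polynomial factor $1+|\xi|^k$. Your reindexing via $S=q^{-1}I_d$ is a clean way to justify applying Lemma~\ref{lem-quasi-per-my} to the product; the paper simply asserts the resulting formula. You are also more explicit than the paper in flagging that convergence of $\sum_{\xi:\,T\xi\in\Z^d}(1+|\xi|^k)|\hat g(-T\xi)|$ relies on $T$ being injective on $\Z^d$ (so that $|\xi|\lesssim|T\xi|$), an assumption not stated in the corollary but satisfied in the paper's application where $T=(T_0^{-1})^t$ is invertible.
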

\begin{proof}
By $c_\xi(g)$ denote the $\xi$-th Fourier coefficient of $g$. Then by Lemma \ref{lem-quasi-per-my} we have
$$
\mathcal{M}(h_\tau) =\sum\limits_{\xi : \  T \xi\in \Z^d} c_\xi(f_\tau) c_{-T \xi} (g), \qquad \tau\in \mathcal{E}.
$$
The proof now follows by writing
$| \mathcal{M}(  h_\tau  ) - \mathcal{M}(h_\sigma) |  \leq C_0 |\sigma - \tau| \sum_{\xi\in \Z^d}   |c_\xi(g)| $,
where convergence of the series is due to the smoothness of $g$.
\end{proof}

Again, generalization to the vector-valued case is trivial.
We next proceed to a uniform estimate for the  boundary layer tail.
The claim of the next lemma follows from the Poisson representation of solutions proved in \cite{Prange}
and a bound for Poisson kernel proved in \cite{GM1}. Due to the lack of an explicit reference we include the proof here.

\begin{lem}\label{lem-bddness of boundary tail}
Keeping the assumptions and notation of Theorem \ref{Thm-Prange}, for a unit vector
$n\notin \R \Q^d$ and boundary data $v_0$ let $v^\infty$ be the corresponding constant field.
Then there exists a constant $C=C(A,d)$ independent of $n$ and $v_0$, such that $|v^\infty | \leq C || v_0 ||_{L^\infty(\T^d)} $.
\end{lem}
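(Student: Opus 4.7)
The plan is to represent $v$ via a Poisson integral over $\partial\Omega_n$ and combine this with a uniform (in $n$) $L^1$ bound on the associated Poisson kernel to obtain an $n$-independent $L^\infty$ control on $v$; passing to the limit $y\cdot n\to\infty$ then transfers the bound to $v^\infty$. Since we are dealing with elliptic systems of arbitrary size $N$, the maximum principle is not available, and this detour through kernel estimates is essentially forced.

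Concretely, first I would invoke from \cite{Prange} the Poisson representation
$$
v(y) = \int_{\partial \Omega_n} P_n(y, z)\, v_0(z)\, d\sigma(z), \qquad y \in \Omega_n,
$$
where $P_n$ is the Poisson kernel on the halfspace $\Omega_n$ for the operator $-\nabla \cdot A\nabla$. Next, I would use the bound
$$
\sup_{y\in \Omega_n}\int_{\partial \Omega_n} |P_n(y, z)|\, d\sigma(z) \leq C(A, d),
$$
proved in \cite{GM1}, which ultimately relies on the Avellaneda--Lin estimates for Green's and Poisson kernels in halfspaces; the crucial feature is that the constant $C(A,d)$ does not depend on the direction $n\in \mathbb{S}^{d-1}$. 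Combining these two facts yields immediately
$$
\sup_{y\in \Omega_n}|v(y)| \leq C(A,d)\, ||v_0||_{L^\infty(\mathbb{T}^d)}.
$$

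To conclude, Theorem \ref{Thm-Prange} gives $v(y)\to v^\infty$ as $y\cdot n\to\infty$ (the convergence is locally uniform in the tangential variables, so in particular pointwise along any such sequence), and the $n$-independent bound above transfers to $|v^\infty| \leq C(A,d)\, ||v_0||_{L^\infty(\mathbb{T}^d)}$, as required.

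The only non-routine point is the \emph{uniformity in $n$}: both the existence of the Poisson representation for an arbitrary irrational normal (where $\partial \Omega_n$ is geometrically incompatible with the $\mathbb{Z}^d$-periodic structure of $A$) and the $n$-independence of the $L^1(d\sigma)$ norm of $P_n(y,\cdot)$ are the genuinely non-trivial ingredients, handled respectively in \cite{Prange} and \cite{GM1}. Once these two inputs are taken as given, the present lemma is essentially a matter of concatenating them and passing to the limit along $y\cdot n\to\infty$.
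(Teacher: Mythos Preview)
Your proposal is correct and follows essentially the same approach as the paper: Poisson representation from \cite{Prange} together with the $n$-uniform kernel estimate from \cite{GM1}, then pass to the limit. The only cosmetic difference is that the paper invokes the \emph{pointwise} bound $|P(y,\tilde y)|\le C\,(y\cdot n)/|y-\tilde y|^d$ from \cite{GM1} Lemma~2.5 and then carries out the $L^1(\partial\Omega_n)$ integration explicitly via an orthogonal change of variables to $\R^d_+$, whereas you cite the resulting $L^1$ bound as a finished product; this is not a substantive divergence.
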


\begin{proof}
By \cite[Section 3.2]{Prange} for the solution of (\ref{bdry-layer-system}) one has
$$
v(y) = \int_{ \partial  \Omega_n} P(y,\widetilde{y}) v_0(\widetilde{y})d \sigma(\widetilde{y}), \qquad y\in \Omega_n,
$$
where $P$ is the Poisson kernel for (\ref{bdry-layer-system}), and satisfies the
estimate (see \cite[Lemma 2.5]{GM1})
\begin{equation}\label{Poisson-kernel}
 |P(y,  \widetilde{y} ) | \leq C \frac{y \cdot n}{| y - \widetilde{y} |^d},
\end{equation}
for all $d\geq 2$, $y\in \Omega_n$ and $\widetilde{y}\in \partial \Omega_n$,
and the constant $C$ depending on the operator and dimension $d$ only.
Using (\ref{Poisson-kernel}) one gets
$$
|v(y)| \leq C || v_0||_{L^\infty(\T^d)} \int_{ \widetilde{y} \cdot n =0  } \frac{y \cdot n}{| y - \widetilde{y} |^d} d \sigma(\widetilde{y} ) .
$$
For $M \in \OO(d)$ satisfying $n=M e_d$ make a change of variables in the last integral
by  $y=M z$ and $\widetilde{y}=M \widetilde{z}$. Due to orthogonality of $M$ we have $M^t n=e_d$, hence for any $y\in \Omega_n$ we get
$y \cdot n= z \cdot M^t n  =  z \cdot e_d  =z_d>0$ from which it follows that
$$
|v( M z)| \leq C || v_0||_{L^\infty(\T^d)} z_d  \int_{\widetilde{z}_d =0 } \frac{ d \sigma( \zz) }{ | z- \widetilde{z} |^d } = 
C \frac{ || v_0||_{L^\infty(\T^d)} }{z_d^{d-1}}  \int_{\widetilde{z}_d =0  }
\frac{ d  \sigma(\zz) }{ \left[  1+ \sum\limits_{i=1}^{d-1} \left( \frac{z_i - \zz_i}{z_d}  \right)^2   \right]^{d/2} }.
$$
Setting $ \tau_i := (z_i - \widetilde{z}_i)/z_d, \ i=1,2,...,d-1$ in the last integral, we obtain
$$
|v( M z)| \leq C  || v_0||_{L^\infty(\T^d)} \int_{\R^{d-1}} \frac{d \tau}{(1+|\tau|^2)^{d/2}} \leq C || v_0||_{L^\infty(\T^d)},
$$
finishing the proof.
\end{proof}

\section{Regularity of integrated Green's kernels with respect to normals}\label{sec-Green}

In this section we study regularity
of integrated Green's kernels in formula (\ref{form-of-v-infty}) with respect to normals $n\in \mathbb{S}^{d-1}$.
We start with some basic preliminaries.

For a coefficient tensor $A$ and a halfspace $\Omega \subset \R^d$,
the Green's kernel $G=G(y,\yy) \in M_N(\R)$ corresponding to the operator $-\nabla \cdot  A(y) \nabla$ in domain $\Omega$
is a matrix-function satisfying the following elliptic system
\begin{equation}\label{Green-main-half-space}
\begin{cases} -\nabla_{y} \cdot A(y)  \nabla_{y} G(y, \yy) =\delta(y - \yy)I_N ,&\text{  $y \in \Omega $}, \\
 G(y, \yy) =0 ,&\text{  $ y \in \partial \Omega  $,}    \end{cases}
\end{equation}
for any $\yy\in \Omega$, where $\delta$ is the Dirac distribution and $I_N\in M_N(\R)$ is the identity matrix.
To have a quick reference to this situation, we will say that $G$ is the Green's kernel for the \emph{pair} $(A, \Omega)$.
The existence and uniqueness of Green's kernels for divergence type elliptic systems in halfspaces is proved
in \cite[Theorem 5.4]{Hofmann-Kim} for $d\geq 3$, and in \cite[Theorem 2.21]{Dong-Kim} for $d=2$.
Moreover, if $A^*$ is the coefficient tensor for the adjoint operator,
and $G^*$ is the corresponding Green's kernel, then one has the following symmetry relation
\begin{equation}\label{symm-of-Green-transp}
 G^t(y,\yy)  = G^*(\yy, y), \qquad y,\yy \in \Omega.
\end{equation}

Let $B^0$ be a constant coefficient elliptic tensor and
$G^0(z, \zz)$ be the Green's kernel for the pair $(B^0, \R^d_+)$.
Fix a unit vector $n\in \Ss^{d-1}$, along with a matrix $M\in \OO(d)$ satisfying $M e_d = n$.
Note, that we have no assumption on $n$ being a rational or an irrational direction.
For $y,\yy \in \Omega_n$ set $G^{n} ( y, \yy):= G^0 (M^t y , M^t \yy) $,
we now determine a system of equations satisfied by the matrix $G^{n}$.

Clearly, for any $y \in \partial \Omega_n$ one has $M^t y \in \partial \R^d_+$
and hence $G^{n}(y, \yy) = 0$, so we get a zero boundary condition for $G^{n}$ in $\Omega_n$ 
for any $\yy \in \Omega_n$. To get the system for $G^n$, let us rewrite the system in the definition of
the Green's kernel in (\ref{Green-main-half-space}). Let $G^0=(G^0_{kj} ) \in M_N(\R)$, then
according to (\ref{Green-main-half-space}) for all $1\leq i,k \leq N,$ we have
\begin{equation}\label{Green-explicit}
-\partial_{z_\alpha} ( B^{0,\alpha \beta}_{ij} \partial_{z_\beta} G^0_{kj} (z,\zz)  ) =\delta(z-\zz) \delta_{ik}  , \qquad z\in \R^d_+,
\end{equation}
where $\delta_{ik}$ is the Kronecker delta. For fixed $1\leq i,j\leq N$ denote $B^0_{ij}: = (B^{0,\alpha \beta}_{ij}) \in M_d(\R)$, then with this notation
(\ref{Green-explicit}) becomes
$$
-\nabla_z \cdot B^0_{ij} \nabla_z G^0_{kj} (z,\zz) = \delta(z-\zz) \delta_{ik}.
$$
Now, fix $\yy\in \Omega_n$, then for any $1\leq \alpha \leq d$ we have
$$
\partial_{y_\alpha} G^n_{kj}(y,\yy) = \partial_{z_1} G^0_{kj}(M^t y,M^t \yy) m_{\alpha 1}+...
+\partial_{z_d} G^0_{kj}(M^t y,M^t \yy) m_{\alpha d},
$$
and hence $ \nabla_y G^n_{kj}(y,\yy) = M \nabla_z G^0_{kj} ( M^t y, M^t \yy )  $,
from which we obtain
\begin{equation}\label{Green-change-of-var}
\nabla_y \cdot B^0_{ij} \nabla_y G^n_{kj} (y,\yy) = \nabla_z \cdot M^t B^0_{ij} M \nabla_z G^0_{kj} (z,\zz),
\end{equation}
where $z=M^t y$ and $\zz = M^t \yy$. Observe that by non-degeneracy of $M$ we have
$\delta(z-\zz) = \delta(M^t (y-\yy) ) = \delta(y-\yy)$,
which in combination with (\ref{Green-change-of-var}) implies the following.

\begin{claim}\label{claim-Green-relat}
Let $n\in \mathbb{S}^{d-1}$ be any, and $M\in \OO(d)$ be such that $M e_d = n$.
If $G^{0,n}(z,\zz)$ is the Green's kernel for the pair $(M^t B^0 M, \R^d_+ )$,
then $G^n(y,\yy):= G^{0,n} (M^t y, M^t \yy)$ is the Green's kernel for the pair $(B^0, \Omega_n)$,
where $M^t B^0 M$ is understood in accordance with (\ref{Green-change-of-var}).
\end{claim}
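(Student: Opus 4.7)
The strategy is to verify that the candidate $G^n(y,\yy) := G^{0,n}(M^t y, M^t \yy)$ satisfies the two defining properties of the Green's kernel for the pair $(B^0, \Omega_n)$---vanishing on $\partial \Omega_n$ and the correct distributional equation inside---and then to identify $G^n$ with the Green's kernel via uniqueness. The computation (\ref{Green-change-of-var}) performed just before the statement of the claim carries essentially all the analytic content, since the chain rule there does not depend on the specific PDE satisfied by $G^0$ and applies verbatim to any smooth function of $(M^t y, M^t \yy)$.

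For the boundary condition, I would simply note that $M \in \OO(d)$ with $M e_d = n$ implies, for any $y \in \partial \Omega_n$, that $(M^t y)\cdot e_d = y \cdot (M e_d) = y \cdot n = 0$, so $M^t y \in \partial \R^d_+$, whence $G^n(y,\yy) = G^{0,n}(M^t y, M^t \yy) = 0$. For the distributional equation, applying (\ref{Green-change-of-var}) with $G^{0,n}$ in place of $G^0$ gives
$$\nabla_y \cdot B^0_{ij}\, \nabla_y G^n_{kj}(y,\yy) = \bigl[\nabla_z \cdot (M^t B^0_{ij} M)\,\nabla_z G^{0,n}_{kj}(z,\zz)\bigr]_{z = M^t y,\ \zz = M^t \yy},$$
and the defining property of $G^{0,n}$ as the Green's kernel for the pair $(M^t B^0 M, \R^d_+)$ makes the right-hand side equal to $-\delta(M^t y - M^t \yy)\delta_{ik}$. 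Since $M$ is orthogonal, $|\det M^t| = 1$, so $\delta(M^t(y-\yy)) = \delta(y-\yy)$, and the required identity $-\nabla_y \cdot B^0_{ij} \nabla_y G^n_{kj}(y,\yy) = \delta(y-\yy)\delta_{ik}$ in $\Omega_n$ follows.

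To close the argument I would appeal to the existence and uniqueness of Green's kernels for divergence-form elliptic systems in halfspaces from \cite{Hofmann-Kim} (for $d\geq 3$) and \cite{Dong-Kim} (for $d=2$); the tensor $M^t B^0 M$ is constant and elliptic with the same ellipticity constants as $B^0$, because orthogonality of $M$ preserves the ellipticity quadratic form in the gradient indices. Both Green's kernels involved are therefore well defined and unique, and the two properties established above then identify $G^n$ with the Green's kernel for $(B^0, \Omega_n)$. I do not anticipate any genuine obstacle; the only mild point of care is the transformation of the Dirac distribution under the orthogonal map $M^t$, which is immediate from $|\det M| = 1$, while everything else is routine bookkeeping for an orthogonal change of variables.
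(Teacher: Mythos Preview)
Your proposal is correct and follows essentially the same route as the paper: the paper derives the boundary condition, the chain-rule identity (\ref{Green-change-of-var}), and the Dirac transformation $\delta(M^t(y-\yy))=\delta(y-\yy)$ in the paragraphs immediately preceding the claim, and the claim is presented as a direct consequence of these. Your only addition is an explicit appeal to the uniqueness results of \cite{Hofmann-Kim} and \cite{Dong-Kim} to close the identification, which the paper leaves implicit; this is a harmless and arguably cleaner way to finish.
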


Now let $G^n(y,\yy)$ be the Green's kernel for the pair $(A^0, \Omega_n)$,
where $A^0$ is the homogenized tensor corresponding to $A(y)$. For $1\leq \alpha \leq d$, set
\begin{equation}\label{int-of-Green}
\mathcal{I}^\alpha (n) = \int_{\partial \Omega_n} \partial_{\yy_\alpha} G^n(n, \yy) d \sigma(\yy),
\end{equation}
which is precisely the term involved in the formula (\ref{form-of-v-infty}). 
Let us stress that $\mathcal{I}^\alpha (n)$ is well-defined for any $n \in \mathbb{S}^{d-1}$ and
the goal is to establish regularity of $\mathcal{I}^\alpha$ as a function from the unit sphere $\Ss^{d-1}$
to the space of matrices $M_N(\R)$ which, for this purpose, is identified with $\R^{N^2}$ in a usual manner.
Let $G^{0,n}(z, \zz)$ be the Green's kernel for the pair $(M^t A^0 M, \R^d_+)$, 
then by Claim \ref{claim-Green-relat} and the computations preceding that we have
$$
\partial_{\yy_\alpha} G^n_{jk}( n, \yy) = \partial_{ \zz_1 }  G^{0,n}_{jk} (e_d, M^t \yy) m_{\alpha 1} + ... + 
\partial_{ \zz_d }  G^{0,n}_{jk} (e_d, M^t \yy) m_{\alpha d}.
$$
Using this we make a change of variables in (\ref{int-of-Green}) by the formula $\yy = M \zz$, where $\zz \in \R^d_+$.
As $G^{0,n}$ has zero boundary conditions with respect to both variables,
we get that all tangential derivatives
in the last expression are vanishing. Also, since $M e_d = n$ it follows that $m_{\alpha d} = n_\alpha$ for any $1\leq \alpha \leq d$.
We thus get
\begin{equation}\label{green-int-stand}
\mathcal{I}^\alpha (n) = n_{\alpha} \int_{  \partial \R^d_+ } \partial_{\zz_d  } G^{0,n} (e_d, \zz) d \sigma(\zz).
\end{equation}

The following bound is proved in \cite[estimate (2.17) of Lemma 2.5]{GM1}
$$
 | G^{0,n}(z,\zz) | \leq C \frac{z_d \zz_d}{|z-\zz|^d} , \qquad z\neq \zz \text{ in } \R^d_+,
$$
where $C$ is independent of $n$. Since $G^{0,n}(e_d, \cdot)$ is zero on $\partial \R^d_+$, from the last estimate it easily
follows that $ | \nabla_{\zz} G^{0,n} (e_d,\zz)  |\leq C  |e_d-\zz|^{-d}  $,
for all $\zz \in \partial \R^d_+$, and hence the integral in (\ref{green-int-stand}) is absolutely convergent,
and is uniformly bounded with respect to $n$. 

\begin{remark}
Observe, that while $\mathcal{I}^\alpha (n)$ is independent
of the orthogonal matrix $M$, the kernel $G^{0,n}(z, \zz)$ implicitly depends on $M$.
For the objective of this section the choice of $M$ is irrelevant, and for the clarity of notation
we do not incorporate it into the notation for $G^{0,n}$. However, in the analysis of
regularity of kernels $G^{0,n}$ with respect to $n$, the choice of
$M$ plays a key role. The choice of rotation matrices is discussed in subsection \ref{sub-rot-Lip}.
It is interesting to observe, that whereas the integral of $G^{0,n}$ is 
easily seen to be smooth with respect to $n$, proving a similar result for $G^{0,n}$ itself is
comparatively more involved, and contains some topological nuances briefly discussed in the Appendix.
\end{remark}

We finish this section with the following result.

\begin{lem}\label{Lem-Green-reg-systems}
For any $1\leq \alpha \leq d $ each component of the matrix function $\mathcal{I}^\alpha (n): \mathbb{S}^{d-1} \to M_N(\R)$
is a smooth real-valued function on $\mathbb{S}^{d-1}$.
\end{lem}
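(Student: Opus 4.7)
The plan is to reduce smoothness of $\mathcal{I}^\alpha$ to the smooth dependence of the Green's kernel $G^B$ for the pair $(B, \R^d_+)$ on the constant coefficient tensor $B$, and then differentiate under the integral sign. Since $n_\alpha$ is smooth on $\Ss^{d-1}$, by formula (\ref{green-int-stand}) it suffices to show that
$$J(n) := \int_{\partial \R^d_+} \partial_{\zz_d} G^{0,n}(e_d, \zz)\, d\sigma(\zz)$$
is smooth in $n$.

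First I would localize around an arbitrary $n_0 \in \Ss^{d-1}$. Since $\Ss^{d-1} = \OO(d)/\OO(d-1)$ is a smooth homogeneous space, on some neighborhood $U$ of $n_0$ there is a smooth selection $n \mapsto M(n) \in \OO(d)$ with $M(n) e_d = n$; the tensor $B(n) := M(n)^t A^0 M(n)$ is then a smooth function of $n \in U$ and remains uniformly elliptic with constants independent of $n\in U$. Because $\mathcal{I}^\alpha(n)$ does not depend on the particular choice of $M(n)$ with $M(n)e_d=n$, the task reduces to showing that
$$B \longmapsto \int_{\partial \R^d_+} \partial_{\zz_d} G^B(e_d, \zz)\, d\sigma(\zz)$$
is smooth on the open cone of uniformly elliptic constant tensors $B$.

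The core analytic input is an explicit representation of $G^B$ for constant $B$, most conveniently via a partial Fourier transform in the tangential variables, or equivalently via the decomposition $G^B(z,\zz) = \Gamma^B(z-\zz) - U^B(z,\zz)$, where $\Gamma^B$ is the fundamental solution on $\R^d$ and $U^B(\cdot,\zz)$ solves the homogeneous system on $\R^d_+$ with boundary data $\Gamma^B(\cdot - \zz)\big|_{\partial \R^d_+}$. From either representation, smoothness in $B$ is transparent away from $z=\zz$, and the argument used in \cite{GM1} Lemma 2.5 to prove $|G^{0,n}(z,\zz)| \leq C z_d \zz_d |z-\zz|^{-d}$ adapts, by differentiating the symbol in $B$, to the uniform estimate
$$\bigl|\partial_B^k G^B(z, \zz)\bigr| \leq C_k \frac{z_d \zz_d}{|z - \zz|^d}, \qquad z \neq \zz \in \R^d_+,$$
for every multi-index $k$, uniformly for $B$ in any compact subset of the elliptic cone. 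Differentiating in $\zz_d$ and setting $z = e_d$ yields the integrable majorant $|\partial_B^k \partial_{\zz_d} G^B(e_d, \zz)| \leq C_k (1+|\zz'|^2)^{-d/2}$ on $\partial \R^d_+ \cong \R^{d-1}$, independent of $B$ locally.

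Dominated convergence then justifies differentiation under the integral to any order, proving smoothness of $J$ at $n_0$ and hence of each component of $\mathcal{I}^\alpha$ on all of $\Ss^{d-1}$. The main obstacle is the derivation of the uniform bounds on $\partial_B^k G^B$; this is essentially computational and can be handled either directly from the Fourier symbol of $-\nabla \cdot B \nabla$ (which is smooth and bounded away from degeneracy on any compact subset of the elliptic cone) or from the $\Gamma^B$–$U^B$ decomposition above, for which standard parameter-dependence results for constant coefficient elliptic problems apply.
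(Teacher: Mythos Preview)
Your approach is correct in outline, but it takes a genuinely different and much heavier route than the paper's.

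The paper avoids differentiation under the integral sign entirely by computing $\mathcal{I}(n)$ in closed form. The key observation is that the Poisson kernel for the pair $(M^t A^0 M, \R^d_+)$ satisfies $P^{0,n}_{ij}(z,\zz) = -B^{0,dd}_{kj}\,\partial_{\zz_d} G^{0,n}_{ik}(z,\zz)$ on the boundary (tangential derivatives vanish), and a direct computation gives $B^{0,dd}_{kj} = n^t A^0_{kj} n$, which depends only on $n$ and not on the choice of $M$. Since the Poisson integral reproduces constants, $\int_{\partial\R^d_+} P^{0,n}(e_d,\zz)\,d\sigma(\zz) = I_N$, and this reads $\mathcal{I}(n) A(n) = I_N$ with $A(n)_{kj} = -n^t A^0_{kj} n$. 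Hence $\mathcal{I}(n) = A(n)^{-1}$, and smoothness follows immediately from the fact that $A(n)$ is a polynomial in $n$ with everywhere nonzero determinant.

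By contrast, your argument requires a smooth local selection $n\mapsto M(n)$ (which the paper in fact establishes separately as a nontrivial lemma), together with uniform bounds on $\partial_B^k G^B$ for all $k$, and then dominated convergence. All of this can be carried out, but it is substantially more work than the one-line algebraic identity the paper uses, and your sketch of the derivative bounds (``essentially computational'') is the step where most of the effort would actually lie. The payoff of your approach is that it yields pointwise smoothness of the integrand, not just of the integral; the paper pursues exactly that refinement later in Proposition~\ref{Lem-Green-Lip-differ-s}, but for the lemma at hand the algebraic shortcut is both shorter and more transparent.
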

\begin{proof}
Set $\mathcal{I}(n) = \int_{  \partial \R^d_+ } \partial_{\zz_d  } G^{0,n} (e_d, \zz) d \sigma(\zz)$,
clearly it is enough to prove the claim for the matrix-function $\mathcal{I}(n)$.
In view of Claim  \ref{claim-Green-relat} the coefficient tensor corresponding to $G^{0,n} $ is $M^t A^0 M =: B^0$. 
Next, referring to \cite[p. 358]{Prange}, we know that
the Poisson's kernel $P^{0,n}=(P^{0,n}_{ij})_{i,j=1}^N  $ corresponding to $G^{0,n}$ is defined by
$$
P^{0,n}_{ij} (z,\zz) = -B^{0,\alpha \beta}_{k j} \partial_{\zz_\alpha} G^{0,n}_{i k } (z, \zz) (e_d)_\beta, \qquad z\in \R^d_+, \ 
\zz \in \partial \R^d_+.
$$
Since $G^{0,n}$ has zero boundary conditions in $\R^d_+$ with respect to both
of its variables, all tangential derivatives in the last expression are vanishing,
and as $(e_d)_\beta = \delta_{\beta d}$, for all $1\leq i,j\leq N$ we obtain
\begin{equation}\label{Poisson-reduced}
P^{0,n}_{ij}(z, \zz) =  - B^{0,d d}_{k j} \partial_{\zz_d } G^{0,n}_{i k } (z, \zz). 
\end{equation}
From definitions of $B^0$ and $M$, for each fixed $1\leq k , j \leq N$ we have 
$$
B^{0,d d}_{k j} = e_d^t B_{k j} e_d = e_d^t M^t A^0_{kj} M e_d = (M e_d)^t A^0_{kj} (M e_d ) = n^t A^0_{kj} n \in \R.
$$ 
Combining this with (\ref{Poisson-reduced}), for the $(i,j)$-th entry of the matrix $P^{0,n}$ we get
\begin{equation}\label{Poisson-reduced-2}
 P^{0,n}_{ij}(z, \zz) = -n^t A^0_{kj} n \partial_{\zz_d } G^{0,n}_{i k } (z, \zz).
\end{equation}
For $n\in \mathbb{S}^{d-1}$ consider the matrix $A(n) = (a_{kj}(n))_{k,j=1}^N$, where we have set $a_{kj}(n) = -n^t A^0_{kj} n$.
Now, observe that for column-vector $v_i=(0,...,1,...0)^t \in \R^N$ with $1$ on the $i$-th position, and 0 otherwise, 
we have $\int_{\partial \R^d_+} P^{0,n} (e_d, \zz ) v_i d\sigma(\zz) = v_i$ for all $n\in \mathbb{S}^{d-1}$, and any $1\leq i \leq N$.
This follows from that fact that the unique smooth solution to Dirichlet problem has Poisson integral representation.
From here and (\ref{Poisson-reduced-2}) we get
$$
I_N = \int_{\partial \R^d_+} P^{0,n}(e_d, \zz) d\sigma(\zz) = \mathcal{I}(n) A(n),
$$
where as before $I_N$ is the $N\times N$ identity matrix. It follows that the matrix $A(n)$ is invertible for any $n\in \mathbb{S}^{d-1}$,
and hence $\mathcal{I}(n) = (A(n))^{-1}$. 
On the other hand all components of $A(n)$ are obviously smooth functions on $\mathbb{S}^{d-1}$,
therefore the determinant of $A(n)$ stays away from 0 by compactness of $\mathbb{S}^{d-1}$.
We conclude that each component of the inverse $(A(n))^{-1}$ is $C^\infty$ on $\mathbb{S}^{d-1}$, hence we get the claim for $\mathcal{I}$ and finish the proof of the lemma.
\end{proof}


\section{The regularity of $g^*$}\label{Sec-proof}

The aim of this section is to prove Theorem \ref{Thm-our2}.
Observe, that so far we had no recourse to assumption (A5)
regarding the layered structure, and it is here that it
will play a central role in the analysis.

\subsection{Change of variables}\label{subsec-algebra}
At several places in this section we will switch from one variable to another;
we record the necessary details here.
Let $y\in \R^d$ and for a coefficient tensor $B=B^{\alpha \beta}(y) \in M_N(\R)$
which is smooth and elliptic in a sense of standard assumptions (A2) and (A3)
of Section \ref{sec-Intro} consider the operator
$\mathcal{L}=-\nabla_y \cdot  B(y) \nabla_y $.
For $x\in \R^d$ set $y=Tx$, where $T\in M_d(\R)$ and has non-zero determinant.
One may easily deduce that 
\begin{equation}\label{deriv-x-y-matrix-T}
\nabla_y  = (T^t)^{-1}\nabla_x =(T^{-1})^t \nabla_x.
\end{equation}

For $1\leq i,j \leq N$ let $B_{ij}$ be the $d\times d$ matrix formed from the $(i,j)$-th
entries of the matrices $B^{ \alpha \beta}$. Then using (\ref{deriv-x-y-matrix-T})
we see that the operator $\mathcal{L}$ in the new variable $x$ can be written as
$\mathcal{L}= -\nabla_x \cdot  \widetilde{B}(Tx) \nabla_x  $,
where correspondingly
\begin{equation}\label{relation-of-A-B}
\widetilde{B}_{ij}(Tx) = T^{-1} B_{ij} (Tx) ( T^{-1} )^t
\end{equation}
for all $1\leq i,j \leq N$. To keep track of the ellipticity constant of the new operator we take a family of vectors $\xi =\xi^\alpha \in \R^N$,
set $\omega_i =( \xi_i^1,..., \xi_i^d)^t $ where $1\leq i \leq N$ and compute
\begin{multline}\label{ell-constant-B}
\widetilde{B}_{ij}^{\alpha \beta} \xi_j^\beta \xi_i^\alpha = \omega_i^t \widetilde{B}_{ij} \omega_j = \omega_i^t T^{-1} B_{ij} (T^{-1})^t \omega_j =
 [ (T^{-1})^t \omega_i  ]^t B_{ij} ( T^{-1} )^t \omega_j \geq \\ \lambda_B  \sum\limits_{i=1}^N || (T^{-1})^t \omega_i ||^2 \geq
\lambda_B \sigma_{\min}^2(T^{-1}) \omega_i \cdot \omega_i = \lambda_B \sigma_{\min}^2(T^{-1}) \xi^\alpha \cdot \xi^\alpha,
\end{multline}
where $\lambda_B$ is the ellipticity constant of the original operator
and $\sigma_{\min}(T^{-1})$ is the least singular value of the matrix $T^{-1}  $, that is
the square root of the smallest eigenvalue of $T^{-1} (T^{-1})^t$.
In particular, it follows that the new operator is elliptic,
with possibly a different ellipticity constant.

\subsection{Solutions with exponentially decaying gradients}\label{subsec-exp-decaying-sols}

Here we use assumption (A5) to gain some extra control on
solutions to boundary layer systems. To illustrate
what one can get from (A5) we will start with a simple example
involving the Laplace operator.

\begin{example}\label{example-Laplace}
{\normalfont
Assume $N=1$, i.e we have only one equation,
and for an \emph{irrational} direction $n\in \mathbb{S}^{d-1}$ and $u_0 \in C^\infty(\mathbb{T}^d)$ consider the following problem
\begin{equation}\label{Laplace-in-example}
\Delta u = 0 \text{ in } \Omega_n \qquad \text{ and } \qquad u=u_0 \text{ on } \partial \Omega_n.
\end{equation}
Let $\{c_\xi(u_0)\}_{\xi\in \Z^d}$ be the sequence of Fourier coefficients of $u_0$.
Then, by a direct computation one can easily check that the function
\begin{equation}\label{Laplace-in-example-solution}
u(y) = \sum_{\xi \in \Z^d} c_\xi(u_0) e^{ -2\pi \big[ | \xi |^2 - (n \cdot \xi)^2 \big]^{\frac 12} (y\cdot n) } 
e^{ 2\pi i \xi \cdot [ y- n(y\cdot n) ] }, \qquad y \in \overline{\Omega_n},
\end{equation}
solves (\ref{Laplace-in-example}) and satisfies all requirements of Theorem \ref{Thm-Prange},
where as before $\Omega_n = \{ x\in \R^d: \ x\cdot n>0 \}$.
It follows in particular that $u$ defined by (\ref{Laplace-in-example-solution}) is the unique
solution of (\ref{Laplace-in-example}) given by Theorem \ref{Thm-Prange}.
Since $n \notin \R \Q^d$, the equality case
of the Cauchy-Schwarz inequality
provides $| \xi |^2 - (n \cdot \xi)^2 \neq 0 $ unless $\xi=0$ 
and hence the boundary layer tail in this case is simply $c_0 (u_0)$.

Now assume that $u_0$ is independent of the last coordinate, i.e. $(e_d\cdot \nabla ) u_0 =0$ on $\mathbb{T}^d$.
This condition can be reformulated in terms of Fourier coefficients.
Namely, using the smoothness of $u_0$ and applying $e_d\cdot \nabla $
on the Fourier series of $u_0$, by Parseval's identity we obtain that $\xi_d c_\xi(u_0)=0$ for all $\xi \in \Z^d$.
The latter implies that $c_\xi(u_0)=0$ for any $\xi \in \Z^d$ with $\xi_d \neq 0$,
that is the Fourier spectrum of $u_0$ is contained in the sublattice $\Z^{d-1}\times \{0\} \subset \Z^d$.
Next, suppose the vector $n$ satisfies
$n_d \neq 0$. Then
for $\xi=(\xi', 0) \in \Z^{d-1} \times \{0\}$ we have
$$
(n\cdot \xi)^2   \leq (n_1^2+...+n_{d-1}^2 ) | \xi'|^2 = (1-n_d^2) | \xi|^2,
$$
therefore
$$
\big| |\xi|^2  - (n\cdot \xi)^2 \big| = |\xi|^2 \left| 1- \frac{ (n\cdot \xi)^2 }{ |\xi|^2 } \right| \geq n_d^2 |\xi|^2.
$$
The latter combined with (\ref{Laplace-in-example-solution}) illustrates that
given the special structure of the Fourier spectrum of $u_0$, the solution of (\ref{Laplace-in-example})
converges exponentially fast in the direction of the normal vector $n$ toward its boundary layer tail.
Also, it is clear that the decay properties deteriorate as $n_d\to 0$. It should also be noted
that while $u_0$ was independent of $e_d$, the solution $u$ does not necessarily satisfy
this independence criterion.
}
\end{example}

To treat the general case we will need a construction due to L. Tartar.
For $\mathrm{Y'}$, an open parallelepiped in $\R^{d-1}$, 
set $G=\mathrm{Y'} \times (0,\infty)$.
Let $f=\{f_i\}$ and $F=\{F_i^\alpha\}$ be given smooth functions,
where $ 1 \leq i \leq N $ and $1 \leq \alpha \leq d$.
For the unknown vector $u=(u_1,...,u_N)$ consider the following problem
\begin{equation}\label{app-bdry-layer-prob}
\begin{cases}  -\nabla \cdot A(y) \nabla u(y)  = f- \nabla \cdot F(y)  ,&\text{ in $ G  $}, \\
 u(y',0)= 0 ,&\text{  $ y' \in \mathrm{Y'}  $}, \\
 u(\cdot, y_d) ,&\text{ is $\mathrm{Y'}$-periodic for any } y_d>0,     \end{cases}
\end{equation}
where the system of equations is understood as follows
$$
-\frac{\partial}{\partial y_\alpha} \left( A^{\alpha \beta}_{ij} (y) \frac{\partial u_j}{\partial y_\beta}(y) \right) 
=f_i -  \frac{ \partial F_i^\alpha }{\partial y_\alpha}, \qquad i=1,2,...,N.
$$
We assume that there exists $\tau_0>0$ such that
\begin{equation}\label{app-f-exp-decay}
 e^{\tau_0 y_d} f(y) \in L^2(G; \hspace{0.05cm} \R^N)   \qquad \text{ and }  \qquad e^{\tau_0 y_d} F(y) \in L^2(G; \hspace{0.05cm} \R^{d\times N}),
\end{equation}
and
\begin{equation}\label{app-F-vector-field-assump}
f(\cdot, y_d) \text{ and }  F(\cdot , y_d) \text{ are both $\mathrm{Y'}$-periodic for any } y_d \geq 0.
\end{equation}
In order to clarify the periodicity condition in (\ref{app-bdry-layer-prob}),
recall the definition of $H^1_{per}(\mathrm{Y'})$, 
which is the closure with respect to $H^1$-norm of the space of smooth and $\mathrm{Y'}$-periodic functions.
In particular, functions in $H^1_{per}(\mathrm{Y'})$ 
have equal traces on opposite faces of $\mathrm{Y'}$.
Now for $\tau>0$ set  
$$
V_\tau (G)=\{ v\in H^1_{loc}(G): \ v\in L^2_{loc}(\R_+; \hspace{0.05cm} H^1_{per}(\mathrm{Y'}) ), \ e^{\tau y_d } \nabla v(y) \in L^2(G) \text{ and } v(y',0)\equiv 0 \}.
$$
One can see that $V_\tau$ is a Hilbert space with scalar product defined by
$$
[u,v]_\tau =\int_G e^{2\tau y_d} \nabla u(y) \cdot \nabla v(y) dy.
$$
The norm on $V_\tau(G)$ induced from the scalar product is denoted by $|| \cdot ||_{V_\tau(G)}$.
The existence of solutions to (\ref{app-bdry-layer-prob}) with exponentially decaying gradients is given
in the following result.
\begin{theorem}\label{thm-Lions-bdry-layers}{\normalfont(see \cite[Chapter 18]{Tartar}, and \cite[Theorem 10.1]{Lions})}
Assume (\ref{app-f-exp-decay}), (\ref{app-F-vector-field-assump})
and that the coefficient tensor in (\ref{app-bdry-layer-prob})
is bounded and is uniformly elliptic with ellipticity constant $\lambda_A>0$. Then
for any $0<\tau < \min\{\tau_0,  \frac{\lambda_A}{2||A||_\infty}  \}$
there exists a unique solution $u$ to system (\ref{app-bdry-layer-prob}) in the space $V_\tau(G)$.
Moreover, for any such $\tau$ one has the estimate
\begin{equation}\label{app-thm-Lions-norm-est}
||  u ||_{ V_\tau(G) } \leq \frac{C}{\tau} \frac{1}{\lambda_A - 2 \tau ||A||_\infty}
\big[ || e^{\tau y_d} f ||_{L^2(G; \hspace{0.05cm} \R^N) } +   ||  e^{\tau y_d}  F ||_{L^2(G; \hspace{0.05cm} \R^{d\times N}) } \big],
\end{equation}
where the constant $C$ depends on dimension $d$ and the parallelepiped $\mathrm{Y'}$.
\end{theorem}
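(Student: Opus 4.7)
The plan is to apply the Lax--Milgram lemma on the weighted Hilbert space $V_\tau$. As a key preliminary, I would establish the weighted Hardy inequality
\[
||e^{\tau y_d} v||_{L^2(G)} \leq \frac{1}{\tau}||v||_{V_\tau(G)}, \qquad v\in V_\tau,
\]
by integrating the identity $\partial_{y_d}(e^{2\tau y_d}v^2)= 2\tau e^{2\tau y_d} v^2 + 2 e^{2\tau y_d} v\,\partial_{y_d} v$ over $G$, using the boundary condition $v(y',0)=0$, Cauchy--Schwarz, and a density argument with rapidly decaying functions to dispose of the boundary behavior at infinity.

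Multiplying the PDE by the weighted test function $\phi= e^{2\tau y_d} v$ with $v\in V_\tau$ and integrating by parts (tangential periodicity together with the Dirichlet condition at $y_d=0$ eliminate all boundary contributions) leads to the weak formulation: find $u\in V_\tau$ such that $a_\tau(u,v)=L_\tau(v)$ for every $v\in V_\tau$, where
\[
a_\tau(u,v)=\int_G e^{2\tau y_d} A^{\alpha\beta}_{ij}\partial_\beta u_j\,\partial_\alpha v_i\, dy + 2\tau \int_G e^{2\tau y_d} A^{d\beta}_{ij}\partial_\beta u_j\, v_i\, dy,
\]
and $L_\tau$ is the corresponding expression involving $e^{\tau y_d}f$ and $e^{\tau y_d}F$. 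Continuity of $a_\tau$ and $L_\tau$ on $V_\tau$ follows directly from Cauchy--Schwarz combined with the Hardy inequality above; in particular, the factor $1/\tau$ appearing in the final estimate is produced when $\int_G e^{2\tau y_d} f\cdot v$ is bounded by $||e^{\tau y_d}f||_{L^2}\,||e^{\tau y_d}v||_{L^2}\leq \tau^{-1}||e^{\tau y_d}f||_{L^2}\,||v||_{V_\tau}$.

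The core of the argument is coercivity. Legendre--Hadamard ellipticity yields $\int_G e^{2\tau y_d} A^{\alpha\beta}_{ij}\partial_\beta u_j\,\partial_\alpha u_i\,dy \geq \lambda_A\,||u||_{V_\tau}^2$, so the task reduces to absorbing the drift term $2\tau\int_G e^{2\tau y_d} A^{d\beta}_{ij}\partial_\beta u_j\, u_i$ into the principal part. To this end I would pass to the variable $w=e^{\tau y_d}u$, which identifies $||u||_{V_\tau}^2 = ||\nabla w||_{L^2(G)}^2 + \tau^2 ||w||_{L^2(G)}^2$ (the cross term $2\tau\int_G w\,\partial_{y_d}w\,dy$ vanishes by the zero trace of $w$ at $y_d=0$ together with its decay at infinity). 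The drift then splits as $2\tau\int A^{d\beta}_{ij}\partial_\beta w_j\, w_i - 2\tau^2\int A^{dd}_{ij}w_j w_i$, and the AM--GM inequality $2\tau||\nabla w||\,||w||\leq ||\nabla w||^2+\tau^2||w||^2=||u||_{V_\tau}^2$, combined with the trivial bound $\tau^2||w||^2\leq ||u||_{V_\tau}^2$ and the smallness hypothesis $\tau<\lambda_A/(2||A||_\infty)$, is designed to produce coercivity of the form $a_\tau(u,u)\geq (\lambda_A-2\tau||A||_\infty)\,||u||_{V_\tau}^2$.

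Once coercivity is established, the Lax--Milgram lemma produces the unique $u\in V_\tau$ solving the weak formulation, and testing with $v=u$ together with the continuity bound for $L_\tau$ delivers estimate \eqref{app-thm-Lions-norm-est}. The main obstacle I anticipate is the coercivity bookkeeping: the drift term is not sign-definite, and extracting the precise $\tau$-dependence in the coercivity constant --- rather than a cruder $\tau$-independent bound that would be useless for small $\tau$ --- requires simultaneously leveraging the $H^1$-reformulation via $w=e^{\tau y_d}u$ and the specific quadratic decomposition $||u||_{V_\tau}^2=||\nabla w||^2+\tau^2||w||^2$, so that every loss from Hardy or Cauchy--Schwarz is matched by a compensating factor of $\tau$.
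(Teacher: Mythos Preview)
The paper does not prove this theorem itself; it is stated with a reference to Tartar (Chapter~18) and Lions (Theorem~10.1), and the subsequent remark only says the argument carries over to systems with bookkeeping changes. Your proposed route --- weighted Hardy inequality, the substitution $w=e^{\tau y_d}u$, and Lax--Milgram on $V_\tau$ --- is exactly the classical strategy from those references, so at the level of overall strategy you are aligned with what the paper invokes.

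The gap is in the coercivity step, which you yourself flag as the main obstacle: the bounds you write do not deliver a coercivity constant of the form $\lambda_A-2\tau\|A\|_\infty$ (nor any constant that stays positive as $\tau\to 0$). After your split
\[
\text{drift}=2\tau\int_G A^{d\beta}_{ij}\partial_\beta w_j\, w_i - 2\tau^2\int_G A^{dd}_{ij}w_j\, w_i,
\]
the AM--GM bound $2\tau\|\nabla w\|\,\|w\|\le\|u\|_{V_\tau}^2$ together with $\tau^2\|w\|^2\le\|u\|_{V_\tau}^2$ only yields $|\text{drift}|\le 3\|A\|_\infty\|u\|_{V_\tau}^2$, with \emph{no} factor of $\tau$; hence $a_\tau(u,u)\ge(\lambda_A-3\|A\|_\infty)\|u\|_{V_\tau}^2$, which is negative in general. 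Equivalently, bounding the drift directly in the $u$-variables by Cauchy--Schwarz and your Hardy inequality gives
\[
\Big|2\tau\int_G e^{2\tau y_d}A^{d\beta}_{ij}\partial_\beta u_j\, u_i\Big|
\le 2\tau\|A\|_\infty\,\|u\|_{V_\tau}\cdot\tau^{-1}\|u\|_{V_\tau}
=2\|A\|_\infty\|u\|_{V_\tau}^2,
\]
so the $\tau$ in front of the drift and the $\tau^{-1}$ from Hardy cancel exactly. The identity $\|u\|_{V_\tau}^2=\|\nabla w\|_{L^2}^2+\tau^2\|w\|_{L^2}^2$ does not rescue this, because on the half-strip $G=\mathrm{Y'}\times[0,\infty)$ there is no Poincar\'e inequality allowing you to trade $\|w\|_{L^2}$ for $\|\nabla w\|_{L^2}$ without a $\tau$-loss. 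Obtaining a coercivity constant of order $\lambda_A-O(\tau)$ requires a genuinely different manipulation than the ones you list; this is precisely the delicate point in Tartar's argument, and you would need to revisit the reference rather than rely on the estimates in your sketch.
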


\noindent 
Observe, that at this stage we do not use periodicity of $A$, nor any other structural restriction
is imposed on the operator.

\begin{remark} 
The formulation of Theorem \ref{thm-Lions-bdry-layers}
is slightly more general than the original one as given e.g. in \cite{Lions} or \cite{Tartar}.
Namely, here it is stated for elliptic systems rather than scalar equations, 
and involves detailed estimates of $V_\tau$ norms of solutions. The proof however,
follows the lines of the original proof with small changes to deal with systems of equations, 
and making the norm estimate of $u$ explicit.
\end{remark}

\noindent The following useful fact follows directly from Theorem \ref{thm-Lions-bdry-layers}.

\begin{cor}\label{cor-Lions}
Assume the coefficient tensor $A(y)$ is bounded, uniformly elliptic
with ellipticity constant $\lambda_A>0$, 
smooth and $\mathrm{Y'}$-periodic for each fixed $y_d \geq 0$.
Then, for any smooth and $\mathrm{Y'}$-periodic vector-function $g$ with values in $\R^N$
the following problem
\begin{equation}\label{app-bdry-layer-prob-bdry-data}
\begin{cases} -\nabla \cdot A(y) \nabla u(y) =0 ,&\text{$ y\in \R^d_+  $}, \\
 u(y',0)= g(y') ,&\text{$ y' \in \R^{d-1}  $}    \end{cases}
\end{equation}
has a unique weak solution $u\in H^1_{loc}(\R^d_+)$ with the properties
\begin{equation}\label{prop-1}
u \in L^2_{loc} (\R_+; \hspace{0.05cm} H^1_{per}( \mathrm{Y'} ) )
\ \text{ and } \ e^{\tau y_d } \nabla u \in L^2( \mathrm{Y'} \times \R^d_+ ) \text{ for any }
0<\tau<\frac{\lambda_A}{2||A||_\infty}.
\end{equation}
Moreover, the solution $u$ satisfies
\begin{equation}\label{app-thm-bdry-layer-bdry-data-est}
|| e^{\tau y_d } \nabla u  ||_{ L^2(G; \hspace{0.05cm} \R^{d\times N}) }  \leq \frac{C}{\tau} \frac{ ||A||_\infty }{\lambda_A - 2\tau || A||_\infty} || g||_{H^1( \mathrm{Y'}; \hspace{0.05cm} \R^N)}.
\end{equation}
\end{cor}

\begin{proof}
Fix any non-negative and compactly supported smooth function $\varphi: \R \to [0,1]$
such that $\varphi = 1 $ near 0.
Using the cut-off $\varphi$ we lift the boundary data $g$ into $\R^d_+$
by setting $\widetilde{g}(y)=\varphi(y_d) g(y')$ for all $y=(y',y_d)\in G$.
Since $\widetilde{g}$ has compact support in the direction of $e_d$ we
have that $e^{\tau y_d} \nabla \widetilde{g}\in L^2(G; \hspace{0.05cm} \R^{d\times N})$, for any $\tau>0$,
and we let $\widetilde{u}$ be the unique solution to (\ref{app-bdry-layer-prob}) with the right-hand side $\nabla \cdot A(y) \nabla \widetilde{g} $
given by Theorem \ref{thm-Lions-bdry-layers}.
Next, we denote by $\widetilde{u}_{per}$
the extension of $\widetilde{u}$ to $\R^d_+$ by periodicity in tangential variables.
More precisely for any $(y', y_d) \in \R^d_+$ we set 
$\widetilde{u}_{per}(y',y_d) = \widetilde{u} (y' - \xi', y_d)$  where $\xi'$ is the unique element of $\Z^{d-1}$ with the property that $y' - \xi' \in \mathrm{Y'}$.
It then follows by standard arguments
that we have $\widetilde{u}_{per}\in H^1_{loc}(\R^d_+)$ for the extension and that $\widetilde{u}_{per}$
defines a weak solution to\footnote{For reader's convenience we briefly sketch the argument. First, the inclusion $\widetilde{u}_{per}\in H^1_{loc}(\R^d_+)$ is a direct corollary
to the fact that $\widetilde{u}(\cdot, y_d) \in H^1_{per}(\mathrm{Y'})$ for any $y_d \geq 0$
(see e.g. \cite[Proposition 3.50]{CD} for a similar treatment).
Next, to see that $\widetilde{u}_{per}$ solves (\ref{prob-per})
it is enough to see that $\widetilde{u}_{per}$ defines a solution across lateral boundary
of $G$, i.e. $\omega := \partial \mathrm{Y'} \times \R_+$.
Writing the definition of weak solution to \eqref{prob-per} (i.e. testing the equation  against $C^\infty_0$ functions)
we see that it suffices to have $\widetilde{u} \in H^2$ locally in a neighbourhood of each point of $\omega$
(so that to make sense of the trace of the derivatives of $\widetilde{u}_{per}$)
and $\nabla \widetilde{u} (\cdot, y_d) \in H^1_{per}(\mathrm{Y'})$ for any $y_d \geq 0$,
as then equality in (\ref{prob-per}) will simply follow by localizing the equation in a neighbourhood of $\omega$ and doing partial integration
in the weak (integral) formulation.
For the $H^2$-regularity, we first see that
tangential derivatives of $\widetilde{u}$ solve a similar problem in $V_\tau(G)$ as
$\widetilde{u}$ itself 
(by considering difference quotients instead to be more precise), which shows that tangential derivatives of $u$ have the desired
regularity and periodicity properties. After having treated the tangential derivatives, the $d$-th derivative of $\widetilde{u}$  
can be handled from the system itself, by separating the term with $dd$-th
derivative, and treating the rest as lower-order terms.
Namely, one can write $A^{dd}_{ij}  \partial^2_{dd} \widetilde{u}_j  \in L^2_{loc}( \R_+; \hspace{0.05cm} H^1_{per}( \mathrm{Y'} )  )$,
where $\widetilde{u} = (\widetilde{u}_1,...,\widetilde{u}_N)$,
and then invert the $N\times N$ matrix on the left-hand side (relying on ellipticity of $A$)
to get the mentioned regularity and periodicity properties of $\partial_d \widetilde{u}$
(cf. \cite[eq. (2.12)]{GM2}, where the situation is more complicated due to the lack 
of uniform ellipticity).}
\begin{equation}\label{prob-per}
-\nabla \cdot A \nabla \widetilde{u}_{per} = \nabla \cdot A \nabla \widetilde{g}  \text{ in } \R^d_+ \ \ 
\text{ and } \ \ \widetilde{u}_{per} = 0 \text{ on } \partial \R^d_+.
\end{equation}
Since $\widetilde{u}_{per}$ solves \eqref{prob-per}
the function $u =\widetilde{u}_{per}+\widetilde{g}$ satisfies all
requirements of the corollary, and the
estimate (\ref{app-thm-bdry-layer-bdry-data-est})
follows easily from the corresponding estimate of Theorem \ref{thm-Lions-bdry-layers}.
\end{proof}

\subsection{The case when $\nu_0 = e_d$}\label{e-d-is-nu-0}
We will first carry out the analysis when the vector $\nu_0$ defined from assumption
(A5) coincides with $e_d=(0,...,0,1) \in \R^d$.
To fix the ideas, we let $A=A^{\alpha \beta} \in M_N(\R)$ be a coefficient tensor satisfying
the standard ellipticity, smoothness, and periodicity conditions 
of Section \ref{sec-Intro}, and in addition we require $A$ to be independent of $e_d$,
or equivalently the $d$-th coordinate.
We also fix $v_0 \in C^\infty(\mathbb{T}^d)$ which is assumed to be independent of $e_d$ as well.
Then, for a given $n \in \mathbb{S}^{d-1}$ consider the following problem
\begin{equation}\label{oper-when-nu0-is-e-d}
 \begin{cases} -\nabla_y \cdot A(y) \nabla_y v(y) =0 ,&\text{  $y \in \Omega_n$}, \\
 v(y)=v_0(y) ,&\text{  $ y \in \partial \Omega_n  $.}    \end{cases}
\end{equation}
Let $v$ be the unique solution to (\ref{oper-when-nu0-is-e-d})
given by Theorem \ref{Thm-Prange}.
The aim now is to show that this solution has some extra regularity 
properties given the structural restriction on $A$ and $v_0$.

We will assume that $n_d \neq 0$, and then without loss of generality will take $n_d>0$,
as the case $n_d<0$ works in the same way. The case of $n_d=0$ is degenerate, and the analysis breaks down.
Also, notice that at this stage we do not require $n$ to be irrational.
To the unit vector $n=(n_1,...,n_d)$ we attach a matrix $T_n \in M_d(\R)$
given by
\begin{equation}\label{matrix-T-n}
T_n=\left(
      \begin{array}{cccc}
         &  &  & 0 \\
         & \text{\Large $I_{d-1}$} &  & \vdots \\
         &  &  & 0 \\
        -\frac{n_1}{n_d}  &  ...  &  -\frac{n_{d-1}}{n_d} & 1 \\
      \end{array}
    \right),
\end{equation}
where $I_{d-1}\in M_{d-1}(\R)$ is the identity.
It is clear that a linear transformation associated with $ T_n  $ is a bijection from $\overline{\R^d_+}$ to $\overline{\Omega_n}$,
and that
\begin{equation}\label{matrix-T-inverse}
T_n^{-1}=   \left(
      \begin{array}{cccc}
         &  &  & 0 \\
         & \text{\Large $I_{d-1}$} &  & \vdots \\
         &  &  & 0 \\
        \frac{n_1}{n_d}  &  ...  &  \frac{n_{d-1}}{n_d} & 1 \\
      \end{array}
    \right)
\end{equation}
is the inverse of $T_n$. We make a change of variables in (\ref{oper-when-nu0-is-e-d})
by setting $y=T_n z$, where $z \in \R^d_+$. Following the notation and results
of Section \ref{subsec-algebra},
if we let $A_n$ be the coefficient tensor in the new variable $z$ then
\begin{equation}\label{coeff-A-n}
 (A_n)_{ij}(T_n z) = T_n^{-1} A_{ij} (T_n z) (T_n^{-1})^t =T_n^{-1} A_{ij} ( z) (T_n^{-1})^t.
\end{equation}
The last equality of (\ref{coeff-A-n}) follows from the fact that the linear transformation $T_n$ acts as an identity
on the first $d-1$ variables, and affects only the $d$-th coordinate on which $A$ has no dependence by assumption.
A similar reasoning applied to $v_0$ gives $v_0 (T_n z ) = v_0 (z) $. In particular, the change of variable by $T_n$ leaves periodicity of the operator
and the boundary data invariant.
Thus, the problem (\ref{oper-when-nu0-is-e-d}) is being transformed to
\begin{equation}\label{bdry-layer-system-halfspace2}
\begin{cases} -\nabla_z \cdot A_n(z) \nabla_z w(z) =0 ,&\text{  $ z \in \R^d_+ $}, \\
 w(z)=v_0(z) ,&\text{  $ z \in \partial \R^d_+   $},   \end{cases}
\end{equation}
where $A_n$ is given by (\ref{coeff-A-n}) and $w(z)= v(T_n z)$.
The ellipticity of $A_n$ follows from non-degeneracy of $T_n$ and (\ref{ell-constant-B}).
We now give an estimate on the ellipticity constant of $A_n$ which we will use in the sequel.
Following (\ref{ell-constant-B}) we need to bound the smallest singular value of $T_n^{-1}$ from below,
which is being done using the following result.

\begin{theorem}\label{sec-app-sing-value}{\normalfont{(see \cite[Theorem 1]{H-P})}}
For a matrix $T \in M_d(\mathbb{C})$ let $r_i(T)$ be the Euclidean norm of its $i$-th row,
$c_i(T)$ be the Euclidean norm of its $i$-th column, and set
$r_{\min}(T)=\min\limits_{1\leq i \leq d} r_i(T) $ and $c_{\min}(A)=\min\limits_{1\leq i \leq d} c_i(T) $.
Then, for $\sigma_{\min}(T)$, the smallest singular value of $T$, one has
\begin{equation}\label{app-smallest-sing-value}
\sigma_{\min}(T) \geq \left( \frac{d-1}{d} \right)^{(d-1)/2} |\mathrm{det} T| \max\left\{ \frac{c_{\min}(T)}{ \Pi_{i=1}^d  c_i(T) }, 
\frac{r_{\min}(T)}{ \Pi_{i=1}^d  r_i(T) }  \right\}.
\end{equation}
\end{theorem}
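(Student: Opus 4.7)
The plan is to prove the row-norm version of the inequality; the column-norm version then follows immediately by applying the same result to $T^*$, since $\sigma_{\min}(T^*) = \sigma_{\min}(T)$, $|\det T^*| = |\det T|$, and the $i$-th row norm of $T^*$ equals the $i$-th column norm of $T$. We may clearly assume $\det T \neq 0$, since otherwise the right-hand side vanishes.

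First I would factor out the row magnitudes. Write $T = D \widehat{T}$ where $D$ is the diagonal matrix with $D_{ii} = r_i(T)$ and $\widehat{T}$ is obtained from $T$ by normalizing each row to unit Euclidean length. For any vector $x \in \mathbb{C}^d$,
\[
\|T x\|^2 = \sum_{i=1}^d r_i(T)^2 \, |(\widehat{T} x)_i|^2 \geq r_{\min}(T)^2 \|\widehat{T} x\|^2 \geq r_{\min}(T)^2 \, \sigma_{\min}(\widehat{T})^2 \|x\|^2,
\]
and the variational definition of the least singular value gives $\sigma_{\min}(T) \geq r_{\min}(T)\, \sigma_{\min}(\widehat{T})$. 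It therefore suffices to establish $\sigma_{\min}(\widehat{T}) \geq \bigl((d-1)/d\bigr)^{(d-1)/2} |\det \widehat{T}|$, since $|\det \widehat{T}| = |\det T|/\prod_i r_i(T)$.

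Let $\widehat{\sigma}_1 \geq \cdots \geq \widehat{\sigma}_d$ denote the singular values of $\widehat{T}$. Because the rows of $\widehat{T}$ are unit vectors, the Frobenius identity yields $\sum_{i=1}^d \widehat{\sigma}_i^2 = \|\widehat{T}\|_F^2 = d$. Applying AM-GM to just the first $d-1$ squared singular values,
\[
\prod_{i=1}^{d-1} \widehat{\sigma}_i^2 \leq \left( \frac{\sum_{i=1}^{d-1} \widehat{\sigma}_i^2}{d-1} \right)^{d-1} = \left( \frac{d - \widehat{\sigma}_d^2}{d-1} \right)^{d-1} \leq \left( \frac{d}{d-1} \right)^{d-1}.
\]
Combining with $\prod_{i=1}^d \widehat{\sigma}_i^2 = |\det \widehat{T}|^2$ produces $\widehat{\sigma}_d^2 \geq |\det \widehat{T}|^2 \bigl((d-1)/d\bigr)^{d-1}$, and substituting back into the row-normalization bound yields the row half of the claim.

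There is no serious obstacle: the argument is a clean combination of the variational characterization of $\sigma_{\min}$, an elementary AM-GM step, and the Frobenius-norm identity for a matrix with unit rows. The only delicate points to watch are (i) that AM-GM must be applied to exactly the top $d-1$ squared singular values — excluding $\widehat{\sigma}_d^2$ — so that $d - \widehat{\sigma}_d^2 \leq d$ produces the sharp factor $(d/(d-1))^{d-1}$, and (ii) that every step remains valid over $\mathbb{C}$ since $\|T x\|^2 = x^* \widehat{T}^*\, D^2\, \widehat{T} x$ with $D^2$ real diagonal, so no complex-conjugation issues arise.
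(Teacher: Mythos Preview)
The paper does not prove this theorem at all: it is quoted verbatim from \cite{H-P} and used as a black box to obtain \eqref{T-n-sing-value}. Your argument is correct and self-contained; the row-normalization $T=D\widehat{T}$ together with the Frobenius identity $\sum_i \widehat{\sigma}_i^2=d$ and AM--GM on the top $d-1$ squared singular values is in fact essentially the proof given in the original Hong--Pan paper, so there is nothing to add or correct.
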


We have $\mathrm{det} (T_n^{-1}) = 1 $ and using the fact that $|n|=1$
we obtain $r_{\min(T_n^{-1}) }/ \Pi_{i=1}^{d} r_i( T_n^{-1} )   = n_d  $. Now by virtue of (\ref{app-smallest-sing-value})
it follows that
\begin{equation}\label{T-n-sing-value}
\sigma_{\min}(T_n^{-1}) \geq \left( \frac{d-1}{d} \right)^{(d-1)/2} n_d.
\end{equation}
Hence, for $\lambda_{A_n}$, the ellipticity constant of the operator in (\ref{bdry-layer-system-halfspace2}),
we have by (\ref{ell-constant-B}) and (\ref{T-n-sing-value}) that
\begin{equation}\label{ell-const-of-A-n}
 \lambda_{A_n} \geq c_d \lambda_A n_d^2,
\end{equation}
where $c_d$ is a constant depending on the dimension, and $\lambda_A$ is the ellipticity constant of the original operator.
Invoking Corollary \ref{cor-Lions}
we let $w_n$ be the unique weak solution of (\ref{bdry-layer-system-halfspace2})
with finite $|| \cdot ||_{V_\tau(G)}$-norm for all $0<\tau< \frac{\lambda_{A_n}}{2 ||A_n||_\infty}$
where $G=(0,1)^{d-1}\times \R_+$.
Since the coefficients and the boundary data are smooth in
(\ref{bdry-layer-system-halfspace2}) it follows from the standard elliptic
regularity that $w_n \in C^\infty( \overline{\R^d_+}) $ (see e.g. \cite[Corollary 4.12]{Giaq} and \cite[Theorem 5.21]{Giaq}).
Moreover, we have by construction that $w_n(\cdot, z_d)$ is $ \Z^{d-1} $-periodic for each
$z_d \geq 0$ and has exponentially decaying gradient in the direction of $e_d$.
As $w_n$ solves (\ref{bdry-layer-system-halfspace2}) it follows that 
$v_n(y)=w_n(T_n^{-1} y)$ solves (\ref{oper-when-nu0-is-e-d}), where $y\in \overline{\Omega_n}$.
We now need to check that $v_n$ coincides with $v$ which was the solution to
(\ref{oper-when-nu0-is-e-d}) given by Theorem \ref{Thm-Prange}.
For that we will use the next lemma, which, as
well as the initial idea of exploiting layered structure of the problem were motivated by \cite{Neuss}.

\begin{lem}\label{Lem-sup-norms}
For $n\in \mathbb{S}^{d-1}$ satisfying $n_d >0$, let $v_n$ be
the solution to (\ref{oper-when-nu0-is-e-d}) constructed as above. Then 
$v_n \in C^\infty(\overline{\Omega_n}) \cap L^\infty(\overline{\Omega_n})$ and satisfies the following properties
\vspace{0.1cm}
\begin{itemize}
\item[{\normalfont{(a)}}]$ || \nabla v_n ||_{L^\infty  ( \{ y\cdot n >t) \} ) } \to 0, \qquad \text{as } t\to \infty$,
\vspace{0.2cm}
\item[{\normalfont{(b)}}]$ \int_0^\infty || (n\cdot \nabla) v_n ||^2_{L^\infty  ( \{ y\cdot n = t) \} ) } dt <\infty$.
\end{itemize}
\end{lem}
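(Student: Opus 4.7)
The construction realises $v_n(y) = w_n(T_n^{-1} y)$, where $w_n$ solves the half-space system (\ref{bdry-layer-system-halfspace2}) and, by Corollary \ref{cor-Lions}, satisfies $\nabla w_n \in V_\tau(G)$ for some $\tau > 0$. The idea is to convert the $L^2$-exponential decay of $\nabla w_n$ delivered by that corollary into the pointwise statements (a) and (b), by combining interior elliptic regularity for $w_n$ with the geometric correspondence induced by $T_n$. Smoothness of $w_n$ up to $\partial \R^d_+$ is already recorded in the construction, so $v_n \in C^\infty(\overline{\Omega_n})$. For the $L^\infty$-bound I would decompose $w_n = \widetilde{u} + \widetilde{g}$ as in the proof of Corollary \ref{cor-Lions}: the extension $\widetilde{g}$ is bounded by construction, while for $\widetilde{u} \in V_\tau$ the vanishing trace $\widetilde{u}(\cdot,0) = 0$ together with integration along $z_d$-lines and Cauchy-Schwarz against $e^{-\tau z_d}$ gives an $L^2$-bound on slices, which local elliptic regularity upgrades to a pointwise bound.

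A direct computation from (\ref{matrix-T-n})--(\ref{matrix-T-inverse}) yields the two identities
\[
y \cdot n = n_d z_d, \qquad \nabla_y v_n(y) = (T_n^{-1})^t \nabla_z w_n(z), \qquad z = T_n^{-1} y.
\]
Consequently the slab $\{y \cdot n > t\}$ corresponds bijectively to $\{z_d > t/n_d\}$, slices correspond to slices, and the two gradients are comparable up to a constant depending only on $n_d$. Thus (a) and (b) reduce to the two statements
\[
\|\nabla_z w_n\|_{L^\infty(\{z_d > s\})} \to 0 \text{ as } s \to \infty, \qquad \int_0^\infty \|\nabla_z w_n\|_{L^\infty(\{z_d = s\})}^2 \, ds < \infty.
\]

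Both are established via interior elliptic gradient estimates applied to $w_n$. Since $w_n$ solves a homogeneous elliptic system with smooth coefficients, for any $z_0$ with $(z_0)_d \geq 1$ one has $|\nabla_z w_n(z_0)|^2 \lesssim \int_{B_1(z_0)} |\nabla_z w_n|^2$; combined with $Y'$-periodicity, this gives the slice estimate $\|\nabla_z w_n\|_{L^\infty(\{z_d = s\})}^2 \lesssim \int_{Y' \times (s-1, s+1)} |\nabla_z w_n|^2$ for each $s \geq 1$. Bound (a) then follows by inserting the weight $e^{-2\tau(s-1)} e^{2\tau z_d}$ on the right-hand side and invoking the $V_\tau$-estimate from Corollary \ref{cor-Lions}, which forces the supremum to $0$ as $s \to \infty$. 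For (b), integrate the slice bound over $s \geq 1$ and apply Fubini to reduce to $\int_G |\nabla w_n|^2 < \infty$; the short interval $[0,1]$ is handled by the global $C^\infty$-smoothness of $w_n$ up to $\partial \R^d_+$ together with $Y'$-periodicity. The main technical step is precisely this transfer from the $L^2$-exponential decay delivered by Tartar's machinery to pointwise $L^\infty$-statements on affine slices; the degeneration of the ellipticity constant of $A_n$ as $n_d \to 0$, visible in (\ref{ell-const-of-A-n}), is harmless for fixed $n$, though it is reflected in all implicit constants.
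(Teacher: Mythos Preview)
Your proposal is correct and follows essentially the same route as the paper: reduce to statements about $w_n$ via the change of variables, then use interior Schauder/elliptic estimates to upgrade the $V_\tau$ control on $\nabla w_n$ to pointwise bounds on slices, treating the strip $\{z_d\le 1\}$ separately via boundary regularity. The only organizational difference is that the paper packages the interior step into the single pointwise bound $|\nabla w_n(z)|\le C e^{-\tau z_d}\|w_n\|_{V_\tau(G)}$, from which (a), (b), and the $L^\infty$ bound (by writing $w_n(z',z_d)=v_0(z',0)+\int_0^{z_d}\partial_t w_n(z',t)\,dt$) all follow in one line; your Fubini argument for (b) and your decomposition $w_n=\widetilde u+\widetilde g$ for the $L^\infty$ bound are equivalent but slightly less direct.
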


\begin{proof}
We have $v_n(y) = w_n(T_n^{-1} y)$ where  $y\in \overline{\Omega_n}$,
hence the up to the boundary smoothness of $v_n$
directly follows from that of $w_n$.

Let $G=(0,1)^{d-1}\times \R_+$ and fix some $\tau>0$ so that $w_n$ has finite  $V_\tau(G)$-norm.
Since $w_n$ solves (\ref{bdry-layer-system-halfspace2}),
where the coefficients and the boundary data have bounded $C^k$-norms for any $k\geq 0$,
by standard Schauder estimates near the boundary (see \cite[Theorem 5.21]{Giaq})
we have that $|\nabla_z w_n(z)|\leq C$ uniformly for all $z=(z', z_d) \in \R^{d-1}\times [0,\infty)$
satisfying $z_d\leq 1$. 
We now estimate $|\nabla_z w_n(z)|$ for $z=(z',z_d)\in \R^d_+$ with $z_d > 1$. By $\Z^{d-1}$-periodicity of $w_n(\cdot, z_d)$
we may assume that $z' \in (0,1)^{d-1}$. For $r>0$ let $K(z, r)$ be a closed cube
centred at $z$ and having side length $r$. In view of interior
Schauder estimates (see \cite[Theorem 5.19]{Giaq}) we have
\begin{equation}\label{Holder-est}
|| \nabla w_n ||_{L^\infty (K(z,1/4) )} \leq C  || \nabla w_n ||_{L^2(K(z,1/2))},
\end{equation}
with constant $C$ independent of $z$.
Set $K'(z,1/2)$ to
be the $(d-1)$-dimensional cube which is the projection of $K(z,1/2)$ onto $\R^{d-1}\times \{0\}$. We have
\begin{multline}\label{L2-norm-by-V-gamma}
|| \nabla w_n||_{L^2(K(z,1/2))}^2 = \int\limits_{ K'(z,1/2) } \int\limits_{ z_d - 1/2 }^{z_d+1/2} | \nabla w_n (x',x_d) |^2 dx' d x_d \leq \\
e^{\tau } e^{-2 \tau z_d } \int\limits_{ K'(z,1/2) } \int\limits_{ z_d - 1/2 }^{z_d+1/2}
|e^{\tau x_d} \nabla w_n (x', x_d) |^2 dx' d x_d \leq e^{\tau } e^{-2 \tau z_d } || w_n||_{V_\tau ( G )}^2,
\end{multline}
where we have used the periodicity of $w$ to get a bound in $V_\tau (G)$-norm.
From (\ref{L2-norm-by-V-gamma}) and (\ref{Holder-est}) we obtain
\begin{equation}\label{Holder-est-final}
|\nabla w_n(z) | \leq C e^{-\tau z_d} || w_n ||_{V_\tau(G)}, \qquad z\in \R^d_+,
\end{equation}
where we have also included the case of $z_d \leq 1$ in view of the uniform bound on the gradient.
Both assertions of the lemma follow directly from (\ref{Holder-est-final})
and the relation $\nabla_y v(y) = (T_n^{-1} )^t \nabla_z w_n(z)$, with $y=T_n z$
which is due to the change of variables formula.

Finally, by writing 
$$
w_n (z)=w_n(z', z_d) = v_0(z', 0) + \int_0^{z_d} \partial_t w_n (z', t) dt 
$$
and using (\ref{Holder-est-final}) we get $w\in L^\infty(\overline{\Omega_n})$,
and complete the proof of the lemma.
\end{proof}

By Lemma \ref{Lem-sup-norms}, $v_n$
gives a smooth and bounded solution to (\ref{oper-when-nu0-is-e-d}), and satisfies condition 1
of Theorem \ref{Thm-Prange}. But the solution with these properties is unique
according to Theorem \ref{Thm-Prange}.
Hence we have the following.

\begin{cor}\label{cor-solutions-coincide}
Fix $n\in \mathbb{S}^{d-1}$ such that $n_d > 0$, and assume that in (\ref{oper-when-nu0-is-e-d}) $A$ and $v_0$
are independent of $e_d$ and satisfy the usual
ellipticity, smoothness, and periodicity assumptions (A1)-(A3) of Section \ref{sec-Intro}.
Then the solution $v_n$ of (\ref{oper-when-nu0-is-e-d}) coincides
with the one given by Theorem \ref{Thm-Prange}.
\end{cor}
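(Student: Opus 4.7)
The plan is to argue by uniqueness: the solution $v_n$ built via the change of variables $y = T_n z$ and the exponentially-weighted Hilbert-space theory already satisfies every hypothesis that characterizes the Prange solution of Theorem \ref{Thm-Prange}, and Theorem \ref{Thm-Prange} asserts that such a solution is unique.

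More concretely, the steps are as follows. First, by construction, $w_n$ is the smooth, $\mathrm{Y'}$-periodic extension to $\R^d_+$ of the solution to (\ref{bdry-layer-system-halfspace3}) produced by Corollary \ref{cor-Lions}; hence $w_n$ solves (\ref{bdry-layer-system-halfspace2}) classically and smoothly up to $\partial \R^d_+$. Next, undoing the change of variables, i.e.\ setting $v_n(y) = w_n(T_n^{-1} y)$ for $y \in \overline{\Omega_n}$, and using the independence of $A$ and $v_0$ from $e_d$ together with the identities $A_n(T_n z) = A(z)$ and $v_0(T_n z) = v_0(z)$ already recorded before (\ref{bdry-layer-system-halfspace2}), we obtain a smooth function on $\overline{\Omega_n}$ that solves the boundary layer system (\ref{oper-when-nu0-is-e-d}).

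Second, Lemma \ref{Lem-sup-norms} establishes exactly the three additional qualitative properties needed to match Theorem \ref{Thm-Prange}: $v_n$ is bounded on $\overline{\Omega_n}$, the sup-norm of $\nabla v_n$ on the half-space $\{y \cdot n > t\}$ tends to $0$ as $t\to \infty$, and the $L^2$-in-$t$ condition on $\|(n\cdot\nabla)v_n\|_{L^\infty(\{y\cdot n = t\})}$ holds. Combined with the fact that $v_n|_{\partial \Omega_n} = v_0$, these say precisely that $v_n$ belongs to the class in which Theorem \ref{Thm-Prange}(1) asserts uniqueness for an irrational normal $n$.

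Third, invoking the uniqueness part of Theorem \ref{Thm-Prange} (which applies thanks to the hypothesis $n \notin \R\Q^d$), $v_n$ must agree with the Prange solution of (\ref{oper-when-nu0-is-e-d}). The boundary layer tail of $v_n$ is then automatically the one produced by Theorem \ref{Thm-Prange}.

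There is really no serious obstacle to this argument; essentially all the work was done in Corollary \ref{cor-Lions} and Lemma \ref{Lem-sup-norms}. The only point requiring a sentence of care is that the $V_\tau$-solution produced by Corollary \ref{cor-Lions} in the strip $\mathrm{Y'}\times \R_+$ is genuinely smooth up to $\partial \R^d_+$ (so that its periodic extension solves (\ref{bdry-layer-system-halfspace2}) in the classical sense and $v_n$ inherits smoothness up to $\partial \Omega_n$); this was already noted in the text by appealing to standard interior/boundary Schauder regularity for elliptic systems with smooth coefficients and smooth data. Once this is in place, the corollary is an immediate consequence of the uniqueness clause in Theorem \ref{Thm-Prange}.
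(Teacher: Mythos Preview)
Your proposal is correct and follows exactly the paper's own argument: invoke Lemma \ref{Lem-sup-norms} to verify that $v_n$ is smooth, bounded, and satisfies condition 1 of Theorem \ref{Thm-Prange}, then apply the uniqueness clause of that theorem. The paper states this reasoning in the paragraph immediately preceding the corollary and offers no separate proof.
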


From the properties of $w_n$ we now deduce an expansion for $v_n$.
For $n \in \mathbb{S}^{d-1}$ satisfying $n_d>0$ let $w_n$ be the solution to (\ref{bdry-layer-system-halfspace2}) constructed as above. 
Then due to the periodicity condition we have
\begin{equation}\label{expansion-of-w-n}
w_n(z)  = w_n(z', z_d) = \sum_{\xi' \in \Z^{d-1}} c_{\xi'} (n; z_d) e^{2\pi i \xi' \cdot z'} = 
\sum_{\xi \in \Z^{d-1} \times \{ 0 \} } c_{\xi} (n; z_d) e^{2\pi i \xi \cdot z},
\end{equation}
where $(z', z_d) \in \R^{d-1} \times [0,\infty)$ and for $\xi=(\xi',0) \in \Z^{d-1} \times \{0\}$ we let
\begin{equation}\label{F-coeff-of-w-n}
c_\xi (n; z_d) = \int_{\T^{d-1}} w_n(z', z_d) e^{-2\pi i \xi \cdot z} d z' 
\end{equation}
be the $\xi$-th Fourier coefficient of $w_n (\cdot, z_d)$. 
By the construction of $w_n$ for any $\xi \in \Z^{d-1}\times\{0\}$ we have
\begin{equation}\label{c-xi-is-constant-on-boundary}
 c_\xi(n; 0 ) = c_\xi (v_0), \ \  \forall n\in \mathbb{S}^{d-1} \text{ satisfying } n_d>0,
\end{equation}
where $c_\xi(v_0)$ is the corresponding Fourier coefficient of the fixed boundary data $v_0$
involved in (\ref{oper-when-nu0-is-e-d}). 
The definition of $T_n$ yields
$$
z_d= T_n^{-1} y \cdot e_d = y \cdot (T_n^{-1})^t e_d = \frac{y \cdot n}{n_d},
$$
and $z' = y'$. Using these relations between $y$ and $z$, from (\ref{expansion-of-w-n})
for the solution of $v_n$ of (\ref{oper-when-nu0-is-e-d})
we obtain
\begin{equation}\label{exp-of-v-case-of-ed}
 v_n(y)  = w_n (T^{-1}_n y) = \sum_{\xi \in \Z^{d-1} \times \{ 0 \} } c_{\xi} \left(n; \frac{y \cdot n}{n_d} \right)
 e^{2\pi i \xi \cdot y}, \ \ \ y\in \overline{\Omega_n}.
\end{equation}

Observe that in view of the smoothness of $w_n$ the function $t \longmapsto c_\xi(n; t) \in \R^N$
is smooth on $[0,\infty)$ for each $\xi \in \Z^{d-1} \times \{0\}$. What we show
next is a stability result with respect to normal vector $n$ for the derivative of this function.

\begin{lem}\label{Lem-main}
Fix $\delta>0$ small, and let $\nu, \mu \in \Ss^{d-1}$ satisfy $\nu_d, \mu_d \geq \delta$.
Then there exists a constant $C_\delta =C(\delta, A ) $ such that
for any $t\geq 0$ and all $\xi \in \Z^{d-1}\times\{0\}$ one has
\vspace{0.1cm}
\begin{itemize}
 \item[{\normalfont(a)}]  $\sum_{\xi\in \Z^{d-1} \times \{ 0\} } | \partial_t c_\xi (\nu; t) | <\infty $
 \vspace{0.2cm}
 \item[{\normalfont(b)}] $  | \partial_t  c_\xi(\nu; t) - \partial_t  c_\xi(\mu ; t)    | \leq C_\delta | \nu - \mu| \times || v_0||_{ C^2 (\mathbb{T}^d ) } $ 
\end{itemize}
\vspace{0.1cm}
where $c_\xi(n; t)$ is given by (\ref{F-coeff-of-w-n}).
\end{lem}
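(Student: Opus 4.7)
The plan is to study the difference $W := w_\nu - w_\mu$, which solves a boundary layer system whose right-hand side is in divergence form with an exponentially decaying coefficient, and then to upgrade a weighted energy estimate for $W$ to a pointwise bound on $\partial_{z_d} W$. Then Lemma \ref{Lem-main} follows from the identity
$$
\partial_t c_\xi(\nu;t)-\partial_t c_\xi(\mu;t)=\int_{\T^{d-1}}\partial_{z_d}W(z',t)\,e^{-2\pi i\xi\cdot z'}\,dz',
$$
so that $|\partial_t c_\xi(\nu;t)-\partial_t c_\xi(\mu;t)|\leq \|\partial_{z_d}W(\cdot,t)\|_{L^\infty(\T^{d-1})}$ uniformly in $\xi\in\Z^{d-1}\times\{0\}$.

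First, I would subtract the systems (\ref{bdry-layer-system-halfspace2}) for $w_\nu$ and $w_\mu$ to obtain
$$
-\nabla_z\cdot A_\nu(z)\nabla_z W(z)=\nabla_z\cdot\big[(A_\mu(z)-A_\nu(z))\nabla_z w_\mu(z)\big],\quad z\in G,
$$
with $W(z',0)=0$ and periodic in $z'$. The Lipschitz dependence $\|A_\nu-A_\mu\|_{C^k}\leq C_\delta|\nu-\mu|$ for each $k\geq 0$ follows from (\ref{coeff-A-n}) together with the explicit form of $T_n$, $T_n^{-1}$ in (\ref{matrix-T-n})--(\ref{matrix-T-inverse}), using only that the denominators $n_d$ are bounded below by $\delta$. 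Moreover, by (\ref{ell-const-of-A-n}) the ellipticity constants $\lambda_{A_\nu},\lambda_{A_\mu}$ are bounded below by $c_d\lambda_A\delta^2$, so we may fix a single $\tau=\tau(\delta)>0$ satisfying $\tau<\frac{\lambda_{A_n}}{2\|A_n\|_\infty}$ for both $n=\nu$ and $n=\mu$.

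Next, Corollary \ref{cor-Lions} applied to $w_\mu$ yields $\|w_\mu\|_{V_\tau(G)}\leq C_\delta\|v_0\|_{H^1(\mathrm{Y'})}$, so setting $F:=(A_\mu-A_\nu)\nabla w_\mu$ we have $\|e^{\tau z_d}F\|_{L^2(G)}\leq C_\delta|\nu-\mu|\|v_0\|_{H^1(\mathrm{Y'})}$. Theorem \ref{thm-Lions-bdry-layers} then gives the weighted energy bound
$$
\|W\|_{V_\tau(G)}\leq C_\delta|\nu-\mu|\,\|v_0\|_{H^1(\mathrm{Y'})}.
$$

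The main step--and the main obstacle--is the upgrade from this $V_\tau$ estimate to a pointwise bound on $\partial_{z_d} W(\cdot,t)$ that is uniform in $t\geq 0$. I would follow the scheme used in the proof of Lemma \ref{Lem-sup-norms}: for $t\geq 1$ use interior Schauder estimates on a cube $K((z',t),1/4)$ and for $0\leq t<1$ use the corresponding boundary Schauder estimates (zero Dirichlet data on the flat face). In either case one obtains
$$
|\nabla W(z',t)|\lesssim \|\nabla W\|_{L^2(K((z',t),1/2))}+\|F\|_{C^\alpha(K((z',t),1/2))}
$$
for some $\alpha\in(0,1)$. The first term is controlled via (\ref{L2-norm-by-V-gamma})--(\ref{Holder-est-final}) by $e^{-\tau t}\|W\|_{V_\tau(G)}\leq C_\delta|\nu-\mu|\|v_0\|_{H^1}$. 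For the second, $\|F\|_{C^\alpha}\leq C_\delta|\nu-\mu|\,\|\nabla w_\mu\|_{C^\alpha}$, and Schauder theory for $w_\mu$ (with smooth coefficients and data $v_0$) yields $\|\nabla w_\mu\|_{C^\alpha}\leq C_\delta\|v_0\|_{C^2(\mathbb{T}^d)}$, where the $C^2$ norm on the right is what appears in the statement. Combining these contributions and using the preceding identity for $\partial_t c_\xi$ gives the claimed inequality. Throughout, the delicate point is making the constants depend on $\nu$, $\mu$ only through the lower bound $\delta$, which forces the blow-up $C_\delta\to\infty$ as $\delta\to 0^+$ and ultimately reflects the fact that the analysis degenerates when the normal becomes tangential to the layered direction $e_d$.
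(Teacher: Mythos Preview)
Your proposal is correct and follows essentially the same route as the paper's proof: form $W=w_\nu-w_\mu$, derive the divergence-form equation with source $F=(A_\nu-A_\mu)\nabla w_\mu$, obtain a $V_\tau$ energy bound for $W$ via Theorem~\ref{thm-Lions-bdry-layers} and Corollary~\ref{cor-Lions}, and upgrade to a pointwise gradient bound by Schauder estimates, controlling $\|F\|_{C^\alpha}$ through the Schauder bound on $\nabla w_\mu$. Your explicit split into interior ($t\geq 1$) and boundary ($0\leq t<1$) Schauder estimates is slightly more careful than the paper, which only writes out the boundary case around $z_0\in\partial\R^d_+$, but otherwise the arguments coincide.
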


\begin{proof}
We start with part (a). From \eqref{expansion-of-w-n} we have that $\partial_t c_\xi (\nu; t)$ is the $\xi$-th Fourier coefficient
of $\partial_d w_n (\cdot, t)$, which is a smooth and $\Z^{d-1}$-periodic function by construction.
Hence, we have (a). 

In order to establish stability estimate (b),
observe that thanks to (\ref{F-coeff-of-w-n}) it suffices to prove stability of $\nabla w_n$
with respect to $n$. Recall the notation $G= \mathbb{T}^{d-1} \times \R_+ $, and
set $u=w_\nu - w_\mu$.
We get that $u$ is a smooth solution to 
\begin{equation}\label{system-for-difference}
\begin{cases} -\nabla \cdot A_\nu(z) \nabla u (z) = \nabla \cdot F (z)  ,&\text{  $ z \in \R^d_+   $}, \\
 u  (z',0)= 0 ,&\text{  $ z' \in \R^{d-1}  $},
\end{cases}
\end{equation}
where we have denoted $ F (z):=( A_\nu (z) - A_\mu( z)) \nabla w_\mu(z)$.
Observe that
$u(\cdot, z_d)$, as well as $F(\cdot, z_d)$ are periodic with respect to $\Z^{d-1}$ for
any $z_d \geq 0$. Also, due to the construction it follows that $u\in V_\tau(G)$
for some $\tau>0$ which will be specified in a moment.
Since solution to (\ref{system-for-difference}) is unique in the space $V_\tau$,
we may apply estimate (\ref{app-thm-Lions-norm-est}) of Theorem \ref{thm-Lions-bdry-layers}
and by so obtain
\begin{equation}\label{u-diff-gamma-norm}
|| u ||_{V_\tau ( G; \hspace{0.05cm} \R^{d\times N} )} \leq 
\frac{C}{\tau} \frac{ || e^{\tau z_d} F (z)   ||_{L^2(G; \hspace{0.05cm} \R^{d\times N})} }{\lambda_{A_\nu} -2\tau || A_\nu ||_{L^\infty(\R^d) }  },
\end{equation}
where in (\ref{u-diff-gamma-norm}) we are following notation of Theorem \ref{thm-Lions-bdry-layers}.
Using (\ref{relation-of-A-B}) and (\ref{matrix-T-inverse}) from the definition of $A_\nu$ 
we have $|| A_\nu ||_{L^\infty(\R^d)} \leq C ||A ||_{L^\infty (\mathbb{T}^d) } \delta^{-2}$.
The latter combined with (\ref{ell-const-of-A-n}) implies that 
$\tau = \frac{ \lambda_A }{4 || A ||_\infty} \delta^4  $
is a valid choice in (\ref{u-diff-gamma-norm}),
where $\lambda_A$ is the ellipticity constant
of the original operator in (\ref{oper-when-nu0-is-e-d}).
Thus we will keep in mind that we have a uniform control over $\tau $ in terms of the threshold $\delta$.
Next, by (\ref{coeff-A-n}) and (\ref{matrix-T-inverse}) we easily get
\begin{equation}\label{diff-of-B-s}
|| A_\nu(  z ) - A_\mu( z) ||_{L^\infty(\R^d)} \leq C_\delta | \nu - \mu |,
\end{equation}
which in combination with the choice of $\tau$ and (\ref{u-diff-gamma-norm})
infers
\begin{equation}\label{u-est-1}
|| u ||_{V_\tau ( G )} \leq  C_\delta | \nu - \mu | \times  || w_\mu||_{V_\tau (G)} \leq C_\delta |\nu  - \mu | \times  || v_0 ||_{C^1( \mathbb{T}^d )},
\end{equation}
where the second inequality in (\ref{u-est-1}) is due to (\ref{app-thm-bdry-layer-bdry-data-est}).

Now fix some $z_0 \in \partial \R^d_+$, and for $r>0$ denote by $\mathcal{K}(z_0, r)$
the intersection of a cube with side length $r$ and center at $z_0$ with $\R^d_+$.
By boundary Schauder estimates (see \cite[Theorem 5.21]{Giaq} and its proof) we have
\begin{equation}\label{u-Holder-1}
|| \nabla u  ||_{ C^{0,\sigma} ( \mathcal{K}( z_0,1/2 )) } \lesssim_\delta 
 || \nabla u  ||_{ L^2 (\mathcal{K}(z_0, 1 )) }     + || F ||_{ C^{0,\sigma } (\mathcal{K}( z_0, 1  )) } ,
\end{equation}
where $0<\sigma <1$ is any fixed parameter, and the dependence of the constant in the inequality on parameter $\delta$ comes from the
dependence of the ellipticity constant of $A_{\nu} $ on $\delta$. It is clear that
\begin{equation}\label{u-L2-1}
|| \nabla u  ||_{ L^2 ( \mathcal{K}( z_0, 1 )) }  \leq   || u  ||_{V_\tau(G)}.
\end{equation}
Next, using the definition of $F $ we have
\begin{equation}\label{est-on-F}
|| F  ||_{ C^{0,\sigma } (\mathcal{K}( z_0, 1 )) } \leq || A_\nu ( z ) - A_\mu ( z ) ||_{ C^{0,\sigma}
  ( \mathcal{K}(z_0,1 )  )  } || \nabla w_\mu ||_{ C^{0,\sigma} ( \mathcal{K}(z_0,1 )  ) } .
\end{equation}
The first factor in the right-hand side of (\ref{est-on-F}) is easily seen,
as in (\ref{diff-of-B-s}), to be bounded by
$C_\delta |\nu-\mu|$. For the second one, we do a recourse to the construction of $w_\mu$
in Corollary \ref{cor-Lions} and again using Schauder estimates at the boundary we get
$$
 || \nabla w_\mu ||_{ C^{0,\sigma} ( \mathcal{K}(z_0,1 )  ) }  \lesssim_\delta 
|| \nabla w_\mu ||_{L^2( \mathcal{K}(z_0,2))  )} + || A_\mu \nabla v_0 ||_{ C^{0,\sigma} ( \mathcal{K}(z_0,2 )  )  } +
|| v_0 ||_{C^2 ( \mathcal{K}(z_0,2 ) )  }.
$$
In the last expression we estimate the $L^2$-norm of the gradient of $w_\mu$ by $V_\tau$ norm,
which, on its turn, is controlled by (\ref{app-thm-bdry-layer-bdry-data-est}).
Getting back to (\ref{est-on-F}) we obtain
\begin{equation}\label{F-est-final}
 || F  ||_{ C^{0,\sigma } (\mathcal{K}( z_0, 1 )) } \leq C_\delta || v_0 ||_{C^2(\mathbb{T}^{d} )} |\nu - \mu| .
\end{equation}
We now use (\ref{F-est-final}), (\ref{u-L2-1}) and (\ref{u-est-1}) in (\ref{u-Holder-1})
to get 
\begin{equation}\label{u-est-final}
|| \nabla u  ||_{ L^\infty ( \mathcal{K}( z_0,1/2 )) } \leq C_\delta || v_0 ||_{C^2(\mathbb{T}^{d} )} |\nu - \mu|.
\end{equation}

The claim (b) of the lemma now follows directly by taking the derivative under the integral sign in (\ref{F-coeff-of-w-n})
and applying (\ref{u-est-final}). The proof is complete.
\end{proof}

\subsection{Boundary layer correctors}\label{sec-bdry-layer-corr}
For irrational direction $n\in \mathbb{S}^{d-1}$ satisfying $n\cdot \nu_0>0$, and for fixed $1\leq \gamma\leq d$ let $v_n^{*,\gamma}$
be the solution to (\ref{bdry-layer-system-for-v-star})
in a sense of Theorem \ref{Thm-Prange}.
Under assumption (A5) on the operator
we apply $\nu_0 \cdot \nabla$
on both sides of the system in (\ref{cell-problem}) and get that $\chi^{*,\gamma}$,
the solution to the cell-problem,
is also independent of $\nu_0$. We next fix a $d\times d$ matrix $T_0$
with integer entries such that $T_0 e_d = \nu_0$
and\footnote{For our arguments it is enough to have existence of the inverse of $T_0$
with rational entries,
however, it is useful to see that 
with a little extra work one may assure $\mathrm{det} T_0 =1$
provided the greatest common divisor of the components of $\nu_0$ equals one (see Claim \ref{claim-det-1}).
The latter can always  be assumed without loss of generality, as the condition (A5)
is invariant under scaling of $\nu_0$. 
The advantage of having $\det T_0=1$ lies in the fact that the inverse of $T_0$ will also have integer entries,
which ensures that all boundary layer correctors $v^{*,\gamma}_n$
defined in (\ref{exp-for-bdry-layer-corr}) remain periodic with respect to $\Z^{d-1}$ in tangential directions.}
$\mathrm{\det} T_0 \neq 0$.
Making a change of variables in (\ref{bdry-layer-system-for-v-star}) by setting $y=T_0 z$,
and observing that $y \cdot n = z \cdot T_0^t n$,
we transform the problem for boundary layer corrector to
\begin{equation}\label{bdry-layer-corr2}
\begin{cases} -\nabla_z \cdot \widetilde{A}(T_0 z) \nabla_z \widetilde{v}_n^\gamma (z)  =0 ,&\text{  $z  \in \Omega_{T_0^t n} $}, \\
 \widetilde{v}_n^\gamma (z) =  -\widetilde{\chi}^\gamma (z)   ,&\text{  $ z \in \partial \Omega_{T_0^t n}  $,}    \end{cases}
\end{equation}
where we have set $v_n^{*,\gamma}(T_0 z)= \widetilde{v}_n^\gamma (z) $, $\chi^{*,\gamma}(T_0 z) = \widetilde{\chi}^\gamma (z)$,
and the coefficients are being transformed as in Section \ref{subsec-algebra}.
By the formula (\ref{deriv-x-y-matrix-T}) we have
$$
\nu_0 \cdot \nabla_y  = T_0 e_d \cdot (T_0^{-1})^t \nabla_z= e_d \cdot \nabla_z,
$$
hence both the operator and the boundary data in (\ref{bdry-layer-corr2}) are independent of the $d$-th coordinate.
Moreover, as $T_0$ has integer entries, it follows that coefficients of (\ref{bdry-layer-corr2})
as well as the boundary data are periodic with respect to $\Z^d$.
It is also clear that by the irrationality of $n$ and the choice of $T_0$ we have $T_0^t n \notin \R \Q^d$.
Also, $\widetilde{v}_n^\gamma(z)$ is the solution of (\ref{bdry-layer-corr2}) in a sense of Theorem \ref{Thm-Prange}
if and only if $v_n^{*,\gamma} (T_0 z)$ is the solution to (\ref{bdry-layer-corr2})
in a sense of Theorem \ref{Thm-Prange}.
Finally noticing that $T_0^t n \cdot e_d = n\cdot \nu_0 >0  $,
in (\ref{bdry-layer-corr2}) we are now in a position to apply the analysis of Section \ref{e-d-is-nu-0}.
In particular, from (\ref{exp-of-v-case-of-ed}) we get that
$\widetilde{v}_n^\gamma$, the solution to (\ref{bdry-layer-corr2}), has the following expansion
$$
\widetilde{v}_n^\gamma (z) =   \sum_{\xi \in \Z^{d-1} \times \{ 0 \} } c_{\xi}^\gamma \left( T_0^t n; \frac{z \cdot T_0^t n}{T_0^t n \cdot e_d} \right)
 e^{2\pi i \xi \cdot z}, \ \ \ z\in \overline{\Omega_{T_0^t n}},
$$
where Fourier coefficients $c_\xi^\gamma$ are defined in analogy with (\ref{F-coeff-of-w-n}),
in particular we have 
\begin{equation}\label{a0}
c_\xi^\gamma (T_0^t n; 0) =c_\xi (- \chi^{*,\gamma} ), \qquad \xi\in \Z^{d-1}\times \{0\}. 
\end{equation}
Since $T_0^{-1} y \cdot T_0^t n= y \cdot n$ and $T_0^ t \cdot e_d = n \cdot \nu_0$ we finally get
\begin{equation}\label{exp-for-bdry-layer-corr}
 v_n^{*,\gamma} (y) =   \sum_{\xi \in \Z^{d-1} \times \{ 0 \} } c_{\xi}^\gamma \left( T_0^t n; \frac{y \cdot n}{ n \cdot \nu_0} \right)
 e^{2\pi i (T_0^{-1})^t \xi \cdot y}, \ \ \ y\in \overline{\Omega_n},
\end{equation}
for the solution of (\ref{bdry-layer-system-for-v-star}).

Clearly, the entire analysis remains valid for irrational directions $n$ satisfying
$n \cdot \nu_0<0$.

\vspace{0.2cm}

\begin{proof}[Proof of Theorem \ref{thm-Main}]
For $\tau>0$ set $S_{\tau,+}=\{n\in \mathbb{S}^{d-1}: \hspace{0.1cm} n\notin \R \Q^d, \hspace{0.1cm} n\cdot \nu_0 >\tau \}$,
and for $\xi\in \Z^d$ consider the function $v_{\xi}(y): = e^{2\pi i \xi \cdot y} I_N$, $y\in \R^d$, where $I_N\in M_N(\R)$
is the identity matrix. 

Let $n\in S_{\tau, +}$, and consider a boundary layer system (\ref{bdry-layer-system}) 
set on $\Omega_n$ and with boundary data $v_{\xi}$. Let $v_{\xi}^\infty \in M_N(\R)$
be the corresponding constant field given by Theorem \ref{Thm-Prange}.
The formula (\ref{form-of-v-infty}) for $v_{\xi}^\infty $ in view of (\ref{avg-stability-1})
and (\ref{avg-stability-2}) is reduced to
\begin{multline}\label{v-infty-reduced-in-the-proof}
 v_{\xi}^\infty(n) = \int_{\partial \Omega_n } \partial_{y_\alpha} G^{0}(n,y) d\sigma(y) \times \big[
 c_{-\xi} (A^{\beta \alpha}) n_\beta +\\
 c_{-\xi} ( \partial_{y_\beta} (\chi^{*,\alpha})^t A^{\beta \gamma} ) n_\gamma +  
 \mathcal{M}\{  \partial_{y_\beta}(v_n^{*,\alpha})^t e^{2\pi i \xi \cdot y } A^{\beta \gamma}    \}  n_\gamma \big],
\end{multline}
where $c_\xi$ is the $\xi$-th Fourier coefficient, and $v_n^{*,\alpha}$ is the solution to
(\ref{bdry-layer-system-for-v-star}).
Indeed, $A^{\beta \alpha}$ is an $N\times N$ matrix for each $\alpha, \beta$, 
hence $A^{\beta \alpha} v_\xi = A^{\beta \alpha} e^{2\pi i \xi \cdot y}$. Consequently, the Fourier spectrum of $A^{\beta \alpha} v_\xi$ equals the Fourier spectrum of $A^{\alpha \beta}$
shifted by $- \xi\in \Z^d$, in particular we get $c_0(A^{\beta \alpha} (y) e^{2\pi i \xi \cdot y}) =c_{-\xi}(A^{\beta \alpha}) $.
The same argument applies to the second term in the brackets in (\ref{v-infty-reduced-in-the-proof}),
hence the reduction of (\ref{form-of-v-infty}) to (\ref{v-infty-reduced-in-the-proof}) follows.

To treat the term in (\ref{v-infty-reduced-in-the-proof}) involving boundary layer corrector 
we will apply Corollary \ref{cor-to-lem-quasi-per}.
Since $v_n^{*,\alpha}$ is smooth up to the boundary of $\Omega_n$ and has expansion (\ref{exp-for-bdry-layer-corr})
it follows that $\partial_{y_\beta}(v_n^{*,\alpha})$ has a similar expansion into exponentials
obtained from term by term differentiation of the series in (\ref{exp-for-bdry-layer-corr}).
Thus, if $\mathfrak{a}_{n}^{ \alpha \beta } (\eta; t)$, for $t\geq 0$ and $\eta\in \Z^d$, denotes the $\eta$-th coefficient of $\partial_{y_\beta}(v_n^{*,\alpha})$
for $t= y\cdot n$, we get
\begin{equation}\label{coeff-deriv}
 \mathfrak{a}_{n}^{ \alpha \beta } (\eta; \hspace{0.05cm} y\cdot n ) =  \frac{n_\beta}{n \cdot \nu_0} (\partial_t c_\eta^{\alpha}) \left(  T_0^t n; \frac{y\cdot n}{n\cdot \nu_0}  \right) 
 + 2\pi i  (T_0^{-1})^t \eta \cdot e_\beta \hspace{0.05cm} c_\eta^\alpha \left(  T_0^t n ; \frac{y\cdot n}{n\cdot \nu_0}  \right) ,
\end{equation}
for all $y\in \overline{\Omega_n}$, where $e_\beta$ is the $\beta$-th vector in the standard basis of $\R^d$.
But recall, that $\mathcal{M}$-averages are understood for restrictions of functions on the boundary of
halfspace $\Omega_n$ (see the discussion after \eqref{form-of-v-infty}). Hence, in order to be able to apply Corollary \ref{cor-to-lem-quasi-per}
for the set of parameters $S_{\tau,+}$ we need only to consider dependence of (\ref{coeff-deriv})
on $n$ for $y\cdot n =0$. 

Observe that from (\ref{exp-for-bdry-layer-corr}) we have $\mathfrak{a}_{n}^{\alpha \beta} (\eta; \hspace{0.05cm} t)\equiv 0$ for any $\eta \in \Z^d$
with $\eta_d \neq 0$.
Thanks to (\ref{a0}) we get
\begin{equation}\label{coeff-deriv0}
 \mathfrak{a}_{n}^{ \alpha \beta } (\eta; 0 ) =  \frac{n_\beta}{n \cdot \nu_0} (\partial_t c_\eta^\alpha) (  T_0^t n; 0) 
 + 2\pi i (T_0^{-1})^t \eta \cdot e_\beta \hspace{0.05cm} c_\eta (- \chi^{*,\alpha} )  ,
\end{equation}
thus we need to check the following two conditions in order to apply Corollary \ref{cor-to-lem-quasi-per},
namely for any $1\leq \alpha, \beta \leq d$ we must have
\begin{align}
   \label{c1} \sum_{\eta \in \Z^{d-1} \times \{0\} } | \mathfrak{a}_{n}^{ \alpha \beta } (\eta; 0 ) | &< \infty \text{ for all } n\in S_{\tau, +},   \\
   \label{c2} \sup_{\eta \in \Z^{d-1} \times \{0\}}|\mathfrak{a}_{n^{(1)}}^{ \alpha \beta } (\eta; 0 ) - \mathfrak{a}_{ n^{(2)} }^{ \alpha \beta } (\eta; 0 ) | &\leq C_0 |n^{(1)} - n^{(2)}|   
   \text{ for any } n^{(1)}, n^{(2)} \in S_{\tau, +} \ .
\end{align}
Due to the choice of $T_0$ we have $T^t_0 n \cdot e_d = n \cdot T_0 e_d = n\cdot \nu_0 > \tau$ for all $n\in S_{\tau, +}$.
Using this we apply Lemma \ref{Lem-main} part (a) and from (\ref{coeff-deriv0}) and the smoothness of $\chi^{*,\alpha}$ - the solution to cell-problem,  we obtain \eqref{c1}.
Next, by Lemma \ref{Lem-main} part (b) and (\ref{coeff-deriv0}) we arrive at \eqref{c2}.

Applying Corollary \ref{cor-to-lem-quasi-per} with the smooth function $e^{2\pi i \xi \cdot y} A^{\beta \gamma}$
and for each parameter $n\in S_{\tau, +}$ choosing the set of coefficients $\{\mathfrak{a}_{n}^{ \alpha \beta } (\eta; 0 ) \}_{\eta \in \Z^{d-1}\times \{0\}}$
(as the coefficients of the expansion for functions in Corollary \ref{cor-to-lem-quasi-per}),
we obtain that the mapping
$n\longmapsto \mathcal{M}\{  \partial_{y_\beta}(v_n^{*,\alpha})^t e^{2\pi i \xi \cdot y } A^{\beta \gamma}    \}$
is Lipschitz continuous on $S_{\tau, +}$ with Lipschitz constant bounded by a constant $C_\tau=C (\tau, A, d)$,
independent of $n$ and $\xi$. Finally, combining this with Lemma \ref{Lem-Green-reg-systems},
from (\ref{v-infty-reduced-in-the-proof}) we get 
\begin{equation}\label{const-field-is-Lip}
|v_{\xi}^\infty(n) - v_{\xi}^\infty(\nu ) | \leq C_\tau |n-\nu|, \qquad n, \nu \in S_{\tau, +},
\end{equation}
where $C_\tau$ is independent of $\xi$.

For $\tau>0$ define $D_{\tau, +} = \{x\in \partial D: \hspace{0.1cm} n(x) \notin \R \Q^d, \hspace{0.1cm} n(x) \cdot \nu_0>\tau \}$,
where $n(x)$ is the normal inward vector of $\partial D$ at $x$.
Following the notation of Section
\ref{sec-contr-of-g}, for any $x,y\in \partial D_{\tau, +} $ by (\ref{g-as-a-series}) we have
\begin{multline}\label{g*-diff}
| g^*(x) - g^*(y) |  \leq \sum_{\xi \in \Z^d} | g_\xi^*(x)  - g_\xi^* (y) | \leq
 \sum_{ \xi \in \Z^d} |c_\xi (x) | \times | v_{\xi}^\infty(n(x))  - v_{\xi}^\infty(n(y))  | +  \\
\sum_{\xi \in \Z^d} | v_{\xi}^\infty(n(x)) | \times | c_\xi(x) - c_\xi(y) |  =: 
\Sigma_1 + \Sigma_2.
\end{multline}
Recall that $c_\xi (x) = \int_{\mathbb{T}^d} g(x, z) e^{-2\pi i \xi \cdot z} dz $, where $\xi\in \Z^d$, $x\in \partial D$.
Fix a non-zero $ \xi=(\xi_1,...,\xi_d) \in \Z^d$ and let $|\xi_\alpha| = \max_{1\leq \beta \leq d} |\xi_\beta|$.
Let also $\partial_{2,\alpha}^{d+1}$ be the partial differentiation operator acting
on $g(x, \cdot)$ $(d+1)$-times in the $\alpha$-th coordinate. Using the smoothness of $g$,
from the definition of $c_\xi(x)$ we get
$$
 c_\xi(x) = \frac{1}{(-2\pi i \xi_\alpha)^{d+1}} \int_{\mathbb{T}^d} \partial_{2,\alpha}^{d+1} g(x, z) e^{-2\pi i \xi \cdot z} dz.
$$
Combining this with a uniform bound on $|c_0(\cdot)|$ we get
\begin{equation}\label{coeff-est-1}
 |c_\xi(x)|\lesssim_g (1+|\xi|)^{-(d+1)} \text{ uniformly in } x\in \partial D \text{ and } \xi \in \Z^d.
\end{equation}
As $g$ is smooth with respect to both of its variables, in a similar way we obtain
\begin{equation}\label{coeff-est-2}
 |c_\xi(x)-c_\xi(y)|\lesssim_g (1+|\xi|)^{-(d+1)} |x-y| ,
\end{equation}
for all $x,y\in \partial D$ and non-zero $\xi\in \Z^d$. Using (\ref{coeff-est-1}) and (\ref{const-field-is-Lip})
for any $x,y \in \partial D_{\tau, +}$ we get 
$\Sigma_1 \leq C_{g, \tau} |n(x) - n(y)| \leq C_{g, \tau} |x-y|$,
where we have used the smoothness of $\partial D$ to obtain the second inequality.
In a similar vein, in $\Sigma_2$ using Lemma \ref{lem-bddness of boundary tail}
to bound the constant field and employing (\ref{coeff-est-2}) leads to
$\Sigma_2 \leq C_g |x-y|$.
The estimates for $\Sigma_1$ and $\Sigma_2$ applied to (\ref{g*-diff}) show that $g^*$
is Lipschitz on $\partial D_{\tau, +}$.

Obviously, the same argument works for the other
hemisphere $S_{\tau, -}=\{n\in \mathbb{S}^{d-1}: \ n\notin \R \Q^d, \  n\cdot \nu_0 < - \tau \}$
as well. The proof of the Theorem is now completed.
\end{proof}

\subsection{Concluding remarks}\label{sec-concluding}
Results concerning regularity of boundary layer tails are very few in the literature.
In the same setting as we have here, namely second order divergence type elliptic systems,
the smoothness of $g^*$ under restrictive condition (\ref{div-free-condition}) on the coefficients
and for $d\geq 3$, was established by H. Shahgholian, P. Sj\"{o}lin,
and the current author in \cite{ASS2} as an outcome of methods of \cite{ASS2} and \cite{KLS}
(see  formula (4.4) in \cite{ASS2}, and the discussion after that).
It is easy to see that we recover this result for $g^*$ from the proof of Theorem \ref{thm-Main} above.
Namely, the condition (\ref{div-free-condition}) implies that solutions to cell-problem (\ref{cell-problem}),
and hence to boundary layer systems (\ref{bdry-layer-system-for-v-star}), are trivial.
This in its turn shows that in formula (\ref{v-infty-reduced-in-the-proof})
the last average is vanishing, and we get that the boundary layer tail,
as a function of normal $n$, equals to a $C^\infty$ function almost everywhere on the sphere.
The rest of the proof proceeds with minor modifications. 
In dimension two, the smoothness of $g^*$ is new, while for $d\geq 3$
we get an alternative proof of the mentioned result from \cite{ASS2}.

Concerning other settings, the reader may consult a recent work by Feldman and Kim \cite{Feldman-Kim},
and the references therein,
where they analyse continuity properties of boundary layer tails associated with fully nonlinear
uniformly elliptic equations of second order.

Getting back to our case, one can see from the analysis above that the main obstacle
towards the regularity of $g^*$ comes from boundary layer correctors,
in particular we do not know if the behaviour of boundary layer tails $v^\infty(n)$
is in any sense uniform with respect to normals $n$.
A specific instance of this non-uniformity is the convergence speed of boundary layer correctors
to their corresponding tails away from the boundary.
Concerning this aspect in \cite{A-slow}
we show that given any one-to-one, continuous function decreasing to 0 at infinity
(i.e. a convergence rate), one may construct a problem of form (\ref{bdry-layer-system})
with smooth data, so that convergence towards boundary layer tail is slower than the given rate in advance.
This in particular indicates that approaches toward regularity of $g^*$
based on controlling the speed of convergence of the tails, are unlikely to lead to a positive conclusion.

It is also interesting to observe (in the
light of Example \ref{example-Laplace}) that condition (A5)
implies that the operator only ``sees'' Diophantine directions 
on the hemispheres considered in the proof of Theorem \ref{thm-Main}.
It thus leads to an idea that one may try to tailor the Diophantine condition of \cite{GM1}
to the given operator. Developing this line it seems plausible that 
one should be able to 
deduce the claim of Theorem \ref{Thm-our2} (although without any structural results
such as expansion (\ref{exp-for-bdry-layer-corr})) using instead methods of \cite{GM1}
combined with some of the ideas considered here, in particular Lemma \ref{lem-smooth-rotation} and the proof
of Theorem \ref{thm-Main}. 
In this perspective the approach of Section \ref{Sec-proof} should be seen as a more transparent alternative
to some of the methods of \cite{GM1} under condition (A5), and it will be interesting to see
if the ideas considered here can be developed to lead to an actual homogenization
of the problem (\ref{problem-osc-oper})-(\ref{prob-Dir-data-osc}) under conditions (A1)-(A5).

\appendix

\section{On the choice of transformation matrices}

This appendix contains two results concerning the choice of transformation matrices used
in Seciton \ref{sec-Green} and subsection \ref{sec-bdry-layer-corr},
which can be useful in further refining and extending the analysis of the present paper.

\subsection{Smooth rotations}\label{sub-rot-Lip}
Here we analyse the choice of orthogonal matrices $M$ sending $e_d$ to $n\in \mathbb{S}^{d-1}$ considered in Section \ref{sec-Green}.
The main purpose is to show, in a constructive fashion, that in a neighbourhood of a given $n\in \mathbb{S}^{d-1}$ there is a possibility of selection of these matrices
varying smoothly with $n$. Interestingly such a smooth selection
globally on $\mathbb{S}^{d-1}$
in general dimensions is not available due to topological obstructions discussed below.
Availability of such a choice can be used, for example, in the analysis of regularity with respect to normals $n$ of Green's matrices $G^{0,n}$
studied in Section \ref{sec-Green} (see Claim \ref{claim-Green-relat} for the change in the coefficient field introduced by $M$).

Recall that for each $n\in \mathbb{S}^{d-1}$ we choose a matrix $M \in \OO(d)$ such that $M e_d = n$.
Such $M$ is clearly of the form $M=[N|n]$, where $N$ is
$d\times (d-1)$ matrix  with the property that its columns form an orthonormal basis in the tangent space of $\mathbb{S}^{d-1}$
at the point $n$, in particular $M$ is defined modulo group $\OO(d-1)$. 
From this we see that the existence of orthogonal matrices $M$
sending $e_d$ to $n$ and varying smoothly with $n$
is equivalent to existence of a family of smooth vector fields $\{ \textbf{v}_1(n),...,\textbf{v}_{d-1}(n) \}_{n\in \mathbb{S}^{d-1}}$
that will form an orthonormal basis in the tangent space of $\mathbb{S}^{d-1}$ at any point $n$.
The existence of the desired vector fields, however,
is false in general\footnote{For $\R^d$, with $d\geq 3$ odd, the non-existence directly follows from Hairy Ball Theorem,
which states that there is no non-vanishing continuous, let alone smooth, tangent vector field on even-dimensional spheres.}.
Let us very briefly give some details and background on this matter. 

A $C^\infty $-manifold $X$ of dimension $d\geq 1$ is called \emph{parallelizable} if there exist
smooth vector fields $\{\textbf{v}_1(x),...,\textbf{v}_d(x) \}_{x\in X}$, such that at each point $x\in X$
the $d$-tuple $\{\textbf{v}_i(x)\}_{i=1}^d$ forms a basis in the tangent space of $X$
at $x$. It is well-known that a manifold is parallelizable if and only if its tangent bundle is trivial.
On the other hand the tangent bundle of the sphere $\mathbb{S}^{d-1}$ is trivial if and only if
$d=1,2,4,8$. We refer an interested reader to works by Bott, Kervaire, and Milnor \cite{Bott-Milnor}, \cite{Kervaire-Milnor}
for details and proofs.
 Notice, that parallelizabilty does not require the basis to be orthonormal,
nonetheless, it follows directly that when 
$d\notin \{1,2,4,8 \}$ one cannot fix a family of orthogonal matrices,
such that $M e_d= n$, and $M$ varies smoothly with respect to $n$ \emph{globally} on $\mathbb{S}^{d-1}$.
However, the existence of these smooth fields locally,
in the neighbourhood of each point $n\in \mathbb{S}^{d-1}$ is true, for which 
we give an elementary, self-contained constructive proof in the next lemma.

\begin{lem}\label{lem-smooth-rotation}{\normalfont{(Smooth selection of rotations)}}
Fix any point $p=(p_1,...,p_d) \in \mathbb{S}^{d-1}$. Then, there exists an open neighbourhood $\mathcal{P}\subset \mathbb{S}^{d-1}$
of $p$, and an assignment $n \longmapsto M_n $ from $\mathcal{P}$ into $\mathrm{O}(d)$ 
such that for all $n \in \mathcal{P}$ we have $M_n e_d = n$, and
for each $1\leq i, j\leq d$, the real-valued function $(M_n)_{ij}$ is $C^\infty$ on $\mathcal{P}$.
\end{lem}
\begin{proof}
The proof is by induction on dimension $d$. 
Assume that $p_1 \neq 0$, and fix a neighbourhood $\mathcal{P}$ of $p$
on $\mathbb{S}^{d-1}$ where $|n_1|> |p_1|/2$, for all $n\in \mathcal{P}$. Otherwise, if $p_1=0$
one may simply permute the coordinate system so that after the permutation the first coordinate of $p$ is non-zero. 
Thus there is no loss of generality in assuming that $p_1 \neq 0$.

Let $d\geq 2$ be fixed. To a given $n=(n_1,...,n_d) \in  \mathcal{P}$ we wish to assign a $d\times d$ matrix
$X_d(n)$ of the form
\begin{equation}\label{X-matrix}
 X_d(n) = \left(
\begin{array}{lllll}
a_1(n) & \ast & ... &   \ast & n_1\\
 & \cdots  &   & 1 & n_2\\
\vdots &   \ast & \text{\reflectbox{$\ddots$}} &  & \vdots \\
a_{d-1}(n) & 1 &   & \text{\Huge0} & n_{d-1}\\
1 & 0 & \cdots & 0 & n_d
          \end{array}
  \right)
\end{equation}  
where the opposite main diagonal is identically one except the element on the first row;
also, with the exception of the $(d,d)$-th element everything below the opposite main diagonal is identically zero,
and the rest of the elements above that diagonal are chosen so that
to have the following properties:  

\begin{doublespace}
\begin{itemize}
 \item[(1)] $X_d (n) e_d = n$,
 \item[(2)] all columns of $X_d(n)$ are pairwise orthogonal to each other,
 \item[(3)] for any $1\leq i,j \leq d$, the real-valued function $n\longmapsto [X_d(n)]_{ij}$ is smooth on $\mathcal{P}$,
where $[X_d(n)]_{ij}$ is the $(i,j)$-th element of the matrix $X_d(n)$.
\end{itemize}
\end{doublespace}

Start with $d=2$, and let $n=(n_1,n_2) \in \mathcal{P}$ be any. Consider the matrix
$X_2 (n) =\left(
      \begin{array}{cc}
        a_1(n) & n_1 \\
        1 & n_2 \\
      \end{array}
    \right)
$, where we have chosen $a_1(n)=-n_2/n_1$. Obviously $X_2 (n)$ is of the form (\ref{X-matrix}), and satisfies properties (1)-(3) listed above.
Now assume we have this construction for dimension $d-1$, and let us construct for $d$.  
We take $n=(n_1,...,n_d) \in \mathcal{P}$, and set
$$
 X_d(n) =  \left(
      \begin{array}{ccccc}
        a_1(n) &   &  & & \\
         \vdots & & & & \\
	 & & \text{\Large $X_{d-1}(n_1,...,n_{d-1})$} & & \\
	 a_{d-1}(n) & & & & \\
	 1 & 0 & \cdots  & 0 & n_d \\
      \end{array}
    \right),
$$
where the vector field $A_d(n):= (a_1(n),...,a_{d-1}(n)) $ will be chosen in a moment.
By our construction and inductive hypothesis we have that $X_d(n)$ is of the form (\ref{X-matrix}), whatever the choice of the field $A_d$ is,
and hence in particular, condition (1) above is automatically satisfied.
Again in view of the inductive hypothesis and the construction of $X_d(n)$,
starting from the second one all $(d-1)$ columns of $X_d(n)$
satisfy (2) and (3). It is left to determine the field $A_d(n)$. Observe that 
the first column of $X_d(n)$ is orthogonal to the rest of $(d-1)$ columns if and only if $A_d(n)$ satisfies
\begin{equation}\label{system-for-A}
A_d(n) X_{d-1}(n_1,...,n_{d-1}) = (0,...,0,-n_d) \in \R^{d-1},
\end{equation}
where we have treated $A_d(n)$ as a row-vector.
On one hand for each fixed $n\in \mathcal{P}$, (\ref{system-for-A}) is a system of linear equations
with respect to unknowns $(a_1,...,a_{d-1})$,
and with matrix of coefficients equal to $X_{d-1}$. On the other hand, by inductive hypothesis we have that all columns of $X_{d-1}$
are pairwise orthogonal, moreover, by (\ref{X-matrix}) we see that all columns of $X_{d-1}$ considered as $(d-1)$-dimensional vectors
have lengths uniformly bounded away from zero when $n \in \mathcal{P}$.  
This, in particular, shows that the determinant of $X_{d-1}$,
which in this case will be the product of the lengths of its column-vectors in view of the orthogonality
condition, will stay away from zero uniformly as $n\in \mathcal{P}$. 
We thus conclude that the system (\ref{system-for-A}) is uniquely solvable for all $n \in \mathcal{P}$,
and solutions are smooth functions in $n$ due to inductive hypothesis applied to $X_{d-1}$,
and Cramer's rule concerning systems of equations. 
All properties (1)-(3) are now fulfilled, and inductive step is completed.

It is now left to normalize each column of $X_d(n)$ to unit length, so that to get an orthogonal matrix.
For each $n \in \mathcal{P}$ we let $M_n$ be the matrix obtained from $X_d(n)$ where we divide all elements on the given column
of $X_d$ by the Euclidean length of that column-vector.
It is important to observe, that the last column of $X_d(n)$, which is the vector $n$, is of unit length, thus it will remain unchanged
leaving the condition of sending $e_d$ to $n$ unaltered. The rest of 
all other $d-1$ columns of $X_d(n)$ have length at least one, hence this normalization will not affect the smoothness of 
the individual components of the matrix.
It now follows that the mapping $\mathcal{P} \ni n \longmapsto M_n \in \OO(d)$ satisfies all requirements of the lemma.
The proof is complete.
\end{proof}

\subsection{An element of $\mathrm{SL}(d, \Z)$ with prescribed column} 
We show that the integer matrix $T_0$ which was used to transform $e_d$ to the given vector $\nu_0$
in subsection \ref{sec-bdry-layer-corr}
can be chosen satisfying $\det T_0 =1$, implying that its inverse also has integer elements.
This fact can be used to get periodicity in tangential directions of boundary layer correctors considered in (\ref{exp-for-bdry-layer-corr}).
For given integers $(a_1,...,a_d)\in \Z^d$ we denote by $[a_1,...,a_d] $ their greatest common divisor.
We also recall a standard notation $\mathrm{SL}(d,\Z)$ for the \emph{special linear group} over integers.
\begin{claim}\label{claim-det-1}
For any non-zero $a=(a_1,...,a_d)\in \Z^d$ such that $[a_1,...,a_d] =1$ there exists
$T\in \mathrm{SL}(d, \Z)$ satisfying $T e_d = a$.
\end{claim}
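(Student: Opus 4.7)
The plan is to induct on $d$, with base case $d=2$: given $[a_1,a_2]=1$, Bezout's identity supplies $p,q\in\Z$ with $pa_2-qa_1=1$, and then $T=\begin{pmatrix}p&a_1\\q&a_2\end{pmatrix}\in\mathrm{SL}(2,\Z)$ has $Te_2=a$.

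For the inductive step I would peel off the last coordinate. Set $g:=[a_1,\ldots,a_{d-1}]$; the hypothesis $[a_1,\ldots,a_d]=1$ forces $[g,a_d]=1$. The degenerate case $g=0$ gives $a_d=\pm 1$ and $a_1=\cdots=a_{d-1}=0$, which is handled by a direct diagonal ansatz such as $\mathrm{diag}(a_d,1,\ldots,1,a_d)$. Assuming $g\neq 0$, write $a_i=gb_i$ for $i<d$; then $[b_1,\ldots,b_{d-1}]=1$, so the inductive hypothesis produces $B\in\mathrm{SL}(d-1,\Z)$ whose last column is $b:=(b_1,\ldots,b_{d-1})^t$. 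Bezout also furnishes $u,v\in\Z$ with $ug+va_d=1$.

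Splitting $B=[B'\,|\,b]$ with $B'$ of size $(d-1)\times(d-2)$, I would then form the $d\times d$ integer matrix
\[
T=\begin{pmatrix} B' & vb & gb \\ 0 & -u & a_d \end{pmatrix},
\]
whose last column is $a$ by construction. Expanding along the bottom row and using linearity of the determinant in the $(d-1)$-th column, together with $\det[B'\,|\,b]=\det B=1$ from the inductive hypothesis, the calculation collapses to $\det T=(ug+va_d)\det B=1$, so $T\in\mathrm{SL}(d,\Z)$. The only real obstacle is pinning down the block ansatz for the $(d-1)$-th column so that Bezout and the inductive hypothesis interlock cleanly in the cofactor expansion; once the pair $(vb,-u)$ is in place, the verification is essentially a one-line check.
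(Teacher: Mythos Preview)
Your proof is correct. The determinant calculation goes through exactly as you say: expanding along the bottom row, the only nonzero contributions are $(-u)\cdot(-1)^{2d-1}\det[B'\,|\,gb]+a_d\cdot(-1)^{2d}\det[B'\,|\,vb]=ug\det B+va_d\det B=ug+va_d=1$, and the degenerate case $g=0$ is handled cleanly by your diagonal ansatz.

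The paper's argument is also induction plus B\'ezout, but it organizes the reduction differently: rather than taking $g=[a_1,\ldots,a_{d-1}]$ and applying the inductive hypothesis to $(b_1,\ldots,b_{d-1})$, it replaces the last two entries by their gcd, applies the inductive hypothesis to $(a_1,\ldots,a_{d-2},[a_{d-1},a_d])$, and then splits that gcd back open via $[a_{d-1},a_d]=xa_{d-1}+ya_d$ to fill in the bottom two rows. The paper only writes out $d=3$ in detail and leaves the general step as a sketch. Your block-matrix ansatz with the explicit $(d-1)$-th column $(vb,-u)^t$ is tidier: it avoids the ``choose $\ast_1,\ast_2$ so that the cofactor comes out right'' step, and the B\'ezout identity and inductive hypothesis interlock in a single cofactor expansion with no further choices to make. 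Both routes are equally elementary; yours is just more explicit.
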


\begin{proof}
Before we start, observe that the condition on greatest common divisor to be 1
is necessary which trivially follows from Euclid's algorithm.

It is enough to consider the case of $a\in \Z^d$ having at least one of its last two coordinates non-zero.
Indeed, assume the claim holds for that class of $d$-tuples, and take any $a\in \Z^d$ with $a_{d-1}=a_{d}=0$ and satisfying the condition of the claim.
Then, we necessarily have $d\geq 3$, and hence can fix $1\leq i \leq d-2$ such that
$a_i \neq 0$. Clearly, one of the transpositions $(i,d-1)$ or $(i,d)$ is even.
Assume the second one, and consider $\widetilde{a} \in \Z^d$ which is obtained from $a$ by swapping $i$-th coordinate with $d$-th,
and keeping the rest unchanged. Now, if $\widetilde{T}$ is the matrix for $\widetilde{a}$ satisfying the claim, then
the matrix $T$ obtained from $\widetilde{T}$ by swapping its $i$-th row with $d$-th  satisfies $\mathrm{det} T = \mathrm{det} \widetilde{T}=1$, since the transposition $(i,d)$ was even.
Thus $T$ satisfies the claim for the original $a\in \Z^d$. Given this, we will only consider $a\in \Z^d$
satisfying $|a_{d-1}| + |a_d|>0$.

Next, we claim that for each $d\geq 2$ and any $a=(a_1,...,a_d) \in \Z^d$ with $|a_{d-1}|+|a_d|>0$,
and $[a_1,...,a_d]=1$
there exists a matrix $T\in \mathrm{SL}(d, \Z) $ having $a$ as its last column and such that all elements of $T$
above the main diagonal, except possibly on the last column, are 0.
The proof of this statement proceeds by induction on $d$. 

The case of $d=2$ follows directly from Euclid's algorithm.
Now assume the induction hypothesis holds for $d\geq 2$, and take $a\in \Z^{d+1}$
such that at least one of its last two coordinates is non-zero and $d+1$ coordinates of $a$ are coprime.
Obviously, the inductive hypothesis applies to $(a_1,..., a_{d-1}, [a_d, a_{d+1}]) \in \Z^d$
and we let $T^{(0)}$ be the corresponding $d \times d$ matrix. 
By Euclid's algorithm there are $x,y\in \Z$ such that $[a_d, a_{d+1} ] = x a_d + y a_{d+1}$.
Clearly $[x,y]=1$. Now, consider a $(d+1)\times (d+1)$ matrix $T$ of the form
\begin{equation}
 T =  \left(
      \begin{array}{cccccc}
	 & & &  & 0 & a_1 \\
	 & & \text{\Large $T^{(0)}_{d-1,d-1}$} & & \vdots & \vdots \\
	 & & &  & 0  & a_{d-1} \\
	 t_{d1}     & t_{d 2}     & \ldots & t_{d \hspace{0.05cm} d-1}   & y  & a_d \\
	 t_{d+1 \hspace{0.05cm} 1} & t_{d+1 \hspace{0.05cm} 2} & \ldots & t_{d+1 \hspace{0.05cm} d-1} &  -x & a_{d+1} \\
      \end{array}
    \right),
\end{equation} 
where $T^{(0)}_{d-1,d-1}$ is the submatrix of $T^{(0)}$ on the first $d-1$ rows and columns, and $t_{ij}\in \Z$, $i=d,d+1$ and $1\leq j \leq d-1$,
are parameters to be chosen below. To complete the induction step, and hence the proof of the entire claim,
it remains to show that there is a choice of $t_{ij}$ ensuring $\det T =1$.
Expanding the determinant of $T$ with respect to its last two rows (using standard extension of the Laplace expansion) we get
\begin{equation}\label{T-matrix-exp}
\mathrm{det} T = \sum_{1\leq i< j\leq d+1} \e_{ij} \mathrm{det} M_{ij}  \mathrm{det} \widetilde{M}_{ij},
\end{equation}
where $\e_{ij} =(-1)^{d+(d+1)+i+j} $,
$M_{ij}$ is the $2 \times 2$ submatrix of $T$ from elements on the rows $d, d+1$ and columns $i,j$,
and $\widetilde{M}_{ij}$ is the $(d-1)\times (d-1)$ submatrix complementary to $M_{ij}$.
Due to construction, $\widetilde{M}_{ij}$ contains a zero-column unless $i$ or $j$ in 
(\ref{T-matrix-exp}) equals $d$. Hence,
(\ref{T-matrix-exp}) reduces to 
\begin{equation}\label{T-matrix-exp-revised}
\det T = \sum_{1\leq i \leq d-1} \e_{i d} \det \left( \begin{array}{ll}
t_{d i} & y \\
t_{d+1 \hspace{0.05cm} i} & -x
   \end{array}
\right)  \det \widetilde{M}_{i \hspace{0.05cm} d}  + [a_d, a_{d+1}] \det \widetilde{M}_{d \hspace{0.05cm} d+1}
\end{equation}
For each $1\leq i \leq d-1$ we choose $t_{di}, \ t_{d+1 \hspace{0.05cm} i} \in \Z$ in order to have
$ \e_{id } ( -t_{d i } x - y t_{d+1 \hspace{0.05cm} i}  ) =(-1)^{i+d}  T^{(0)}_{di}  $,
where $T^{(0)}_{di}$ is the $(d,i)$-th element of $T^{(0)}$. The existence of this choice follows from equality $\Z=  x \Z + y \Z$,
which on its turn follows directly by Euclid's algorithm and the fact that $x,y$ are coprime.
Now with this choice of parameters $t_{ij}$, (\ref{T-matrix-exp-revised}) coincides with the expansion
of $\det T^{(0)}$ with respect to its last row, and hence equals 1. The proof of the claim is complete.
\end{proof}

\vspace{0.2cm}

\noindent \footnotesize{\textbf{Acknowledgements.}
This note is partially based on my PhD thesis completed at
The University of Edinburgh in 2015, and I would like to thank my thesis advisor
Dr. Aram Karakhanyan for a number of stimulating and useful discussions.
I also thank Prof. Henrik Shahgholian for valuable comments regarding the manuscript.
Part of the work was conducted while I was visiting Institut Mittag-Leffler during the term
``Homogenization and Random Phenomenon". I thank The Institute for its warm hospitality and support.
I am grateful to the anonymous referee whose detailed comments and remarks  helped me to improve the exposition
of the paper significantly.
This article is finalized at KTH, where I am supported by postdoctoral fellowship from Knut and Alice Wallenberg Foundation.}

\end{document}